\documentclass[12pt]{amsart}
\pdfoutput=1
\usepackage{fullpage}
\usepackage{amssymb}
\usepackage{amstext}
\usepackage{amsmath}
\usepackage{amsthm}
\usepackage{stmaryrd}
\usepackage{verbatim}
\usepackage{booktabs}
\usepackage{array}
\usepackage{float}
\usepackage{hyperref}
\newtheorem{theorem}{Theorem}
\newtheorem{corollary}[theorem]{Corollary}
\newtheorem{proposition}[theorem]{Proposition}
\newtheorem{lemma}[theorem]{Lemma}
\theoremstyle{remark}

\newcommand{\SL}{\operatorname{SL}}
\newcommand{\D}{\operatorname{D}}
\newcommand{\spt}{\operatorname{spt}}
\newcommand{\hol}{\operatorname{hol}}
\newcommand{\ord}{\operatorname{ord}}
\begin{document}
\title{Explicit constructions for Ramanujan-type congruences}
\author{Wei Wang}
\address{Department of Mathematics, Shaoxing University, Shaoxing 312000, China}
\email{weiwang\_math@163.com}

\begin{abstract}
For an integer-valued sequence $\{a(n)\}_{n\geq 0}$ and a prime $\ell$, a Ramanujan-type congruence is a relation of the form $a(\ell n-\delta_{\ell})\equiv 0\pmod\ell$, where $\delta_{\ell}$ is a specific shift. In this paper, we present explicit constructions of modular forms to establish Ramanujan-type congruences for a broad class of generating functions, including eta-quotients, weakly holomorphic modular forms, and mock modular forms. As applications, our explicit approach provides a unified framework to not only recover known congruences but also establish new non-congruence results and explicit congruences for various combinatorial and arithmetic functions.
\end{abstract}
\keywords{Ramanujan-type congruences, Rankin-Cohen brackets, Mock modular forms, Eta-quotients, Partition}
\subjclass[2020]{11F33, 11F37, 11P83}
\thanks{ }
\maketitle
\section{Introduction}
Ramanujan found the following famous congruences for the partition function $p(n)$:
\begin{align*}
&p\left(5n+4\right)\equiv 0\pmod 5,\\
&p(7n+5)\equiv 0\pmod 7,\\
&p(11n+6)\equiv 0\pmod {11}.
\end{align*}
Since the discovery of these congruences, numerous approaches have been developed to establish them. For instance, Ramanujan proved the congruence modulo 5 by employing the following identity:
\[
\sum_{n\geq 0}p(5n+4)q^{5n}=5\frac{(q^5;q^5)_{\infty}^5}{(q;q)^6_{\infty}},
\]
where $(q;q)_{\infty}:=\prod_{n\geq 1}(1-q^n)$ denotes the $q$-Pochhammer symbol. Another key insight lies in establishing the connection between these congruences and modular forms.  For prime $5 \leq \ell\leq 23$, Ramanujan claimed that the $q$-series
\begin{equation}\label{eq_Ram_par}
(q^{\ell};q^{\ell})_{\infty}\sum_{n\geq 0}p\left(\ell n-\delta_{\ell}\right)q^n,\quad\text{where }\delta_{\ell}:=(\ell^2-1)/24,
\end{equation}
is congruent to a modular form of weight $\ell-1$ on $\SL_2(\mathbb{Z})$, see \cite[Section 15]{zbMATH01301076}.  Note that when $\ell=5,7,11$, this result immediately yields Ramanujan's congruences, as the corresponding spaces of cusp forms are trivial.
Chua \cite{zbMATH02051572} proved this for any prime $\ell\geq 5$, and Ahlgren and Boylan \cite{zbMATH02001026} extended this result to arbitrary powers of primes. A crucial point is that the generating function of the partition function 
\[
\sum_{n\geq 0}p(n)q^{n-1/24}=q^{-1/24}\frac{1}{(q;q)_{\infty}}=\frac{1}{\eta(\tau)},\quad q=e^{2\pi i \tau},
\]
is a weakly holomorphic modular form. The starting point of their proofs is to consider the weakly holomorphic modular form 
\[
\frac{\eta^{\ell}(\ell \tau)}{\eta(\tau)}\in M^{!}_{\ell-1}(\ell),
\]
which is congruent to a power of the $\Delta$ function modulo $\ell$:
\begin{equation}\label{eq_con_par}
\frac{\eta^{\ell}(\ell \tau)}{\eta(\tau)}\equiv \Delta(\tau)^{\frac{\ell^2-1}{24}} \pmod {\ell}.
\end{equation}
Applying the $U_{\ell}$ operator defined by
\[
\left(\sum a(n)q^n\right)\big|U_{\ell}:=\sum a(\ell n)q^n
\]
to both sides of \eqref{eq_con_par} and utilizing the fundamental properties of the $U_{\ell}$ operator:
\begin{equation}\label{eq_propertyUl}
\left(\sum a(n)q^{n}\sum b(n)q^{\ell n}\right)\big|U_{\ell}:=\left(\sum a(n)q^n\right)\big|U_{\ell}\sum b(n)q^{n},
\end{equation}
yields
\[
(q^{\ell},q^{\ell})_{\infty}\sum_{n\geq 0}p(\ell n-\delta_{\ell})q^n\equiv \left(\Delta(\tau)^{\frac{\ell^2-1}{24}} \right)\big|U_{\ell}\pmod\ell.
\]
By employing Serre's filtration \cite[Proposition 3.3]{zbMATH06011365} or the trace operator \cite{zbMATH02051572}, it can be shown that the right-hand side of the above equation is congruent to a modular form of weight $\ell-1$, and the proof is complete.

Whether relying on Serre's filtration or the trace operator, the aforementioned approaches provide merely an existence proof and fail to explicitly construct the specific modular form of weight $\ell-1$. In this paper, we constructively provide an explicit expression for this modular form by utilizing a specific bracket operator on the space of modular forms, namely the Rankin-Cohen bracket. For functions $f$ and $g$ defined on the upper half-plane and $a,b\in \frac{1}{2}\mathbb{Z}$, it is defined as
\begin{equation}\label{eq_RC_def}
[f,g]_{v}:=\sum_{0\leq j\leq v} (-1)^j \binom{a+v-1}{v-j} \binom{b+v-1}{j}\D^jf\D^{v-j}g.
\end{equation}
Recently, Bringmann, Craig and Ono \cite{zbMATH08123433} gave a closed form expression of the generating function \eqref{eq_Ram_par} using the Hecke traces of $\ell$-ramified values of special Dirichlet series for weight $\ell-1$ cusp forms. Their approach, in essence, utilizes the Rankin-Cohen bracket.

Throughout this paper, for any sequence $a(n)$,  we adopt the convention that $a(n)=0$ for $n<0$. We classify our results into three distinct types. Type I addresses eta-quotients of weight $-1/2$, with the ordinary partition function serving as the most fundamental example.
\begin{theorem}[\textbf{Type I}]\label{thm_main1}
Given an eta-quotient
\[
f(\tau)=\sum_{n\geq 0}a(n)q^{n-\frac{t}{w}}=\prod_{\delta\mid N}\eta(\delta \tau)^{-r_{\delta}},\quad \frac{t}{w}:=\frac{\sum_{\delta\mid N}\delta r_{\delta}}{24}.
\] 
We make the following assumptions:
\begin{enumerate}
\item There is no common factor among the $\delta$ for which $r_{\delta}$ is non-zero, that is, we assume that $f$ is a primitive eta-quotient.

\item $g:=1/f$ is a holomorphic modular form of weight $1/2$, level $N$, that is, for any $\gamma=\begin{pmatrix}
a& b\\
c& d
\end{pmatrix}\in \Gamma_0(N)$, there is a  multiplier $\nu$ such that
\[
g\left(\frac{a\tau+b}{c\tau+d}\right)=\nu(\gamma)(c\tau+d)^{1/2}g(\tau).
\]
\item The Fourier expansion of $g(\tau)$ is given by $\sum_{m\in\mathbb{Z}}b(m)q^{\frac{m^2}{w}}$, and satisfies $b(\ell m)=\chi(\ell)b(m)$ for every $m$.
\end{enumerate}
For every prime $\ell\geq 5$, we have
\[
\chi(\ell)\cdot\prod_{\delta\mid N}(q^{\ell\delta};q^{\ell\delta})^{r_{\delta}}_{\infty}\sum_{n\geq 0}a(\ell n-\tfrac{t}{w}(\ell^2-1))q^n\equiv 1-\left(\frac{-w}{\ell}\right)\frac{8}{3}\cdot[f(\tau),g(\tau)]_{(\ell-1)/2}\big|U_{\ell}\pmod{\ell} .
\]
and $[f(\tau),g(\tau)]_{(\ell-1)/2}$ is a holomorphic modular form in $M_{\ell-1}(\Gamma_0(N))$.
\end{theorem}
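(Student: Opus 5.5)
The plan is to compute $[f,g]_{(\ell-1)/2}\big|U_{\ell}$ directly, coefficient by coefficient, modulo $\ell$. Throughout, $v:=(\ell-1)/2$, and the weights are $a=-1/2$ for $f$ and $b=1/2$ for $g$, so the bracket has weight $\ell-1$.

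\emph{Modularity and holomorphy.} Modularity of $[f,g]_v$ on $\Gamma_0(N)$ with trivial multiplier is the standard Rankin--Cohen transformation law. The multipliers of $f=1/g$ and $g$ cancel, because every term $\D^jf\,\D^{v-j}g$ carries the product $\nu^{-1}\nu$. Holomorphy on $\mathbb{H}$ is clear, since eta-quotients have no zeros or poles there. At a cusp $\mathfrak{a}$, let $g$ have order $\alpha\geq 0$, so $f$ has order $-\alpha$. Each term $\D^jf\,\D^{v-j}g$ then has order at least $-\alpha+\alpha=0$, so $[f,g]_v$ is holomorphic at every cusp. Hence $[f,g]_v\in M_{\ell-1}(\Gamma_0(N))$.

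\emph{The congruence.} Expand the product $\D^jf\cdot\D^{v-j}g$ using the expansions $f=\sum a(n)q^{n-t/w}$ and $g=\sum b(m)q^{m^2/w}$. A pair $(n,m)$ contributes
\[
a(n)b(m)\,\bigl(n-\tfrac{t}{w}\bigr)^j\bigl(\tfrac{m^2}{w}\bigr)^{v-j}\,q^{\,n-t/w+m^2/w}.
\]
Since $fg=1$, all total exponents are integers. Applying $U_\ell$ keeps only the terms with $n-t/w+m^2/w\equiv 0\pmod\ell$, and for these $n-\tfrac{t}{w}\equiv-\tfrac{m^2}{w}$ as $\ell$-adic integers (note $\ell\nmid w$, since $w\mid 24$). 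Therefore each surviving term, weighted by $(-1)^j\binom{a+v-1}{v-j}\binom{b+v-1}{j}$, becomes $(m^2/w)^v$ times the unsigned binomial product. Summing over $j$ with Chu--Vandermonde gives the factor $\binom{a+b+2v-2}{v}=\binom{\ell-3}{v}$. This binomial coefficient, and the half-integral binomials in \eqref{eq_RC_def}, are $\ell$-integral because $1/2$ is an $\ell$-adic integer.

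Next, $\binom{\ell-3}{v}\equiv(-1)^v\binom{v+2}{2}=(-1)^v\frac{(\ell+3)(\ell+1)}{8}\equiv(-1)^v\frac38\pmod\ell$. By Euler's criterion, $(m^2/w)^v\equiv\left(\frac{w}{\ell}\right)$ if $\ell\nmid m$ and $\equiv 0$ if $\ell\mid m$. Together with $(-1)^v=\left(\frac{-1}{\ell}\right)$, this gives
\[
[f,g]_v\big|U_\ell\equiv \tfrac38\left(\tfrac{-w}{\ell}\right)\Bigl( (fg)\big|U_\ell-\bigl(f\cdot g^{(\ell)}\bigr)\big|U_\ell\Bigr)\pmod\ell,
\qquad g^{(\ell)}:=\sum_{\ell\mid m}b(m)q^{m^2/w}.
\]
Here $(fg)|U_\ell=1$. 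By hypothesis (3), $g^{(\ell)}=\sum_m b(\ell m)q^{\ell^2m^2/w}=\chi(\ell)\,g(\ell^2\tau)$. Now write $g(\ell^2\tau)=q^{\ell t/w}\cdot\bigl(q^{\ell t/w}\prod_\delta(q^{\ell^2\delta};q^{\ell^2\delta})_\infty^{r_\delta}\bigr)$, where the second factor is a series in $q^\ell$. Applying \eqref{eq_propertyUl} then yields
\[
\bigl(f\,g(\ell^2\tau)\bigr)\big|U_\ell=\prod_\delta(q^{\ell\delta};q^{\ell\delta})_\infty^{r_\delta}\sum_{n}a\bigl(\ell n-\tfrac{t}{w}(\ell^2-1)\bigr)q^n.
\]
Multiplying through by $\frac83\left(\frac{-w}{\ell}\right)$ and rearranging gives the stated congruence.

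\emph{Main obstacle.} The delicate step is the bookkeeping of the fractional exponents $t/w$ under $U_\ell$. One has to check that the shift $\frac{t}{w}(\ell^2-1)$ is an integer, which follows from $24\mid\ell^2-1$ and $24t/w=\sum_\delta\delta r_\delta\in\mathbb{Z}$. One must also verify that the congruence $n-t/w\equiv-m^2/w$ holds precisely as $\ell$-adic integers on the support of $U_\ell$. Compared with this, the binomial evaluation modulo $\ell$ is routine.
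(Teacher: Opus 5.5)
Your proposal is correct and takes essentially the same route as the paper. The mod-$\ell$ computation of $[f,g]_{v}\big|U_\ell$ via Chu--Vandermonde and Euler's criterion, followed by splitting off the terms with $\ell\mid m$ using $b(\ell m)=\chi(\ell)b(m)$, is Lemma~\ref{lemma_fgUl} specialized to $a=-1/2$; your cusp-order argument for holomorphy is Lemma~\ref{lemma_RC_valuation}.

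There is one harmless slip: your factorization of $g(\ell^2\tau)$ is wrong as written, since $q^{\ell t/w}\cdot q^{\ell t/w}\neq q^{\ell^2 t/w}$. The correct bookkeeping is $f(\tau)g(\ell^2\tau)=\bigl(\sum_n a(n)q^{n+(\ell^2-1)t/w}\bigr)\prod_\delta(q^{\ell^2\delta};q^{\ell^2\delta})_\infty^{r_\delta}$, where the first factor has integer exponents and the second is a series in $q^{\ell}$. Applying \eqref{eq_propertyUl} to this then gives exactly your final formula.
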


There are only finitely many pairs $(f,g)$ satisfying these conditions. Although Condition (1) is not explicitly utilized in the proof, it is included to circumvent the discussion of redundant generating functions. According to Mersmann's classification \cite{mersmann1991holomorphe} of primitive eta-quotients of weight $1/2$ (see also \cite[p.30]{zbMATH05808162}), there are only a few choices of $g$ satisfying conditions (1) and (2):
\begin{align*}
& \eta(\tau), \quad \frac{\eta(\tau)^2}{\eta(2\tau)}, \quad \frac{\eta(2\tau)^2}{\eta(\tau)}, \quad \frac{\eta(\tau)\eta(4\tau)}{\eta(2\tau)}, \quad \frac{\eta(2\tau)^3}{\eta(\tau)\eta(4\tau)}, \quad \frac{\eta(2\tau)^5}{\eta(\tau)^2\eta(4\tau)^2}, \\[1em]
& \frac{\eta(\tau)^2\eta(6\tau)}{\eta(2\tau)\eta(3\tau)}, \quad \frac{\eta(2\tau)^2\eta(3\tau)}{\eta(\tau)\eta(6\tau)}, \quad \frac{\eta(2\tau)\eta(3\tau)^2}{\eta(\tau)\eta(6\tau)}, \quad \frac{\eta(\tau)\eta(6\tau)^2}{\eta(2\tau)\eta(3\tau)}, \\[1em]
& \frac{\eta(\tau)\eta(4\tau)\eta(6\tau)^2}{\eta(2\tau)\eta(3\tau)\eta(12\tau)}, \quad \frac{\eta(2\tau)^2\eta(3\tau)\eta(12\tau)}{\eta(\tau)\eta(4\tau)\eta(6\tau)}, \quad \frac{\eta(2\tau)^5\eta(3\tau)\eta(12\tau)}{\eta(\tau)^2\eta(4\tau)^2\eta(6\tau)^2}, \\[1em]
& \frac{\eta(\tau)\eta(4\tau)\eta(6\tau)^5}{\eta(2\tau)^2\eta(3\tau)^2\eta(12\tau)^2} .
\end{align*}
All these eta-quotients satisfy the condition (3), see the appendix. For an explanation of the $q$-series in the appendix, we refer the reader to \cite[Chapter 8]{kohler2011eta}.

Our second result focuses on weakly holomorphic modular forms of weight $3/2$.
\begin{theorem}[\textbf{Type II}]\label{thm_main2}
Let $f(\tau)$ be a weakly holomorphic modular form of weight $3/2$ with Fourier expansion
\[
f(\tau)=\sum_{n\geq 0}a(n)q^{n-\frac{t}{w}},~a(n)\in\mathbb{Z}_{(\ell)}.
\] 
We make the following assumptions:
\begin{enumerate}
\item There is an eta-quotient $g(\tau)=\prod_{\delta\mid N}\eta(\delta\tau)^{r_{\delta}}$ of weight $1/2$ such that $fg$ is a holomorphic modular form in $M_2(\Gamma_0(N))$. 

\item The function $g(\tau)$ satisfies the conditions (2) in Theorem \ref{thm_main1}.

\item Let $s$ be an integer coprime to $w$. The Fourier expansion of $g(\tau)$ is given by $\sum_{m\in\mathbb{Z}}b(m)q^{\frac{sm^2}{w}}$, and satisfies $b(\ell m)=\chi(\ell)b(m)$ for every $m$.
\end{enumerate}
For every prime $\ell\geq 3$ with $\ell\nmid sw$, we have
\[
\chi(\ell)\cdot\prod_{\delta\mid N}(q^{\ell\delta};q^{\ell\delta})^{r_{\delta}}_{\infty}\sum_{n\geq 0}a(\ell n-\tfrac{t}{w}(\ell^2-1))q^n\equiv \left(fg-\left(\frac{-sw}{\ell}\right)\cdot[f(\tau),g(\tau)]_{(\ell-1)/2}\right)\big|U_{\ell}\pmod{\ell} .
\]
and $[f(\tau),g(\tau)]_{(\ell-1)/2}$ is a holomorphic modular form in $M_{\ell+1}(\Gamma_0(N))$. Moreover, if $fg$ is a cusp form, then $[f(\tau),g(\tau)]_{(\ell-1)/2}$ is also a cusp form in $S_{\ell+1}(\Gamma_0(N))$.
\end{theorem}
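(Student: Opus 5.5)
The plan is to reduce the Rankin--Cohen bracket modulo $\ell$ to a single term, and then compute the action of $U_{\ell}$ on that term using the quadratic shape of the exponents of $g$. Here $a=3/2$ is the weight of $f$, $b=1/2$ the weight of $g$, and $v=(\ell-1)/2$. Then $a+v-1=\ell/2$ and $b+v-1=(\ell-2)/2$ in \eqref{eq_RC_def}. For $1\le k\le \ell-1$, the numerator of $\binom{\ell/2}{k}$ contains the factor $\ell/2$, and $k!$ is prime to $\ell$. Hence $\binom{\ell/2}{k}\equiv 0\pmod{\ell}$ in $\mathbb{Z}_{(\ell)}$. Only the term $j=v$ survives, so
\[
[f,g]_{v}\equiv(-1)^v\binom{(\ell-2)/2}{v}\,g\,\D^v f\equiv(-1)^v\binom{-1}{v}\,g\,\D^v f= g\,\D^v f\pmod{\ell},
\]
using $(\ell-2)/2\equiv -1$ $\ell$-adically. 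All Fourier coefficients involved lie in $\mathbb{Z}_{(\ell)}$, because $a(n)\in\mathbb{Z}_{(\ell)}$ and $\ell\nmid w$.

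Next I compute $(g\,\D^v f)|U_{\ell}$ modulo $\ell$. A typical term of $g\,\D^v f$ is $b(m)a(k)(k-\tfrac{t}{w})^v q^{sm^2/w+k-t/w}$. Since $fg\in M_2(\Gamma_0(N))$, the total exponent is an integer. When it equals $\ell n$, we get $k-\tfrac tw\equiv -\tfrac{sm^2}{w}\pmod{\ell}$. By Euler's criterion,
\[
\left(k-\tfrac tw\right)^{(\ell-1)/2}\equiv\left(\tfrac{-sm^2}{w}\right)^{(\ell-1)/2}\equiv\left(\frac{-sw}{\ell}\right)\cdot\mathbf{1}_{\ell\nmid m}\pmod{\ell}.
\]
Write $g=g^{\ast}+g_{\ell}$, where $g_{\ell}:=\sum_{m}b(\ell m)q^{s\ell^2m^2/w}$ collects the terms with $\ell\mid m$. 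Condition (3) gives $g_{\ell}=\chi(\ell)g(\ell^2\tau)$. The computation above then yields
\[
[f,g]_{v}|U_{\ell}\equiv\left(\frac{-sw}{\ell}\right)\bigl(fg-\chi(\ell)f(\tau)g(\ell^2\tau)\bigr)\big|U_{\ell}\pmod{\ell}.
\]
Multiplying by $\left(\frac{-sw}{\ell}\right)$ and rearranging gives $\bigl(fg-(\frac{-sw}{\ell})[f,g]_v\bigr)|U_{\ell}\equiv\chi(\ell)\bigl(f(\tau)g(\ell^2\tau)\bigr)|U_{\ell}$.

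Finally I unwind the right-hand side. First, $g(\ell^2\tau)=q^{\ell^2t/w}\prod_{\delta}(q^{\ell^2\delta};q^{\ell^2\delta})_\infty^{r_\delta}$, where the identification of the leading exponent with $\ell^2 t/w$ uses that $fg$ has integral exponents and leading exponent compatible with $g$'s eta-quotient order. Second, the product is a series in $q^{\ell}$. By \eqref{eq_propertyUl}, the right-hand side therefore equals
\[
\chi(\ell)\prod_{\delta}(q^{\ell\delta};q^{\ell\delta})_\infty^{r_\delta}\cdot\Bigl(\sum_n a(n)q^{n+(\ell^2-1)t/w}\Bigr)\Big|U_{\ell}=\chi(\ell)\prod_{\delta}(q^{\ell\delta};q^{\ell\delta})_\infty^{r_\delta}\sum_n a\bigl(\ell n-\tfrac tw(\ell^2-1)\bigr)q^n.
\]
This is the claimed congruence. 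The one bookkeeping point needing care is the fractional exponents: I will check that $(\ell^2-1)t/w$ is integral, or else interpret $a$ at non-integers as $0$.

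For the modularity statement, I will use the standard equivariance of Rankin--Cohen brackets under a common multiplier system. The product of the multipliers of $f$ and $g$ is trivial, since $fg\in M_2(\Gamma_0(N))$. Hence $[f,g]_v$ transforms with weight $3/2+1/2+2v=\ell+1$ on $\Gamma_0(N)$ with trivial character. For holomorphy at each cusp, note that $\D$ does not lower the order of vanishing of a $q$-expansion in the local parameter. Hence each term $\D^jf\,\D^{v-j}g$ has order at least $\ord(f)+\ord(g)=\ord(fg)\ge 0$, and this is $>0$ when $fg$ is cuspidal. The main obstacle I expect is the step at the cusps. There one must verify that the slash action at a general cusp commutes with the bracket, and that the local expansions of $f$ and $g$ there have exponents for which this order argument applies. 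I would handle this by appealing to the covariance $[f|\gamma,g|\gamma]_v=[f,g]_v|\gamma$ for all $\gamma\in\SL_2(\mathbb{Z})$.
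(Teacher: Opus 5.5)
Your proposal is correct. Its skeleton matches the paper's: you reduce to $\chi(\ell)\bigl(f(\tau)g(\ell^2\tau)\bigr)|U_{\ell}$ by separating the terms of $g$ with $\ell\mid m$ from those with $\ell\nmid m$, unwind this via the eta-product form of $g(\ell^2\tau)$ and \eqref{eq_propertyUl}, and get holomorphy and cuspidality from covariance of the bracket plus the order estimate at each cusp. That last step is exactly Lemma~\ref{lemma_RC_valuation}.

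You differ from the paper in the key computation. The paper proves Lemma~\ref{lemma_fgUl} for an arbitrary weight $a$. It expands the whole bracket and applies $U_\ell$ first. It then uses $(sm^2-\ell nw)^j\equiv (sm^2)^j$, so every $j$ contributes the same factor $(s/w)^v m^{\ell-1}$. Finally it sums the binomial coefficients by Vandermonde to get $c_a\equiv\binom{a-5/2}{(\ell-1)/2}$, which is $(-1)^{(\ell-1)/2}$ at $a=3/2$.

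You instead use that $a+v-1=\ell/2$ at $a=3/2$. So the bracket already collapses modulo $\ell$ to $g\,\D^v f$ before $U_\ell$ is applied, and Euler's criterion on $(k-t/w)^v\equiv(-sm^2/w)^v$ finishes. Your route is shorter and gives the stronger intermediate congruence $[f,g]_v\equiv g\,\D^v f\pmod{\ell}$. The paper only observes this later, in the proof of Corollary~\ref{coro_type2}. However, your route is specific to weight $3/2$. The paper's Vandermonde lemma is uniform in $a$, so the same computation also covers Type~I. There $a=-1/2$, $\binom{\ell/2-2}{v-j}\equiv\binom{-2}{v-j}$ does not vanish, and no collapse occurs.

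One bookkeeping correction. Integrality of the exponents of $fg$ only gives $\ord_\infty(g)\equiv t/w\pmod 1$. It does not give $g(\ell^2\tau)=q^{\ell^2 t/w}\prod_\delta(q^{\ell^2\delta};q^{\ell^2\delta})_\infty^{r_\delta}$. If $\ord_\infty(g)=t/w+k$ with $k\neq 0$, the argument of $a$ in your final formula shifts by $-\ell^2 k$. The stated congruence needs the normalization $t/w=\frac{1}{24}\sum_{\delta}\delta r_\delta$, as in Theorem~\ref{thm_main1}, and the paper also uses this tacitly. State it as part of the setup rather than deriving it from integrality.
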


Type III deals with mock modular forms of weight $3/2$. Mock modular forms are a  significant class of $q$-series; however, they do not possess modular transformation properties on their own. Adding a suitable non-holomorphic part transforms them into harmonic weak Maass forms that satisfy modular transformations 
\[
F(\gamma\tau)=\nu(\gamma)\,(c\tau+d)^k F(\tau)
\text{ for every }
\gamma=\begin{pmatrix}a&b\\ c&d\end{pmatrix}\in\Gamma.
\] 
The harmonic weak Maass form $F(\tau)$ has the Fourier expansion of the form
\[
F(\tau)=f^{+}(\tau)+f^-(\tau),
\]
where
\[
f^+(\tau)=\sum_{n\gg -\infty} c_f^+(n)\,q^{n+\alpha},
\]
is the generating function we are interested in, and
\[
f^-(\tau)
=
(4\pi)^{1-k}
\left(
\frac{c_f^-(0)}{k-1}y^{1-k}
+
\sum_{m<0}
c_f^-(m)\,|m+\alpha|^{k-1}
\Gamma\!\left(1-k,4\pi|m+\alpha|y\right)q^{m+\alpha}
\right).
\]
is its non-holomorphic part. The function
\[
\xi_kF(\tau)
:=\overline{c_f^-(0)}+\sum_{n<0}\overline{c_f^-(n)}q^{-n-\alpha}.
\]
is called the shadow of $f^+(\tau)$. If $F(\tau)$ is a harmonic weak Maass form of weight $2-k$ and multiplier $\nu$, by the Bruinier–Funke theorem \cite[Proposition 3.2]{zbMATH02135351}, its shadow $(\xi_{2-k}F)(\tau)$ is a weakly holomorphic modular form of weight $k$ and multiplier $\overline{\nu}$.
\begin{theorem}[\textbf{Type III}]\label{thm_main3}
Let $f(\tau)$ be a mock modular form of weight $3/2$ with Fourier expansion
\[
f(\tau)=\sum_{n\geq 0}a(n)q^{n-\frac{t}{w}},~a(n)\in\mathbb{Z}_{(\ell)},
\]
which means that $f(\tau)$ is the holomorphic part of a  harmonic weak Maass form $F(\tau)$ of level $N$.
We make the following assumptions:
\begin{enumerate}
\item The shadow of $F(\tau)$ is a holomorphic modular form of weight $1/2$ whose Fourier coefficients are $\ell$-integral. By the Serre-Stark basis theorem, it is a linear combination of unary theta functions $\sum_{n\in\mathbb{Z}}\psi(n)q^{un^2/w}$ for some positive integers $u$ and a character (or function) $\psi$. We assume that this linear combination is in $\mathbb{Z}_{(\ell)}$.

\item There is a function $g(\tau)$ such that $Fg$ behaves like a modular form of weight $2$, that is, for any $\gamma=\begin{pmatrix}
a& b\\
c& d
\end{pmatrix}\in \Gamma_0(N)$, 
\[
(Fg)\left(\frac{a\tau+b}{c\tau+d}\right)=(c\tau+d)^{2}(Fg)(\tau),
\]
and $Fg$ has no principal parts at any cusp of $\Gamma_0(N)$.

\item The function $g(\tau)$ is an eta-quotient $g(\tau)=\prod_{\delta\mid N}\eta(\delta\tau)^{r_{\delta}}$ and satisfies the condition (2) in Theorem \ref{thm_main1}.

\item Let $s$ be an integer coprime to $w$.  The Fourier expansion of $g(\tau)$ is given by $\sum_{m\in\mathbb{Z}}b(m)q^{\frac{sm^2}{w}}$, and satisfies $b(\ell m)=\chi(\ell)b(m)$ for every $m$.  For any  $u$ in condition (1),  we need $\left(\frac{su}{\ell}\right)=1$.

\end{enumerate}
For every prime $\ell\geq 3$ with $\ell\nmid sw$, the $q$-series
\[
\prod_{\delta\mid N}(q^{\ell\delta};q^{\ell\delta})^{r_{\delta}}_{\infty}\sum_{n\geq 0}a(\ell n-\tfrac{t}{w}(\ell^2-1))q^n
\]
is congruent modulo $\ell$ to a modular form in $M_{\ell+1}(\Gamma_0(N))$.
\end{theorem}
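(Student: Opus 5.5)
The plan is to follow the proof of Theorem \ref{thm_main2}, with the harmonic weak Maass form $F=f^{+}+f^{-}$ in place of $f$, and to show that the non-holomorphic part contributes nothing modulo $\ell$ once the Rankin--Cohen bracket is formed and $U_\ell$ is applied.

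\emph{Step 1 (the bracket).} Form $[F,g]_{(\ell-1)/2}$ with $a=3/2$, $b=1/2$. Rankin--Cohen brackets are equivariant for the slash action even when $F$ is only real-analytic, because the non-holomorphic correction terms of $\D$ cancel in the bracket. Hence, by condition (2), $[F,g]_{(\ell-1)/2}$ transforms like a weight $\ell+1$ form on $\Gamma_0(N)$ with trivial multiplier.

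\emph{Step 2 (the holomorphic part).} Apply the same computation as in Type II to the holomorphic part $f=f^{+}$. Write $g=\sum b(m)q^{sm^2/w}$ and expand $\D^j f\,\D^{v-j} g$ with $v=(\ell-1)/2$. Modulo $\ell$, the binomial sum collapses by a Vandermonde/Legendre-symbol identity. This gives
\[
\chi(\ell)\prod_{\delta\mid N}(q^{\ell\delta};q^{\ell\delta})^{r_\delta}_\infty\sum_{n\geq0} a\bigl(\ell n-\tfrac tw(\ell^2-1)\bigr)q^n\equiv\Bigl(fg-\Bigl(\tfrac{-sw}{\ell}\Bigr)[f,g]_{(\ell-1)/2}\Bigr)\big|U_\ell \pmod{\ell}.
\]
Here $fg|U_\ell$ has integral coefficients but is not itself modular, since $fg$ is only mock. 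I therefore rewrite the right-hand side as $H|U_\ell$, where $H:=fg-\left(\frac{-sw}{\ell}\right)[f,g]_v$. Using Fermat ($\D^{\ell-1}\equiv$ projection onto exponents prime to $\ell$), the combination $H$ should be congruent mod $\ell$ to $(\text{const})\cdot[F,g]_v$ plus terms killed by $U_\ell$. So I need $H|U_\ell$, and the analogous expression built from $F$, to define a weight $\ell+1$ modular form mod $\ell$.

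\emph{Step 3 (the non-holomorphic part; main obstacle).} Show that the non-holomorphic contribution $[f^{-},g]_v|U_\ell$, together with $f^-g|U_\ell$, vanishes modulo $\ell$. By condition (1), $f^-$ is a combination of incomplete-gamma terms attached to $q^{-un^2/w}$, so the products $f^{-}g$ involve exponents $\frac{sm^2-un^2}{w}$. Applying $U_\ell$ picks out the $\ell$-divisible ones, and the goal is that their coefficients are $\equiv0$. The hypothesis $\left(\frac{su}{\ell}\right)=1$ is exactly what makes $sm^2\equiv un^2\pmod\ell$ solvable with $\ell\nmid mn$. This is where the holomorphic and non-holomorphic contributions pair off: terms with $\ell\mid m$ and $\ell\mid n$ reduce via $b(\ell m)=\chi(\ell)b(m)$, and the remaining terms should cancel in the signed bracket combination because $\D$ acts on $q^{sm^2/w}$ and $q^{-un^2/w}$ by the scalars $sm^2/w$ and $-un^2/w$, which are congruent mod $\ell$ on that locus.

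I expect the real-analytic derivatives of the incomplete gamma factors to be the hardest part. I would first try to avoid them altogether by working with the holomorphic projection, or equivalently by replacing $F$ with $\xi$-preimage identities: apply $\xi$ or the lowering operator to the bracket and check that it lands in a form divisible by $\ell$. Concretely, I would show that $\xi_{\ell+1}$ of the bracket (for a nonholomorphic weight $\ell+1$ object) produces $\overline{[\xi F, \cdot]}$-type terms carrying a factor $\binom{a+v-1}{v}\equiv0\pmod\ell$ or a factor coming from the Legendre cancellation. That would make the bracket holomorphic modulo $\ell$ after $U_\ell$.

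\emph{Step 4 (holomorphy at the cusps).} Since $Fg$ has no principal parts, the result is a holomorphic modular form of weight $\ell+1$ on $\Gamma_0(N)$ after $U_\ell$. Here I use that $U_\ell$ preserves $M_{\ell+1}(\Gamma_0(N))$ mod $\ell$ when $\ell\nmid N$, or otherwise the trace argument. Finally, $\chi(\ell)=\pm1$ can be absorbed into the form.
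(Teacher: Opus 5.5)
Your Steps 1 and 2 are sound. Step 2 is just Lemma~\ref{lemma_fgUl}, a formal $q$-series identity that does not need $f$ to be modular. But Step 3, which you rightly identify as the crux, contains no working mechanism, so Step 4 has nothing to act on.

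Reducing $f^-g|U_\ell$ and $[f^-,g]_v|U_\ell$ ``modulo $\ell$'' is not meaningful. These are real-analytic functions built from incomplete gamma functions and powers of $y$. Moreover, $\D$ does not act on the non-holomorphic terms attached to $q^{-un^2/w}$ by the scalar $-un^2/w$, so the proposed pairing on the locus $sm^2\equiv un^2\pmod\ell$ has no basis. The heuristic also points the wrong way: solvability of $sm^2\equiv un^2$ with $\ell\nmid mn$ makes \emph{more} terms survive $U_\ell$, not fewer.

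The $\xi$ route has the same problem. Grant that every term of $\xi_{\ell+1}([F,g]_v)$ carries a rational factor divisible by $\ell$ (this can be checked from $\binom{\ell/2}{v-j}$ and $(3/2)(5/2)\cdots(\ell/2)$). That still produces no holomorphic modular form congruent to $H=fg-\left(\frac{-sw}{\ell}\right)[f,g]_v$. ``Holomorphic modulo $\ell$'' is not something you can conclude about a non-harmonic real-analytic form. Likewise, your claim that $H$ is a constant times $[F,g]_v$ up to terms killed by $U_\ell$ compares a holomorphic $q$-series with a non-holomorphic function. Finally, nothing in your plan handles the weight-$2$ piece $fg$, whose natural modular replacement is only quasimodular.

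The missing idea is holomorphic projection with explicit correction terms, together with the observation that these corrections \emph{cancel} between weights $2$ and $\ell+1$ rather than vanish. By Mertens' theorem,
$\pi_{\hol}([F,g]_v)=[f,g]_v+(4w)^{-v}\binom{2v}{v}L_v^{f,g}$,
where $L_v^{f,g}$ is an $\ell$-integral combination of the series $\Lambda^{\psi}_{s,u}(\tau;v)$, whose coefficients are built from $(\sqrt{s}m-\sqrt{u}n)^{2v+1}$.
\begin{itemize}
\item For $v\ge1$ this is a holomorphic form in $M_{2v+2}(\Gamma_0(N))$.
\item For $v=0$ it reads $\pi_{\hol}(Fg)=fg+L_0^{f,g}$, a weight-$2$ quasimodular form.
\item For $2v+1=\ell$, Frobenius gives $(\sqrt{s}m-\sqrt{u}n)^{\ell}\equiv\left(\frac{s}{\ell}\right)\sqrt{s}m-\left(\frac{u}{\ell}\right)\sqrt{u}n$ for \emph{all} $m,n$, not just on a locus.
\item Hence $\left(\frac{su}{\ell}\right)=1$ yields $\left(\frac{s}{\ell}\right)\Lambda^{\psi}_{s,u}(\tau;v)\equiv\Lambda^{\psi}_{s,u}(\tau;0)$. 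This is the real role of that hypothesis.
\item Combined with $(4w)^{-v}\binom{2v}{v}\equiv\left(\frac{-w}{\ell}\right)$, this gives $H\equiv\pi_{\hol}(Fg)-\left(\frac{-sw}{\ell}\right)\pi_{\hol}([F,g]_v)\pmod\ell$.
\end{itemize}
Next write $\pi_{\hol}(Fg)$ as a modular form plus a multiple of $E_2$. Using $E_2\equiv E_{\ell+1}$ and $E_{\ell-1}\equiv 1\pmod\ell$, $H$ itself is congruent to a form in $M_{\ell+1}(\Gamma_0(N))$. Only then do Lemma~\ref{lemma_fgUl} and the stability of $M_{\ell+1}(\Gamma_0(N))$ under $U_\ell$ modulo $\ell$ finish the proof.
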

By the Bruinier–Funke theorem, the shadow $(\xi_{1/2}F)(\tau)\in M_{1/2}^{!}(\Gamma_0(N),\overline{\nu})$. In many cases, we choose $g$ to be the shadow of $f$, which naturally satisfies the condition (2).  Although the results in Type III appear to be merely an existence argument, according to its proof, we have actually constructed an explicit modular form. Furthermore, the expansions of this modular form at different cusps are computable, and its Eisenstein part can be explicitly obtained.

Given a congruence subgroup $\Gamma_0(N)$, the space of modular forms has a decomposition: 
\[
M_{k}(\Gamma_0(N))=\mathcal{E}_k(\Gamma_0(N))\oplus S_k(\Gamma_0(N))
\]
where $\mathcal{E}_k(\Gamma_0(N))$ and $S_k(\Gamma_0(N))$ denotes the $\mathbb{C}$-space of Eisenstein series and  cusp forms, respectively. Since $E_{\ell-1}\equiv 1\pmod{\ell}$,  our theorems imply that the $q$-series
\[
\prod_{\delta\mid N}(q^{\ell\delta};q^{\ell\delta})^{r_{\delta}}_{\infty}\sum_{n\geq 0}a(\ell n-\tfrac{t}{w}(\ell^2-1))q^n
\]
is congruent modulo $\ell$ to an explicit function. Our explicit construction has an advantage: because the values of the modular form $[f(\tau),g(\tau)]_{(\ell-1)/2}$ (in Type I and Type II) at each cusp can be easily computed, we can obtain its Eisenstein part explicitly. This is very helpful for determining its congruence in many cases. 

In the case of Type I, we illustrate the power of this explicit construction using the partition function as an example. Suppose that for a given prime $\ell\geq 5$, there exists a Ramanujan-type congruence $\sum_{n\geq 0}p(\ell n-\delta_{\ell})q^n\equiv 0\pmod{\ell}$. This implies that, modulo $\ell$, the cusp form
\[
f_{\ell}:=E_{\ell-1}-\left(\frac{-w}{\ell}\right)\frac{8}{3}\cdot [1/\eta,\eta]_{(\ell-1)/2}
\] 
lies in the kernel of $U_{\ell}$. By a result of Jochnowitz \cite[Corollary 7.7]{zbMATH03851200}, this kernel is trivial, which forces $f_{\ell}\equiv 0\pmod{\ell}$. However, since our construction is explicit, by computing its Fourier coefficients, we find that this can only occur when $\ell\in\{5,7,11\}$. Thus, we provide a new proof of Ahlgren and Boylan's theorem \cite{zbMATH02001026}:  
\begin{corollary}\label{coro_inconforpar}
For any prime $\ell\geq 13$,
\[
\sum_{n\geq 0}p(\ell n-\delta_{\ell})q^n\not\equiv 0\pmod{\ell}.
\]
\end{corollary}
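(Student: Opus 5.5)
We argue by contradiction, using Theorem~\ref{thm_main1} with $f=1/\eta$, $g=\eta$, $N=1$, $w=24$. Euler's pentagonal theorem gives $\eta(\tau)=\sum_{m}\left(\frac{12}{m}\right)q^{m^2/24}$, so condition (3) holds with $\chi(\ell)=\left(\frac{12}{\ell}\right)$. Theorem~\ref{thm_main1} then says that, modulo $\ell$,
\[
\chi(\ell)(q^{\ell};q^{\ell})_\infty\sum_{n\geq 0}p(\ell n-\delta_\ell)q^n\equiv \Big(1-\left(\tfrac{-24}{\ell}\right)\tfrac{8}{3}[1/\eta,\eta]_{(\ell-1)/2}\Big)\big|U_\ell ,
\]
where $[1/\eta,\eta]_{(\ell-1)/2}\in M_{\ell-1}(\SL_2(\mathbb{Z}))$. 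Since $E_{\ell-1}\equiv 1\pmod\ell$, we may replace the constant $1$ by $E_{\ell-1}$. Then $f_\ell:=E_{\ell-1}-\left(\frac{-24}{\ell}\right)\frac{8}{3}[1/\eta,\eta]_{(\ell-1)/2}$ is a genuine level one modular form of weight $\ell-1$ with $\ell$-integral coefficients, because $3\nmid$ the denominators for $\ell\ge5$.

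First I check that $f_\ell$ is a cusp form, that is, that its constant term vanishes. The constant term of $[f,g]_v$ comes only from $j=v$, where $\D^{0}g=\eta$ contributes $q^{1/24}$ and $\D^v(1/\eta)$ contributes $(-1/24)^v q^{-1/24}$. With $a=-1/2$, $b=1/2$, $v=(\ell-1)/2$, this gives the constant term $(-1)^v\binom{\ell/2-1}{v}\cdot 1\cdot(-1/24)^v$. I will compute it exactly and check that it equals $\left(\frac{-24}{\ell}\right)\frac{3}{8}$ modulo $\ell$; this must hold because the left side of the congruence has constant term divisible by $\ell$ only when $p(\ell\cdot 0-\delta_\ell)$ vanishes. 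For an unconditional statement I would instead use exact arithmetic, or simply absorb the constant difference.

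Now suppose $\sum p(\ell n-\delta_\ell)q^n\equiv0\pmod\ell$. Then $f_\ell|U_\ell\equiv0$. The form $f_\ell$ has weight $\ell-1<\ell+1$, so by Jochnowitz's result quoted above, $U_\ell$ is injective modulo $\ell$ on $M_{\ell-1}$. This forces $f_\ell\equiv0\pmod\ell$. Hence the conclusion reduces to the following claim: for every prime $\ell\ge13$, some Fourier coefficient of $f_\ell$ is nonzero modulo $\ell$.

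The main obstacle is making this claim uniform in $\ell$, since no finite computation can settle infinitely many primes. My plan is to compute the $q^1$ coefficient of $f_\ell$ explicitly as a closed-form expression in $\ell$. It receives $-2(\ell-1)/B_{\ell-1}\cdot\sigma_{\ell-2}(1)$ from $E_{\ell-1}$, which is $\equiv 0$ modulo $\ell$ by von Staudt--Clausen because $\ell\mid$ the denominator of $B_{\ell-1}$. It also receives a finite sum of binomial products from the bracket, using only the first two terms of $1/\eta$ and of $\eta$. The factors $\D^j$ produce powers of $23/24$ and $-1/24$, and these sums should collapse, via Vandermonde-type identities for $\binom{\ell/2-1}{v-j}\binom{\ell/2-1}{j}$ (note $\binom{-1/2+v-1}{\cdot}$ and $\binom{1/2+v-1}{\cdot}$), to something like $c\cdot((23/24)^{v}\pm(1/24)^{v})$ up to a simple factor. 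Evaluating this modulo $\ell$ with Euler's criterion reduces it to a Legendre-symbol expression, which vanishes precisely when $\ell\mid$ an explicit small integer; the expected divisors are $5,7,11$ in a quantity like $2^{?}3^{?}\cdot 5\cdot7\cdot11$. If the first coefficient does not separate cleanly, I would use the coefficient of $q^{m}$ for a small $m$ in the same way. Failing that, I would compare $f_\ell$ with $\Delta^{\delta_\ell}$ modulo $\ell$ via the filtration relation of \eqref{eq_con_par}.
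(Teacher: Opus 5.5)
Your reduction is the same as the paper's. You apply Theorem~\ref{thm_main1} to $(1/\eta,\eta)$, replace $1$ by $E_{\ell-1}$, and invoke Jochnowitz to force $f_\ell\equiv 0\pmod\ell$; it then remains to show that some coefficient of $f_\ell$ is an $\ell$-unit. That last step is the whole content of the corollary, and you leave it as a guess whose stated mechanism is wrong. Your constant-term computation already shows the problem: it is not true that only $j=v$ contributes. For $a=-1/2$, each product $\D^j(1/\eta)\,\D^{v-j}\eta$ has constant term $(-1/24)^j(1/24)^{v-j}\neq 0$. Modulo $\ell$ one has $\binom{v-3/2}{v-j}\equiv\binom{-2}{v-j}=(-1)^{v-j}(v-j+1)$, which does not vanish. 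The collapse to $j=v$ you have in mind happens in weight $3/2$, where $\binom{v+1/2}{v-j}=\binom{\ell/2}{v-j}\equiv 0$ for $j<v$. By Vandermonde the true constant term is $\binom{2v-2}{v}(1/24)^v\equiv\left(\frac{-24}{\ell}\right)\frac38$, while your single term gives $\left(\frac{-24}{\ell}\right)$. The factor $3/8$ you proposed to ``absorb'' is exactly the contribution of the terms you dropped.

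The same non-collapse governs the $q^1$ coefficient, which is where the proof actually lives. Using $\binom{v-1/2}{r}\equiv(-1)^r$ and $\binom{v-3/2}{v-r}\equiv(-1)^{v-r}(v-r+1)$, the $q^1$ coefficient of $[1/\eta,\eta]_v$ is congruent to $S(\alpha+1,\alpha)-S(\alpha,\alpha-1)$, where $S(X,Y)=\sum_{r=0}^{v}(v-r+1)X^rY^{v-r}$ and $\alpha=-1/24$. The identity $(Y-X)^2S(X,Y)=X^{v+2}-(v+2)XY^{v+1}+(v+1)Y^{v+2}$ then produces three bases: $\frac{1}{576}\bigl[529(23/24)^v+34(-1/24)^v-275(-25/24)^v\bigr]$. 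Only after using $25^v=5^{\ell-1}\equiv 1$ does this become $\frac{24^{-v}}{576}\bigl(529\cdot 23^v-241(-1)^v\bigr)$. What makes the exceptional set finite is that the two integer coefficients differ: $|529\mp 241|\in\{288,770\}$, whose prime factors $\ge 5$ are exactly $5,7,11$ (at $\ell=23$ the first term drops out and $241\not\equiv 0$). The shape you anticipate, $c\bigl((23/24)^v\pm(1/24)^v\bigr)$, would reduce modulo $\ell$ to a condition like $\left(\frac{23}{\ell}\right)=\mp 1$, which half of all primes satisfy, so it could not prove the corollary. Your fallbacks (trying another $q^m$, or a filtration comparison with $\Delta^{\delta_\ell}$) are not arguments. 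As written, the proposal establishes the reduction but not the statement.
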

We handle all the eta-quotients in Mersmann’s list and prove the following non-congruence result:
\begin{corollary}\label{coro_inconforTypeI}
For all eta-quotients in Type I, excluding $1/\eta(\tau)$ and $\eta(\tau)\eta(4\tau)/\eta(2\tau)^3$, there exist no Ramanujan-type congruences modulo $\ell$ for any prime $\ell\geq 5$. More precisely, if $t=0$,
\[
\sum_{n\geq 0}a(\ell n)q^n\not\equiv 1\pmod{\ell},
\]
and if $t\neq 0$,
\[
\sum_{n\geq 0}a(\ell n-\tfrac{t}{w}(\ell^2-1))q^n\not\equiv 0\pmod{\ell}.
\]
\end{corollary}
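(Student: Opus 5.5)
The plan is to repeat, for each eta-quotient $f$ in Mersmann's list with $g=1/f$, the argument sketched above for the partition function, using the explicit form supplied by Theorem \ref{thm_main1}. Put
\[
F_\ell:=E_{\ell-1}-\left(\frac{-w}{\ell}\right)\frac{8}{3}\,[f,g]_{(\ell-1)/2}\in M_{\ell-1}(\Gamma_0(N)).
\]
Because $E_{\ell-1}\equiv 1\pmod\ell$, Theorem \ref{thm_main1} gives
\[
\chi(\ell)\prod_{\delta\mid N}(q^{\ell\delta};q^{\ell\delta})_\infty^{r_\delta}\sum_{n\geq0}a\!\left(\ell n-\tfrac{t}{w}(\ell^2-1)\right)q^n\equiv F_\ell\big|U_\ell \pmod\ell .
\]
We argue by contradiction and assume a congruence of the forbidden shape: $\sum a(\ell n)q^n\equiv1$ when $t=0$, or $\sum a(\ell n-\cdots)q^n\equiv 0$ when $t\neq0$.

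\textbf{Case $t\neq 0$.} Here $F_\ell|U_\ell\equiv0$. Since $\ell\nmid N$, Jochnowitz's result (the level-$N$ analogue of \cite[Corollary 7.7]{zbMATH03851200}) says $U_\ell$ is injective modulo $\ell$ on $M_{\ell-1}(\Gamma_0(N))$, so $F_\ell\equiv0\pmod\ell$.

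\textbf{Case $t=0$.} Here $F_\ell|U_\ell\equiv\chi(\ell)\prod(q^{\ell\delta};q^{\ell\delta})^{r_\delta}_\infty\equiv \chi(\ell)\,g(\tau)^\ell$ reduced as a power series. This is congruent to $\chi(\ell)\,g^{\ell}$ with weight $\ell/2$; in practice one reduces it to the statement that $F_\ell-\chi(\ell)g^{?}$ lies in the kernel of $U_\ell$. The cleaner route is to compare constant terms and first coefficients directly.

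In both cases the result is an explicit congruence for a form whose $q$-expansion, and expansion at every cusp of $\Gamma_0(N)$, is computable from \eqref{eq_RC_def}. The next step is therefore to compute the constant terms of $F_\ell$ at each cusp. For the Rankin–Cohen bracket this is feasible. At a cusp the expansions of $f$ and $g$ begin with $q^{-h}$ and $q^{h}$ respectively, with $h$ the (width-normalised) order. Since $\D^j q^{\pm h}=(\pm h)^jq^{\pm h}$, the constant term of $[f,g]_v$ equals
\[
\sum_j(-1)^j\binom{v-3/2}{v-j}\binom{v-1/2}{j}(-h)^j h^{v-j}\cdot(\text{leading coefficients}).
\]
This is a closed hypergeometric-type expression, essentially $h^v$ times a binomial. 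Reducing it mod $\ell$ with $v=(\ell-1)/2$ shows that the constant term of $F_\ell$ at a cusp is $1-\epsilon\cdot c\,h^{(\ell-1)/2}\cdot(\text{unit})$. Here $h^{(\ell-1)/2}\equiv\left(\frac{h}{\ell}\right)$, so this value is determined by Legendre symbols, possibly together with the quadratic-character twist at each cusp. Then $F_\ell\equiv0$ forces these constant terms to vanish at every cusp simultaneously. For eta-quotients with $t=0$ we have $h=0$ at $\infty$, so we look at other cusps, or at the first nonconstant coefficient.

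The key step, and the expected main obstacle, is to show that for each quotient except $1/\eta$ and $\eta(\tau)\eta(4\tau)/\eta(2\tau)^3$ these cuspidal conditions are incompatible for every $\ell\geq5$. Presumably two cusps give constant terms of the form $1\mp\left(\frac{\cdot}{\ell}\right)$ with opposite signs, or a cusp where $g$ has order $0$ gives constant term $1\not\equiv0$. If the cusp data alone does not suffice for some quotient, the fallback is to compute the $q^1$ coefficient of $F_\ell$ as an explicit polynomial in $\ell$, reduce it mod $\ell$ to a nonzero rational constant, and check finitely many small $\ell$ by direct computation. This must be carried out case by case over the fourteen quotients using the appendix expansions.
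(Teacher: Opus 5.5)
Your framework is the same as the paper's: the explicit form $F_\ell$, its cusp constant terms via Lemma~\ref{lemma_valcusp}, and the $q$-expansion principle. There are, however, two genuine gaps.

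\textbf{The case $t=0$ has no argument.} Under the hypothesis $\sum a(\ell n)q^n\equiv1$, Theorem~\ref{thm_main1} gives $F_\ell|U_\ell\equiv\chi(\ell)g(\ell\tau)\not\equiv0$. So your later step ``$F_\ell\equiv0$ forces the constant terms to vanish'' never applies. The two checks you suggest also give nothing:
\begin{itemize}
\item The constant terms at $\infty$ agree. Both equal $1$, because $\ord_\infty f=0$ kills the constant term of the bracket.
\item By \eqref{eq_rcfg}, the $q^1$-coefficient of $F_\ell|U_\ell$ is a combination of the unknown values $a(\ell-m^2)$. Comparing it only restates the hypothesis.
\end{itemize}
The missing idea, used in Corollary~\ref{coro_overp}, is to apply $U_\ell$ once more. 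This gives $\chi(\ell)g\equiv F_\ell|U_\ell^2$, so a lacunary theta series of weight $\tfrac12$ would be congruent modulo $\ell$ to a form in $M_{\ell-1}(\Gamma_0(N))$. That is impossible. Take primes $p$ with $g|U_p=g(p\tau)$ and $1+p^{-1}\not\equiv0\pmod\ell$; the $T_p$-orbit of $g-1$ then spans the infinitely many linearly independent series $(g-1)(p^r\tau)$, contradicting finite dimensionality. Alternatively, squaring would give a congruence between forms of weights $1$ and $2\ell-2$, which are incongruent modulo $\ell-1$.

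\textbf{For $t\neq0$, injectivity of $U_\ell$ on $M_{\ell-1}(\Gamma_0(N))$ modulo $\ell$ is not available at general level.} The paper uses Jochnowitz's result only for $N=1$. Modulo $\ell$, the kernel of $U_\ell$ is the image of $\D$. A theta-cycle computation shows that if $0\neq G\in S_4(\Gamma_0(N))$ satisfies $G|U_\ell\equiv0$, then $\D^{\ell-3}G$ has filtration exactly $\ell-1$ and is killed by $U_\ell$. Since $S_4(\Gamma_0(6))$ and $S_4(\Gamma_0(12))$ are nonzero, injectivity can fail at any prime where such a form is non-ordinary. Nothing rules these primes out for the six remaining quotients of level $6$ or $12$.

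The paper avoids injectivity by evaluating $F_\ell|U_\ell$ itself at the cusps. In weight $\ell-1$ we have $U_\ell\equiv T_\ell$, and Martin's formula \eqref{eq_hecek} gives $a_0(F_\ell|U_\ell,1/c)\equiv a_0(F_\ell,1/\ell c)$, because the second term carries the factor $\ell^{\ell-2}$. For these levels $1/\ell c$ is equivalent to $1/c$. So it suffices to find one cusp where $F_\ell$ has unit constant term.

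That is exactly the verification you leave as ``presumably''. By Ligozat's formula \eqref{eq_Ligozat}, each of the eight remaining quotients has a cusp $\kappa$ with $\ord_\kappa f=0$. At such a cusp every product $\D^jf\cdot\D^{v-j}g$ with $v\ge1$ has zero constant term, so $F_\ell$ takes the value $1$ there for every $\ell$. The $q$-expansion principle then gives $F_\ell|U_\ell\not\equiv0$. The quotient $\eta(\tau)\eta(4\tau)/\eta(2\tau)^3$ is excluded precisely because it has nonzero order at every cusp.
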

We note that this result is not covered by the non-congruence results established in previous literatures (c.f. \cite{zbMATH06353661, zbMATH06789145}). Our results, combined with those in \cite[Proposition 1]{zbMATH07343720}, assert that there do not exist any of the following Ramanujan congruences:
\[
a(\ell n+\beta)\equiv 0\pmod\ell, \text{ for }\beta\in\{0,1,\ldots,\ell-1\}.
\]
It is not surprising that the function $\eta(\tau)\eta(4\tau)/\eta(2\tau)^3$ stands as an exception, as it is the generating function for the partition function, by the identity:
\[
\frac{(q;q)_{\infty}(q^4;q^4)_{\infty}}{(q^2;q^2)^3_{\infty}}=\frac{1}{(-q;-q)_{\infty}}.
\]

In Type II and Type III, the weight of the space of modular forms in which the congruence occurs is $\ell+1$. Under this specific weight, and restricting to certain conditions, we can prove that $U_{\ell}^2=\mathrm{Id}$ modulo $\ell$. Consequently, we establish the following congruences.
\begin{corollary}\label{coro_type2}
If the pair $(f,g)$ belongs to Type II or Type III, and the genus of $X_0(N)$ is zero with $\ell\nmid N$ and $N\neq 25$, then for every $n\geq 0$,
\[
\chi(\ell)a\left(\ell^2n-\tfrac{t}{w}(\ell^2-1)\right)\equiv  \left(1-\left(\frac{s(t-wn)}{\ell}\right)\right)a(n)\pmod\ell.
\]
Moreover, if $a(n)\neq 0$ for any sufficiently large $n$, let $S$ be the set of primes $\ell$ such that the following congruence holds:
\[
\sum_{n\geq 0}a(\ell n-\tfrac{t}{w}(\ell^2-1))q^n \equiv 0 \pmod \ell,
\]
then the natural density of $S$ is $0$.
\end{corollary}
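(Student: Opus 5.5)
We will apply $U_\ell$ once more to the congruence of Theorem \ref{thm_main2} (resp.\ to the explicit weight $\ell+1$ form built in the proof of Theorem \ref{thm_main3}). Write
\[
G_\ell:=\chi(\ell)\prod_{\delta\mid N}(q^{\ell\delta};q^{\ell\delta})^{r_{\delta}}_{\infty}\sum_{n\geq 0}a(\ell n-\tfrac{t}{w}(\ell^2-1))q^n\equiv H\big|U_\ell\pmod\ell,
\]
with $H=fg-\left(\frac{-sw}{\ell}\right)[f,g]_{(\ell-1)/2}\in M_{\ell+1}(\Gamma_0(N))$. The key claim is that, when $X_0(N)$ has genus zero, $\ell\nmid N$ and $N\neq 25$, the operator $U_\ell$ acts on $M_{\ell+1}(\Gamma_0(N))\otimes\mathbb{F}_\ell$ as an involution. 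We would derive this from $U_\ell\equiv T_\ell$ modulo $\ell$ in weight $k=\ell+1$ (the term $\ell^{k-1}V_\ell$ vanishes), together with the $\theta$-operator/filtration theory. For $F$ of weight $\ell+1$, $F|U_\ell$ has filtration at most $\ell+1$, and $(F|U_\ell)|V_\ell\equiv F-\theta^{\ell-1}F$. Since $\theta^{\ell-1}F\cdot$ corrections give $F|U_\ell|V_\ell|U_\ell= F|U_\ell$, the strategy is to show that $U_\ell$ restricted to the space is injective. Injectivity is the part that uses genus zero: we would argue that for genus zero levels (where $M_{\ell+1}(\Gamma_0(N))$ has an explicit basis of Hauptmodul powers times a fixed form, and $N\neq 25$ avoids a pathological Eisenstein/character coincidence), the kernel of $U_\ell$ mod $\ell$ in weight $\ell+1$ is trivial, in the spirit of Jochnowitz's result \cite[Corollary 7.7]{zbMATH03851200}. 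Combining this with idempotence-type relations gives $U_\ell^2\equiv \mathrm{Id}$. This step is the main obstacle, and we would first try to verify it directly by computing $U_\ell$ on the Hauptmodul basis.

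Granting $U_\ell^2\equiv\mathrm{Id}$, we get $G_\ell|U_\ell\equiv H\pmod\ell$. Both sides are then expanded. On the left, by \eqref{eq_propertyUl}, $G_\ell|U_\ell$ equals $g$'s product factor $\prod(q^{\delta};q^{\delta})^{r_\delta}$ times $\chi(\ell)\sum_n a(\ell^2n-\frac{t}{w}(\ell^2-1))q^n$, up to the shift by $q^{t/w}$, i.e.\ essentially $\chi(\ell)\,g\cdot\sum a(\ell^2 n-\cdots)q^{n-t/w}$. On the right, we compute the Rankin--Cohen bracket mod $\ell$. With $a=3/2$, $b=1/2$, $v=(\ell-1)/2$, the binomial $\binom{b+v-1}{j}=\binom{(\ell-2)/2}{j}$ and $\binom{a+v-1}{v-j}$ reduce so that, modulo $\ell$, only the $j=v$ term survives (up to a unit). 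Hence $[f,g]_v\equiv c\,\D^{v}f\cdot g$. Then $\D^{(\ell-1)/2}$ applied to $q^{n-t/w}$ gives $(n-t/w)^{(\ell-1)/2}\equiv\left(\frac{n-t/w}{\ell}\right)$. Combining with $\left(\frac{-sw}{\ell}\right)$ and the constant $c$ (which we expect to be $\pm1$ after a Legendre-symbol computation), we obtain $H\equiv g\sum a(n)\bigl(1-\left(\frac{s(t-wn)}{\ell}\right)\bigr)q^{n-t/w}$. Dividing by $g$ (an invertible $q$-series) and comparing coefficients yields the stated congruence. For Type III, the same argument applies, with the shadow contribution eliminated by condition (4).

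For the density statement, suppose $\ell\in S$. Then $G_\ell\equiv0$, hence $H\equiv G_\ell|U_\ell\equiv0$. Therefore $a(n)\equiv0\pmod\ell$ whenever $\left(\frac{s(t-wn)}{\ell}\right)\neq1$, and in particular for every $n$ with $\ell\mid t-wn$ and for every $n$ making the symbol $-1$. Fix finitely many large $n_1,\dots,n_r$ with $a(n_i)\neq0$. A prime $\ell\in S$ must either divide some $a(n_i)$, which happens for only finitely many $\ell$, or have $\left(\frac{s(t-wn_i)}{\ell}\right)=1$ for all $i$. Choosing the $n_i$ so that the values $s(t-wn_i)$ are multiplicatively independent modulo squares, Chebotarev (quadratic reciprocity) shows that the set of such $\ell$ has density $2^{-r}$. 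Letting $r\to\infty$ gives density $0$.
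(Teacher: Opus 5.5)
The bracket computation, the cancellation of $g$, and the density argument all match the paper. Only $j=v$ survives because $a+v-1=\ell/2\equiv 0$, and in fact $c=(-1)^v\binom{v-1/2}{v}\equiv(-1)^v\binom{-1}{v}=1$, so there is no sign left to determine. For the independent choice of the $n_i$, the paper takes $n_j=(p_j+t)/w$ with distinct large primes $p_j\equiv -t\pmod w$, $p_j\nmid s$, giving $s(t-wn_j)=-sp_j$.

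\textbf{The gap.} Everything rests on $U_\ell^2\equiv\mathrm{Id}$ on $M_{\ell+1}(\Gamma_0(N))\otimes\mathbb{F}_\ell$, and your sketch gives no valid argument for it.
\begin{itemize}
\item The identity $F|U_\ell V_\ell U_\ell=F|U_\ell$ is a tautology, since $V_\ell U_\ell=\mathrm{Id}$ on $q$-series.
\item Injectivity of $U_\ell$ only makes $U_\ell$ invertible, not an involution. The ``idempotence-type relations'' meant to bridge this are never specified, and filtration considerations alone cannot supply them.
\end{itemize}
The obstruction lives in weight $2$. If $f\in S_2(\Gamma_0(N))$ is a normalized eigenform and $\ell\nmid N$, then $fE_{\ell-1}\in M_{\ell+1}(\Gamma_0(N))$ and $(fE_{\ell-1})|U_\ell\equiv a_\ell(f)\,fE_{\ell-1}\pmod{\ell}$. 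So $U_\ell$ is injective on this vector whenever $a_\ell(f)\not\equiv 0$, while $U_\ell^2$ acts by $a_\ell(f)^2$. For example, $N=11$, $\ell=7$ gives $a_7=-2$ and $4\not\equiv 1\pmod 7$. Thus genus zero is not an input to injectivity: it is exactly the absence of such weight-$2$ forms, and the proof must use it in that form. Checking a Hauptmodul basis by hand cannot substitute, since it would have to be done for every $\ell$.

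\textbf{What the paper does.} Proposition \ref{prop_ul^2} passes to weight $2$ and level $N\ell$.
\begin{itemize}
\item The map $h\mapsto hE^*_{\ell-1}+\ell^{(1-\ell)/2}(hE^*_{\ell-1})|_{\ell+1}W^{N\ell}_\ell U_\ell$ identifies $S_2(\Gamma_0(N\ell))\otimes\mathbb{F}_\ell$ with $S_{\ell+1}(\Gamma_0(N))\otimes\mathbb{F}_\ell$; the dimensions agree because $X_0(N)$ has genus zero.
\item Every form in $S_2(\Gamma_0(N\ell))$ is then $\ell$-new, so $U_\ell=-W^{N\ell}_\ell$ there, which is an involution.
\item On the Eisenstein space, $U_\ell\equiv T_\ell$ acts on $E^{\chi,\bar\chi}_{\ell+1}(d\tau)$ by $\chi(\ell)$ up to conjugation. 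So one needs $\chi(\ell)^2=1$ for every $\chi$ with $\mathrm{cond}(\chi)^2\mid N$. This, rather than a vague ``coincidence'', is why $N=25$ (quartic characters modulo $5$) is excluded.
\item Finally one must rule out cusp/Eisenstein congruences that would stop $M_{\ell+1}=\mathcal{E}_{\ell+1}\oplus S_{\ell+1}$ from splitting over $\mathbb{Z}_{(\ell)}$. The paper does this via Mazur's Eisenstein ideal and Yoo's results, checking $(N,\ell)=(13,7)$ and $\ell=3$ directly.
\end{itemize}
With that proposition in hand, the rest of your argument goes through as written.
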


One of the most important example of Type III comes from the smallest part of partitions $\spt(n)$. Andrews \cite[Theorem 4]{zbMATH05374875} showed that it has the generating function:
\begin{equation}\label{eq_genspt}
\sum_{n \ge 1} \spt(n) q^n = \frac{1}{(q; q)_{\infty}} \sum_{n \ge 1} \frac{n q^n}{1 - q^n} + \frac{1}{(q; q)_{\infty}} \sum_{n \ge 1} \frac{(-1)^n q^{\frac{n(3n+1)}{2}} (1 + q^n)}{(1 - q^n)^2}.
\end{equation}
Bringmann \cite[Theorem 1.1]{zbMATH05317179} showed that the function
\begin{equation}\label{eq_modspt}
\mathcal{A}(\tau):=\sum_{n\geq 0}\left(\spt(n)+\frac{24n-1}{12}p(n)\right)q^{n-\frac{1}{24}}-\frac{\sqrt{3}i}{2\pi}\int_{-\overline{\tau}}^{i\infty}\frac{\eta(w)}{(-i(w+\tau))^{3/2}}dw
\end{equation}
is a harmonic weak Maass form, and it satisfies the conditions in Theorem \ref{thm_main3} with $g(\tau)=\eta(\tau)$ and $N=1$, see \cite[Theorem 2.1]{zbMATH06191432}. Using Corollary \ref{coro_type2}, we immediately obtain the congruence of Ono \cite[Theorem 1.1]{zbMATH06133929}:
\[
\spt(\ell^2n-\delta_{\ell})\equiv \left(\frac{3}{\ell}\right)\left(1- \left(\frac{1-24n}{\ell}\right)\right)\left( \spt(n)+\frac{24n-1}{12}p(n)\right)\pmod\ell.
\]
For more examples of Type II and Type III, see Section \ref{section_ex}.

The structure of this paper is as follows. We  prove our three theorems in Section \ref{section_2}, and the proof of Corollary \ref{coro_type2} is also in Section \ref{section_2}. In Section \ref{section_ex}, we provide examples to illustrate our results, and the proofs of Corollary \ref{coro_inconforpar} and Corollary \ref{coro_inconforTypeI} are in Section \ref{section_3.1}.

\section{Proof of results}\label{section_2}
Before beginning the proof, we first recall the basic properties of the Rankin-Cohen bracket. The Rankin-Cohen bracket defined in \eqref{eq_RC_def} satisfies (c.f. \cite{zbMATH03485913})
\[
[f\big|_a\gamma,g\big|_b\gamma]_v=[f,g]_v\big|_{a+b+2v}\gamma,~\gamma\in\operatorname{GL}_2^{+}(\mathbb{R}),
\]
where the slash operator is defined by
\[
(f\big|_k\gamma)(\tau)=(ad-bc)^{k/2}(c\tau+d)f\left(\frac{a\tau+b}{c\tau+d}\right),~\gamma=\begin{pmatrix}
a&b\\
c&d
\end{pmatrix}\in\operatorname{GL}_2^{+}(\mathbb{R}).
\]
In this paper, this property serves two purposes:
\begin{enumerate}
\item If $f$ and $g$ are weakly holomorphic modular forms with weights $a,b$ and multipliers $\nu_1,\nu_2$, respectively, then $[f,g]_v$ is a weakly holomorphic modular form with weight $a+b+2v$ and multiplier $\nu_1\nu_2$.
\item For a cusp $\kappa$ of $\Gamma$, fix $\gamma_{\kappa}\in \SL_2(\mathbb{Z})$ with $\gamma_{\kappa}(i\infty)=\kappa$, we have
\[
[f,g]_v\big|_{a+b+2v}\gamma_{\kappa}=[f\big|_a\gamma_{\kappa},g\big|_b\gamma_{\kappa}]_v.
\]
This shows that we can compute the expansion of $[f,g]_v$ at a given cusp via the expansions of $f$ and $g$ at the corresponding cusps.
\end{enumerate}

\begin{lemma} \label{lemma_RC_valuation}
Suppose that $f \in M_a^!(\Gamma,\nu_1)$ and $g \in M_b^!(\Gamma,\nu_2)$ are weakly holomorphic modular forms. Let $c$ be a cusp of $\Gamma$ and let $\ord_c(\cdot)$ denote the order of vanishing at $c$. Then for any integer $v \ge 0$, the $v$-th Rankin-Cohen bracket satisfies
\[
    \ord_c([f, g]_v) \ge \ord_c(f) + \ord_c(g).
\]
Consequently, if the product $f \cdot g$ is a holomorphic modular form (resp. cusp form), then $[f, g]_v$ is also a holomorphic modular form (resp. cusp form) in $M_{a+b+2v}(\Gamma,\nu_1\nu_2)$.
\end{lemma}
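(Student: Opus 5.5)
The plan is to compute the expansion of $[f,g]_v$ at the cusp $c$ from the expansions of $f$ and $g$ there, using the slash-equivariance of the Rankin--Cohen bracket recalled above. Fix $\gamma_c\in\SL_2(\mathbb{Z})$ with $\gamma_c(i\infty)=c$. Then
\[
[f,g]_v\big|_{a+b+2v}\gamma_c=[f\big|_a\gamma_c,\,g\big|_b\gamma_c]_v .
\]
This reduces the problem to the cusp $i\infty$. Write $F:=f|_a\gamma_c=\sum_{n\ge n_0}\alpha(n)q_h^{n+\kappa_1}$ and $G:=g|_b\gamma_c=\sum_{m\ge m_0}\beta(m)q_h^{m+\kappa_2}$. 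Here $q_h=e^{2\pi i\tau/h}$, $h$ is the cusp width, and $\kappa_1,\kappa_2$ are the offsets coming from the multipliers. The orders at $c$ are then $\ord_c(f)=n_0+\kappa_1$ and $\ord_c(g)=m_0+\kappa_2$, measured in the local parameter $q_h$ with the paper's normalization; the argument does not depend on that normalization.

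The key step is that $\D=q\frac{d}{dq}$ acts diagonally on monomials. One has $\D q_h^{\lambda}=\frac{\lambda}{h}q_h^{\lambda}$, so $\D^jF$ is a $q_h$-series supported on exponents $\ge n_0+\kappa_1$, and similarly for $\D^{v-j}G$. Each term $\D^jF\,\D^{v-j}G$ of the bracket is therefore supported on exponents $\ge \ord_c(f)+\ord_c(g)$, and so is the finite sum with binomial coefficients. This gives $\ord_c([f,g]_v)\ge\ord_c(f)+\ord_c(g)$.

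For the consequence, first apply purpose (1) of the equivariance property: $[f,g]_v\in M^!_{a+b+2v}(\Gamma,\nu_1\nu_2)$. If $fg$ is holomorphic, then $\ord_c(f)+\ord_c(g)=\ord_c(fg)\ge0$ at every cusp $c$. The order is additive because leading coefficients multiply. By the inequality, $\ord_c([f,g]_v)\ge 0$ at every cusp, which is exactly holomorphy at the cusps, so $[f,g]_v\in M_{a+b+2v}(\Gamma,\nu_1\nu_2)$. If $fg$ is cuspidal, the same argument gives strict positivity at every cusp, so $[f,g]_v$ is a cusp form.

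Two points need care. First, holomorphy of $[f,g]_v$ on the upper half-plane: $f$ and $g$ are holomorphic there, so their derivatives are too. Second, the main technical point is the bookkeeping of the fractional offsets $\kappa_i$ and of the constant $\D$ acting on $q_h$ rather than $q$. Since $\D$ only rescales coefficients and never lowers exponents, neither affects the support argument. A possible degenerate case is a nonzero leading coefficient that the bracket kills. That is harmless, because only an inequality is claimed.
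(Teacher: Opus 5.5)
Your proof is correct and is essentially the paper's argument. In both, $\D$ never lowers exponents in the local expansion, so each product $\D^j f\,\D^{v-j}g$ has order at least $\ord_c(f)+\ord_c(g)$ at $c$, and the consequence follows from $\ord_c(fg)=\ord_c(f)+\ord_c(g)$. Your explicit use of the equivariance $[f,g]_v\big|_{a+b+2v}\gamma_c=[f\big|_a\gamma_c,\,g\big|_b\gamma_c]_v$ to reduce to $i\infty$ makes precise a step the paper leaves implicit: $\D$ itself does not commute with the slash operator, so ``the action of $\D$ on the local expansion at $c$'' only makes sense through that identity.
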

\begin{proof}
By definition, the $v$-th Rankin-Cohen bracket $[f, g]_v$ is a linear combination of the products $D^r f \cdot D^{v-r} g$ for $0 \le r \le v$. Let $q_c$ be the local uniformizing parameter at the cusp $c$. The action of the differential operator $D$ on the local Fourier expansion $\sum a_n q_c^n$ does not decrease the minimum exponent of $q_c$. We deduce that
\begin{align*}
    \ord_c([f, g]_v) &\ge \min_{0 \le r \le v} \ord_c(D^r f \cdot D^{v-r} g) \\
                  &\ge \ord_c(f) + \ord_c(g).
\end{align*}

Furthermore, if $f \cdot g \in M_{k+l}(\Gamma)$, it implies $\ord_c(f \cdot g) = \ord_c(f) + \ord_c(g) \ge 0$ for all cusps $c$. By the established inequality, $\ord_c([f, g]_v) \ge 0$ at all cusps. Since weakly holomorphic forms have no poles in the upper half-plane,  $[f, g]_v$ is holomorphic in the upper half-plane.
\end{proof}

The following lemma is the origin of all our results in this paper.
\begin{lemma}\label{lemma_fgUl}
Let $\ell\geq 3$ be prime and $w$ be an integer with $\ell\nmid sw$ and $v:=(\ell-1)/2$. Let $f(\tau)=\sum_{n\geq 0}a(n)q^{n-\frac{t}{w}}$ with weight $a$ and $g(\tau)=\sum_{m\in\mathbb{Z}}b(m)q^{\frac{sm^2}{w}}$ be a theta series with weight $1/2$ satisfying $b(\ell m)=\chi(\ell)b(m)$, then
\[
\chi(\ell)\left(f(\tau)\cdot g(\ell^2\tau)\right)\big|U_{\ell}\equiv \left(fg-\left(\frac{sw}{\ell}\right)c_a^{-1}[f,g]_{v}\right)\big|U_{\ell} \pmod\ell,
\]
where $c_a= \binom{a-5/2}{\frac{\ell-1}{2}}$ is assumed to be invertible modulo $\ell$.
\end{lemma}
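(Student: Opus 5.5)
The plan is to reduce everything to the single product $f\cdot \D^{v}g$ after applying $U_\ell$, using two congruences: one for $\D^v g$ coming from Fermat's little theorem on the theta coefficients, and one for the bracket coming from the fact that $U_\ell$ only keeps exponents divisible by $\ell$. Throughout, $\D=q\frac{d}{dq}$ multiplies the coefficient of $q^{\alpha}$ by $\alpha$. All exponents lie in $\frac1w\mathbb{Z}$ and $\ell\nmid w$, so they are $\ell$-integral and "mod $\ell$" makes sense for them. As in the intended application, the exponents of $fg$ surviving $U_\ell$ are the integers divisible by $\ell$.

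\emph{Step 1: $\D^v g$ modulo $\ell$.} Since $g=\sum b(m)q^{sm^2/w}$, we have
\[
\D^v g=\sum_m b(m)\Big(\tfrac{s}{w}\Big)^{v}m^{\ell-1}q^{sm^2/w}.
\]
By Fermat, $m^{\ell-1}\equiv 1$ if $\ell\nmid m$ and $\equiv 0$ if $\ell\mid m$. By Euler's criterion, $(s/w)^{(\ell-1)/2}\equiv \left(\frac{s/w}{\ell}\right)=\left(\frac{sw}{\ell}\right)$. The terms with $\ell\mid m$ form
\[
\sum_m b(\ell m)q^{s\ell^2m^2/w}=\chi(\ell)g(\ell^2\tau),
\]
by the hypothesis $b(\ell m)=\chi(\ell)b(m)$. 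Hence
\[
\chi(\ell)g(\ell^2\tau)\equiv g-\left(\frac{sw}{\ell}\right)\D^v g\pmod\ell,
\]
and multiplying by $f$ and applying $U_\ell$ gives
\[
\chi(\ell)\big(f\cdot g(\ell^2\tau)\big)\big|U_\ell\equiv\Big(fg-\left(\tfrac{sw}{\ell}\right)f\,\D^vg\Big)\Big|U_\ell.
\]
It therefore suffices to show
\[
[f,g]_v\big|U_\ell\equiv c_a\,(f\,\D^v g)\big|U_\ell\pmod\ell.
\]

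\emph{Step 2: collapsing the bracket under $U_\ell$.} A term of $\D^jf\cdot\D^{v-j}g$ pairing $q^{\alpha}$ from $f$ with $q^{\beta}$ from $g$ carries the factor $\alpha^j\beta^{v-j}$. It survives $U_\ell$ only when $\ell\mid \alpha+\beta$, i.e.\ $\alpha\equiv-\beta\pmod\ell$. In that case $\alpha^j\beta^{v-j}\equiv(-1)^j\beta^v$. Thus
\[
(\D^jf\cdot \D^{v-j}g)\big|U_\ell\equiv(-1)^j(f\,\D^vg)\big|U_\ell.
\]
Inserting this into \eqref{eq_RC_def} with weights $a$ and $b=1/2$, the signs $(-1)^j$ cancel, and we get
\[
[f,g]_v\big|U_\ell\equiv\Big(\sum_{j=0}^{v}\binom{a+v-1}{v-j}\binom{v-\frac12}{j}\Big)(f\,\D^vg)\big|U_\ell .
\]

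\emph{Step 3: evaluating the constant.} By Chu--Vandermonde (a polynomial identity, valid for non-integral tops), the sum equals
\[
\binom{a+2v-\frac32}{v}=\binom{a+\ell-\frac52}{v}.
\]
Since $v<\ell$, the function $x\mapsto\binom{x}{v}$ is a polynomial with $\ell$-integral coefficients ($v!$ is a unit). Evaluated at $\ell$-integral arguments, it therefore satisfies $\binom{x+\ell}{v}\equiv\binom{x}{v}$. This gives the constant $\binom{a-5/2}{v}=c_a$. Because $c_a$ is assumed invertible modulo $\ell$, dividing by it and substituting into Step 1 yields the lemma.

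The main obstacle is the bookkeeping in Step 2. One must check that working modulo $\ell$ with exponents in $\frac1w\mathbb{Z}$ is legitimate, and that the restriction imposed by $U_\ell$ is exactly $\alpha+\beta\equiv0\pmod\ell$ for the relevant terms. Once the congruence $\alpha\equiv-\beta$ is justified, the remaining steps are formal.
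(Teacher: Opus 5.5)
Your proof is correct and is essentially the paper's argument. The paper also expands $[f,g]_v$ coefficientwise and uses that, under $U_\ell$, the exponent from $f$ is congruent to minus the exponent from $g$ modulo $\ell$, so the $j$-sum collapses to the Chu--Vandermonde constant $c_a$; it then applies Fermat/Euler to $(sm^2/w)^v$ and uses $b(\ell m)=\chi(\ell)b(m)$ to isolate the $\ell\mid m$ terms. Factoring through $(f\,\D^v g)\big|U_\ell$ and the identity $\chi(\ell)g(\ell^2\tau)\equiv g-\left(\frac{sw}{\ell}\right)\D^v g \pmod{\ell}$ is just a cleaner packaging of that same computation.
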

\begin{proof}
We prove this lemma by computing the Fourier series:
\begin{align*}
[f,g]_{v}
&= \sum_{0\leq j\leq v} (-1)^j \binom{a+v-1}{v-j} \binom{v-1/2}{j} \\[1ex]
&\quad \times \sum_{\substack{n\geq 0\\ m\in\mathbb{Z}}} (n-t/w)^j (sm^2/w)^{v-j} a(n)b(m) q^{n+\frac{sm^2-t}{w}} \\[1ex]
&= w^{-v} \sum_{0\leq j\leq v} \binom{a+v-1}{v-j} \binom{v-1/2}{j} \\[1ex]
&\quad \times \sum_{m\in\mathbb{Z}} \sum_{n\geq (sm^2-t)/w} (sm^2-nw)^j s^{v-j} m^{2v-2j} a\left(n+\frac{t-sm^2}{w}\right) b(m)q^{n}.
\end{align*}
Then, applying the operator $U_{\ell}$ to the above equation and reducing modulo $\ell$, we obtain
\begin{equation}\label{eq_rcfg}
[f,g]_{v}\big|U_{\ell}\equiv \left(\frac{sw}{\ell}\right)c_a\cdot\sum_{\substack{m\in\mathbb{Z}\\ \ell\nmid m}}\sum_{n\geq  (sm^2-t)/\ell w}a\left(\ell n+\frac{t-sm^2}{w}\right)b(m)q^n,
\end{equation}
where $c_a=\sum_{0\leq j\leq v}\binom{a+v-1}{v-j}\binom{v-1/2}{j}$. Here we use $w^{\frac{\ell-1}{2}}\equiv \left(\frac{w}{\ell}\right)$; $m^{\ell-1}\equiv 1$ if $\ell\nmid m$ and 0 if $\ell\mid m$. By applying the Vandermonde identity, we can simplify the constant:
\[
c_a=\binom{a+2v-3/2}{v}\equiv \binom{a-5/2}{\frac{\ell-1}{2}}\pmod\ell.
\]
Thus by $\sum_{\ell\mid m}b(m)q^{m^2/w}=\chi(\ell)\sum_{m\in\mathbb{Z}}b(m)q^{\ell^2 m^2/w}$ and \eqref{eq_rcfg}, we obtain
\begin{align*}
\left(fg-\left(\frac{sw}{\ell}\right)c_a^{-1}[f,g]_{v}\right)\big|U_{\ell}&\equiv \sum_{\substack{m\in\mathbb{Z}\\ \ell\mid m}}\sum_{n\geq (sm^2-t)/\ell w}a\left(\ell n+\frac{t-sm^2}{w}\right)b(m)q^{n}\\
&= \chi(\ell)\left(f(\tau)\cdot g(\ell^2\tau)\right)\big|U_{\ell}\pmod\ell,
\end{align*}
the claim follows.
\end{proof}

\begin{proof}[Proof of Theorem \ref{thm_main1}]
From the eta-quotient expression of $f(\tau)$, we can write $f(\tau)g(\ell^2\tau)$ as
\[
f(\tau)g(\ell^2\tau)=\sum_{n\geq 0}a(n)q^{n+\frac{t}{w}(\ell^2-1)}\prod_{\delta\mid N}(q^{\ell^2\delta};q^{\ell^2\delta})^{r_{\delta}}_{\infty}.
\]
Applying the $U_{\ell}$ operator to the above equation and using its property \eqref{eq_propertyUl}, we find that
\begin{equation}\label{eq_fgUl}
(f(\tau)g(\ell^2\tau))\big|U_{\ell}=\prod_{\delta\mid N}(q^{\ell\delta};q^{\ell\delta})^{r_{\delta}}_{\infty}\sum_{n\geq 0}a(\ell n-\tfrac{t}{w}(\ell^2-1))q^{n}.
\end{equation}
Set $a=-1/2$ and $v=(\ell-1)/2$.  By the standard identity for generalized binomial coefficients,
\[
c_{a}=\binom{-3}{v}=(-1)^v\binom{v+2}{2}=(-1)^{(\ell-1)/2}\frac{(\ell+1)(\ell+3)}{8}.
\]
Reducing modulo $\ell$ gives
\begin{equation}\label{eq_valca}
c_{a}\equiv \left(\frac{-1}{\ell}\right)\cdot\frac{3}{8}\pmod \ell.
\end{equation}
By combining the identity established in Lemma \ref{lemma_fgUl} with \eqref{eq_fgUl} and \eqref{eq_valca}, we obtain the congruence in Theorem \ref{thm_main1}. Furthermore, it follows from Lemma \ref{lemma_RC_valuation} that the function $[f,g]_{(\ell-1)/2}$ is a holomorphic modular form in $M_{\ell-1}(\Gamma_0(N))$.
\end{proof}

\begin{proof}[Proof of Theorem \ref{thm_main2}]
The proof is similar to that of Theorem \ref{thm_main1}. Here, we again utilize Lemma \ref{lemma_fgUl} to obtain the desired congruence, and note that
\[
c_a=\binom{-1}{\frac{\ell-1}{2}}= (-1)^{\frac{\ell-1}{2}}\equiv \left(\frac{-1}{\ell}\right)\pmod{\ell}.
\]
\end{proof}

\begin{proof}[Proof of Theorem \ref{thm_main3}]
We use the tool of holomorphic projection. For a basic introduction to holomorphic projection, one may consult \cite[Chapter 10]{zbMATH06828732}. Under our conditions, the function $[F,g]_v$ is a real-analytic modular form and satisfies the conditions required for holomorphic projection. Considering the holomorphic projection of $[F,g]_v$, and applying a theorem of  Mertens \cite[Theorem 5.4]{zbMATH06620622}, we obtain
\[
\pi_{\operatorname{hol}}([F,g]_v)=[f,g]_v+(4w)^{-v}\binom{2v}{v}L_v^{f,g},
\]
and $L_v^{f,g}$ is an $\ell$-integral linear combination of the form 
\[
\begin{split}
\Lambda_{s,u}^{\psi}(\tau;v) = \sum_{r=1}^{\infty} \left( 2 \sum_{\substack{sm^2 - un^2 = r \\ m,n \ge 1}} b(m) \overline{\psi(n)} (\sqrt{s}m - \sqrt{u}n)^{2v+1} \right) q^{r/w} \\
+ \, \overline{\psi(0)} \sum_{m=1}^{\infty} b(m) (\sqrt{s}m)^{2v+1} q^{sm^2/w},
\end{split}
\]
independent of $v$, where the $\psi$ are even characters. When $v=0$, this holomorphic projection yields a holomorphic quasi-modular form of weight 2 and level $N$, and for $v>1$, the holomorphic projection yields a holomorphic modular form in $M_{2v+2}(\Gamma_0(N))$.

Now set $v=(\ell-1)/2$. We find that
\[
\begin{split}
\Lambda_{s,u}^{\psi}(\tau;v) \equiv \sum_{r=1}^{\infty} \left( 2 \sum_{\substack{sm^2 - un^2 = r \\ m,n \ge 1}} b(m) \overline{\psi(n)} \left(\left(\frac{s}{\ell}\right)\sqrt{s}m-\left(\frac{u}{\ell}\right)\sqrt{u}n\right) \right) q^{r/w} \\
+ \, \overline{\psi(0)} \sum_{m=1}^{\infty} b(m) \left(\frac{s}{\ell}\right)\sqrt{s}m q^{sm^2/w}\pmod{\ell}
\end{split},
\]
Therefore, under the condition $\left(\frac{us}{\ell}\right)=1$, we have 
\[
\left(\frac{s}{\ell}\right)\Lambda_{s,u}^{\psi}(\tau;v)\equiv \Lambda_{s,u}^{\psi}(\tau;0)\pmod\ell,
\]
and thus
\[
fg-\left(\frac{-ws}{\ell}\right)[f,g]_v\equiv  \pi_{\operatorname{hol}}(Fg)-\left(\frac{-ws}{\ell}\right) \pi_{\operatorname{hol}}([F,g]_v)\pmod\ell.
\]

The function $\pi_{\operatorname{hol}}(Fg)$ is a quasi-modular form of weight 2. By the structure theorem of quasi-modular forms (c.f. \cite[Proposition 20]{zbMATH05808162}), it is a linear combination of a classical modular form and the Eisenstein series $E_2$. Since $E_2\equiv E_{\ell+1}\pmod\ell$, $\pi_{\operatorname{hol}}(Fg)$ is congruent modulo $\ell$ to a holomorphic modular form of weight $\ell+1$. Thus, the function 
\[
\pi_{\operatorname{hol}}(Fg)-\left(\frac{-ws}{\ell}\right) \pi_{\operatorname{hol}}([F,g]_v)
\]
is congruent to a modular form in $M_{\ell+1}(\Gamma_0(N))$. The subsequent proof procedure is entirely analogous to the proofs of Theorem \ref{thm_main1} and Theorem \ref{thm_main2}.

\end{proof}
Before proving Corollary \ref{coro_type2}, we establish the following property regarding the $U_{\ell}$ operator. The result in the space of modular forms for $N=1$ were given by Serre \cite[Th{\'e}or{\`e}me 11]{serre1973formes}, and our proof essentially follows that of Serre.
\begin{proposition}\label{prop_ul^2}
Let $N$ be a positive integer such that $g(X_0(N)) = 0$ with $N\neq 25$, and let $\ell \ge 3$ be a prime with $(N, \ell) = 1$. For any $f \in M_{\ell+1}(\Gamma_0(N), \mathbb{Z}_{(\ell)})$ whose Fourier expansion at the cusp $i\infty$ has coefficients in  $\mathbb{Z}_{(\ell)}$, we have
\[
    f | U_\ell^2 \equiv f \pmod \ell.
\]
\end{proposition}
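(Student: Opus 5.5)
We follow Serre's argument for level one, replacing the structure of $M_*(\SL_2(\mathbb{Z}))$ by the genus-zero structure of $X_0(N)$. The starting point is the standard mod-$\ell$ identity
\[
f|U_\ell \equiv f|T_\ell - \ell^{k-1}f(\ell\tau)\equiv f|T_\ell \pmod \ell \qquad (k=\ell+1\geq 2),
\]
so $U_\ell$ preserves $M_{\ell+1}(\Gamma_0(N),\mathbb{Z}_{(\ell)})$ modulo $\ell$. It therefore suffices to show that $U_\ell$ acts as an involution on the reduction $\overline{M}_{\ell+1}$.

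The key structural input is the filtration. For $h$ with $\ell$-integral coefficients, the Frobenius-type identity $h|U_\ell(\tau)^{\ell}\equiv \big(\text{something}\big)$ together with $h\equiv (h|U_\ell)(\ell\tau)$ modulo the part killed by $\theta^{\ell-1}$ gives
\[
h - \theta^{\ell-1}h \equiv (h|U_\ell)|V_\ell,\qquad (h|V_\ell)\equiv (h|U_\ell)^\ell .
\]
So if $f\in M_{\ell+1}$, then $g:=f|U_\ell$ satisfies $g^\ell \equiv f-\theta^{\ell-1}f$. Serre's filtration theory (valid for $\Gamma_0(N)$, $\ell\nmid N$, via Katz) gives $w(\theta^{\ell-1} f)\le w(f)+(\ell-1)(\ell+1)$, hence $\ell\, w(g)\le \ell^2+\ell$ and $w(g)\le \ell+1$, consistent with the above. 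The goal is the sharper statement that for $g=f|U_\ell$ we have $g|U_\ell\equiv f$, equivalently $f\equiv (f|U_\ell)|U_\ell$.

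To prove this, decompose $\overline{M}_{\ell+1}$ using multiplication by $E_{\ell-1}\equiv1$. Any $h\in M_2(\Gamma_0(N))$ gives $hE_{\ell-1}\in M_{\ell+1}$. For such $h$, $U_\ell$ acts through $T_\ell$ on weight $2$, and genus zero means $S_2(\Gamma_0(N))=0$. Hence $M_2$ is spanned by the Eisenstein series $E_2(\tau)-dE_2(d\tau)$, on which $T_\ell$ acts by $1+\ell\equiv 1$, so $U_\ell$ is the identity there. For the complementary part, write $f=\theta$-images: show that $\overline{M}_{\ell+1}=\overline{M}_2\oplus \theta^{?}$-part. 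Concretely, the forms of filtration exactly $\ell+1$ are of the form $\theta h'$ up to low filtration for $h'$ of weight $\ell-1$ with filtration small. Then use the identity $(\theta^{\ell-1}\cdots)$: for such $f$, $f|U_\ell$ has low filtration (weight $2$ contributions), mirroring Serre. Alternatively, and more concretely, pick a Hauptmodul $t$ and write $M_{\ell+1}=E\cdot \mathbb{C}[t]_{\le d}$ with $E$ a fixed form, then verify the involution on a basis using the $q$-expansion identity $a(\ell^2 n)\equiv a(n)$ derived from $T_\ell$ eigen-relations. I would attempt the filtration route first.

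The main obstacle is the case of forms of exact filtration $\ell+1$: Serre's argument uses that such forms arise as $\theta$ of weight $\ell-1$ forms, plus the vanishing $\theta^{\ell}\equiv \theta$ on $q$-expansions ($n^\ell\equiv n$). Thus for $f\equiv \theta h$ one gets $f|U_\ell\equiv 0$, which is not an involution. Hence I expect to need the dimension count, via the genus zero and $N\ne 25$ hypotheses guaranteeing $\dim M_{\ell+1}-\dim M_{\ell-1}=\dim M_2$-type equalities with no extra cusp forms from elliptic points, to show that $\overline{M}_{\ell+1}$ is exactly spanned by $\overline{M}_2\cdot E_{\ell-1}$ together with forms $F$ where $F$ and $F|U_\ell$ are swapped. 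This is the analogue of Serre's pairing, which is $F=\theta^{(\ell+1)/2}$-twists swapping with quadratic characters. The exclusion $N=25$ presumably arises exactly where this dimension identity fails, and checking it case-by-case over the genus-zero levels is the step I would verify last.
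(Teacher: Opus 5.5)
Your proposal has a genuine gap. It never gives a mechanism that forces $U_\ell^2\equiv 1$ on forms of exact filtration $\ell+1$. For $\ell\ge5$ the filtration of a form in $\overline{M}_{\ell+1}$ is either $2$ or $\ell+1$, so these forms are everything outside the small subspace $\overline{M}_2(\Gamma_0(N))E_{\ell-1}$.

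\begin{itemize}
\item Your treatment of that small subspace is essentially fine, up to the omission noted in the second paragraph.
\item The bound $w(f|U_\ell)\le\ell+1$ only re-proves that $U_\ell$ preserves weight $\ell+1$. That was already clear from $U_\ell\equiv T_\ell$, and it says nothing about $U_\ell^2$.
\item Your description of the exact-filtration part is wrong. A nonzero $f$ with $w(f)=\ell+1$ is never a $\theta$-image. The standard formula $w(\theta g)=w(g)+\ell+1$ for $\ell\nmid w(g)$ gives $w(\theta^i f)=(i+1)(\ell+1)$ for $0\le i\le\ell-1$. Hence $w(\theta^{\ell-1}f)=\ell(\ell+1)\neq w(f)$, so $f\not\equiv\theta^{\ell-1}f$, i.e.\ $f|U_\ell\not\equiv0$. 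Also, $\theta h'$ with $h'$ of weight $\ell-1$ lies in weight $2\ell$, not $\ell+1$.
\item The ``pairing by $\theta^{(\ell+1)/2}$-twists'' is not an argument.
\item The Hauptmodul alternative is circular. Deriving $a(\ell^2n)\equiv a(n)$ from $T_\ell$-eigen-relations presupposes that the mod $\ell$ eigenvalues of $T_\ell$ on $M_{\ell+1}(\Gamma_0(N))$ square to $1$, which is exactly the claim.
\end{itemize}

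The missing idea, due to Serre and used in the paper, is to leave weight $\ell+1$ altogether and work in weight $2$ on $\Gamma_0(N\ell)$.

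\begin{itemize}
\item \emph{Cuspidal part.} The map $h\mapsto hE_{\ell-1}^*+\ell^{(1-\ell)/2}(hE_{\ell-1}^*)|_{\ell+1}W^{N\ell}_\ell U_\ell$ is the identity on $q$-expansions mod $\ell$, so it commutes with $U_\ell$. It embeds $S_2(\Gamma_0(N\ell),\mathbb{F}_\ell)$ into $S_{\ell+1}(\Gamma_0(N),\mathbb{F}_\ell)$. It is onto because $\dim S_{\ell+1}(\Gamma_0(N))=\dim S_2(\Gamma_0(N\ell))$ when $g(X_0(N))=0$.
\item \emph{Where genus zero enters.} It gives $S_2(\Gamma_0(N))=0$, so every weight-$2$ cusp form on $\Gamma_0(N\ell)$ is $\ell$-new. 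There $U_\ell=-W_\ell^{N\ell}$, hence $U_\ell^2=(W^{N\ell}_\ell)^2=1$, and this descends modulo $\ell$.
\item \emph{Eisenstein part.} On $E^{\chi,\bar\chi}_{\ell+1}(d\tau)$, $U_\ell$ acts modulo $\ell$ by the $T_\ell$-eigenvalue $\chi(\ell)$. So one needs $\chi(\ell)^2=1$ for all primitive $\chi$ with $\mathrm{cond}(\chi)^2\mid N$.
\item \emph{Why $N=25$ is excluded.} The characters mod $5$ have order $4$, so $\chi(\ell)^2=1$ can fail. The exclusion is not a failure of a dimension identity: the identity above holds for every genus-zero $N$, including $25$.
\item \emph{Your weight-2 claim.} The assertion that $M_2(\Gamma_0(N))$ is spanned by the $E_2(\tau)-dE_2(d\tau)$ is false for $N\in\{9,16,18,25\}$. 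At those levels character Eisenstein series occur, and they are exactly where the $N=25$ phenomenon lives.
\item \emph{Integral splitting.} The Eisenstein/cusp splitting need not hold over $\mathbb{Z}_{(\ell)}$, so Eisenstein congruences must still be ruled out. The paper appeals to Mazur's Eisenstein ideal and Yoo's cuspidal class group results, and checks $\ell=3$ and $(N,\ell)=(13,7)$ directly. Your outline does not address this step.
\end{itemize}
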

\begin{proof}
First we assume that $\ell\geq 5$. We will prove $U_{\ell}^2\equiv\mathrm{Id}\pmod{\ell}$ separately for the space of Eisenstein series and the space of cusp forms. At the end of the proof, we will explain why this is sufficient to prove that $U_{\ell}^2\equiv\mathrm{Id}\pmod{\ell}$ holds in the space of modular forms. 

Let $S_{\ell+1}(\Gamma_0(N), \mathbb{F}_{\ell}):=S_{\ell+1}(\Gamma_0(N),  \mathbb{Z}_{(\ell)})\otimes  \mathbb{F}_{\ell}$ be the space of $q$-series of the cusp forms modulo $\ell$. For $f\in S_{2}(\Gamma_0(N\ell),  \mathbb{Z}_{(\ell)})$, we claim that the trace map (c.f. \cite[Lemma 17]{zbMATH03283550})  
\begin{equation}
f\mapsto f\cdot E_{\ell-1}^{*}+\ell^{\frac{1-\ell}{2}}(f\cdot E_{\ell-1}^{*})\big|_{\ell+1}W^{N\ell}_{\ell}U_{\ell}
\end{equation}
gives an well-defined injection:
\begin{equation}\label{eq_iso}
S_2(\Gamma_0(N\ell), \mathbb{F}_{\ell})\mapsto S_{\ell+1}(\Gamma_0(N), \mathbb{F}_{\ell}),
\end{equation}
where $W^{N}_{\ell}$ is the Atkin-Lehner operator, and
\[
E_{\ell-1}^{*}(\tau):=E_{\ell-1}(\tau)-\ell^{\ell-1}E_{\ell-1}(\ell\tau).
\]
Since the genus of $X_0(N)$ is zero, the space of old forms $S_2(\Gamma_0(N))$ is trivial, and thus every form in the space $S_2(\Gamma_0(N\ell))$ is a $\ell$-new form, which satisfies
\[
f\big|_2W^{N\ell}_{\ell}=-f\big|U_{\ell},
\]
which means that the $\ell$-adic valuation $v_{\ell}(f\big|_{2}W^{N\ell}_{\ell})$ is at least 0. It is not hard to see that $E_{\ell-1}^{*}(\tau)\equiv 1\pmod\ell$ and $v_{\ell}(E_{\ell-1}^{*}\big|_{\ell-1}W^{N\ell}_{\ell})\geq (\ell+1)/2$. It follows from these facts that 
\[
f\cdot E_{\ell-1}^{*}+\ell^{\frac{1-\ell}{2}}(f\cdot E_{\ell-1}^{*})\big|_{\ell+1}W^{N\ell}_{\ell}U_{\ell}\equiv f\pmod\ell,
\] 
and the claim is proved. If the genus of $X_0(N)$ is zero, from the dimension formula (c.f.\cite[Theorem 3.5.1]{diamond2005first}), we have 
\[
\dim_{\mathbb{C}} S_{\ell+1}(\Gamma_0(N)) = \dim_{\mathbb{C}} S_2(\Gamma_0(N\ell)), 
\]
and thus the map \eqref{eq_iso} is indeed an isomorphism. 

The Atkin-Lehner involution $W_\ell^{N\ell}$ acts as a involution on $S_2(\Gamma_0(N\ell))$. Consequently, $U_\ell^2 = W_\ell^2 = \mathrm{id}$ on $S_2(\Gamma_0(N\ell))$ over $\mathbb{C}$. Pulling this back through the mod $\ell$ isomorphism \eqref{eq_iso} yields $U_\ell^2 \equiv \mathrm{Id} \pmod\ell$ on $S_{\ell+1}(\Gamma_0(N),\mathbb{F}_{\ell})$.

The basis for the Eisenstein subspace $\mathcal{E}_{\ell+1}(\Gamma_0(N))$ with trivial Nebentypus consists of Eisenstein series $E_{\ell+1}^{\chi, \bar{\chi}}(d\tau)$, where $d \mid N$, and $\chi$ is a primitive Dirichlet character such that $\text{cond}(\chi)^2 \mid N$. Under our conditions, no congruences occur within the Eisenstein space, and therefore the operator $U_\ell^2 \equiv \mathrm{id} \pmod \ell$ holds for the  space $\mathcal{E}_{\ell+1}(\Gamma_0(N))$ if and only if $\chi(\ell)^2 = 1$ for all permissible characters $\chi$.

It is a classical result that $g(X_0(N)) = 0$ if and only if 
\begin{equation*}
    N \in \{1, 2, 3, 4, 5, 6, 7, 8, 9, 10, 12, 13, 16, 18, 25\}.
\end{equation*}
The condition $\text{cond}(\chi)^2 \mid N$ restricts the existence of $\chi$:
\begin{enumerate}
    \item For $N \in \{1, 2, 3, 4, 5, 6, 7, 8, 10, 12, 13\}$, no non-trivial primitive character satisfies the condition.
    \item For $N \in \{9, 18\}$, $\text{cond}(\chi) = 3$. The  primitive character is the Legendre symbol $(\frac{\cdot}{3})$, which is real. 
    \item For $N = 16$, $\text{cond}(\chi) = 4$. The primitive character is $(\frac{-4}{\cdot})$, which is real. 
    \item For $N = 25$, the primitive characters modulo 5 are of order 4, taking values in $\{\pm 1, \pm i\}$. This is the reason we exclude $N=25$. 
\end{enumerate}

If the direct sum decomposition
\[
M_{\ell+1}(\Gamma_0(N),\mathbb{Z}_{(\ell)})=\mathcal{E}_{\ell+1}(\Gamma_0(N),\mathbb{Z}_{(\ell)})\oplus S_{\ell+1}(\Gamma_0(N),\mathbb{Z}_{(\ell)})
\] 
holds, then according to the above assertion, we can conclude that $U_{\ell}^2\equiv \textrm{Id}\pmod{\ell}$. However, this splitting does not hold in general over $\mathbb{Z}_{(\ell)}$. The failure of this splitting is equivalent to the existence of a congruence between a cusp form and an Eisenstein series modulo $\ell$. Via the Hecke-equivariant isomorphism \eqref{eq_iso}, any such congruence in weight $\ell+1$ yields an Eisenstein congruence in $S_2(\Gamma_0(N\ell),\mathbb{Z}_{(\ell)})$. In the trivial-character case, Mazur’s Eisenstein ideal theory \cite{zbMATH03611460} implies that such a congruence forces  the rational torsion subgroup  $J_0(N\ell)(\mathbb{Q})_{\mathrm{tors}}$ to contain nontrivial $\ell$-primary torsion. For the specific levels under consideration, Yoo’s results \cite{zbMATH07691795} identify the relevant $\ell$-primary torsion subgroup with the $\ell$-primary component of the rational cuspidal divisor class group of $X_0(N\ell)$. Thus, such an Eisenstein congruence can occur only if the rational cuspidal divisor class group of $X_0(N\ell)$ has nontrivial $\ell$-primary torsion. In the cases under consideration, this $\ell$-primary part is trivial with the single exception of $(N, \ell) = (13, 7)$, which is verified by an explicit check using Yoo’s description of the rational cuspidal divisor class group \cite{zbMATH07595062}. Finally, when $N\in\{9,16,18\}$, any Eisenstein congruence associated with a nontrivial quadratic character $\chi$ is algebraically obstructed: the associated generalized Bernoulli numbers, together with the relevant level-raising Euler factors, are divisible only by primes $2$ or $3$. Hence no prime $\ell\ge 5$ can occur. 

For the exceptional cases ($\ell=3$ and $(N,\ell)=(13,7)$), we can directly verify the validity of the congruences via explicit computation. 
\end{proof}

\begin{proof}[Proof of Corollary \ref{coro_type2}]
By applying the operator $U_{\ell}$ to the congruence in Theorem \ref{thm_main2} or Theorem \ref{thm_main3}, and noting that $U_{\ell}$ is an involution modulo $\ell$ under our conditions (Proposition \ref{prop_ul^2}), we obtain
\begin{equation}\label{eq_conl^2}
\chi(\ell)\cdot\prod_{\delta\mid N}(q^{\delta};q^{\delta})^{r_{\delta}}_{\infty}\sum_{n\geq 0}a(\ell^2 n-\tfrac{t}{w}(\ell^2-1))q^n\equiv fg-\left(\frac{-sw}{\ell}\right)\cdot[f(\tau),g(\tau)]_{(\ell-1)/2}\pmod{\ell} .
\end{equation}
Now returning to the explicit expression of $[f,g]_v$, $v=\frac{\ell-1}{2}$. We observe that $v + 1/2 = \ell/2 \equiv 0 \pmod \ell$. Thus, the first binomial coefficient reduces to $\binom{0}{v-j} \pmod l$, which vanishes for all $j < v$. Thus we obtain
\begin{align*}
[f,g]_v&\equiv (-w)^{-v}\sum_{\substack{n\geq 0\\ m\in\mathbb{Z}}}(t-wn)^va(n)b(m)q^{n+\frac{m^2-t}{w}}\\
&\equiv \left(\frac{-w}{\ell}\right) \cdot g \sum_{n\geq 0} \left(\frac{t-wn}{\ell}\right)a(n)q^{n-\frac{t}{w}}.
\end{align*}
Substituting the above congruence into \eqref{eq_conl^2} and canceling $g$ from both sides, the proof of the first assertion is completed by comparing the power series on both sides.

For the second assertion, we follow the method used by Ahlgren and Kim in \cite{zbMATH06865877}. From the above proof and the fact that $U_{\ell}$ is an involution modulo $\ell$,  the congruence 
\[
\sum_{n\geq 0}a(\ell n-\tfrac{t}{w}(\ell^2-1))q^n \equiv 0 \pmod \ell
\]
implies that for each integer $n \ge 0$, we have
\begin{equation}
    \left(1-\left(\frac{s(t-wn)}{\ell}\right)\right)a(n) \equiv 0 \pmod \ell.
\end{equation}
Fix an integer $N > 0$. We construct a sequence of $N$ integers $n_j$ and corresponding distinct primes $p_j$ as follows: If $t = 0$, then $w = 1$. We choose $N$ distinct odd primes $p_j \nmid s$, and set $n_j := p_j$. If $t \neq 0$, by Dirichlet's theorem on arithmetic progressions, we choose $N$ distinct primes $p_j \equiv -t \pmod w$ with $p_j \nmid s$, and set $n_j := (p_j + t)/w$.

In both cases, substitution yields $s(t-wn_j) = -s p_j$. Let $E_N$ be the finite set of primes dividing the non-zero integer $\prod_{j=1}^N a(n_j)$. For any prime $\ell \in S \setminus E_N$, we must have $\left(\frac{-s p_j}{\ell}\right) = 1$, which is equivalent to
\begin{equation}\label{eq_primecon}
    \left(\frac{p_j}{\ell}\right) = \left(\frac{-s}{\ell}\right) \quad \text{for all } 1 \le j \le N.
\end{equation}
Since $p_1, \dots, p_N$ are distinct primes, these $N$ quadratic residue conditions are mutually independent. By  the Chebotarev density theorem, the number of primes $\ell \le X$ satisfying all $N$ conditions is asymptotic to $2^{-N} X / \log X$. 

Any prime $\ell \in S$ either belongs to the finite set $E_N$ or satisfies the conditions in \eqref{eq_primecon}. Thus, the upper density of $S$ is bounded by
\[
    \limsup_{X \to \infty} \frac{\#\{\ell \le X : \ell \in S\}}{X / \log X} \le \limsup_{X \to \infty} \frac{\# E_N}{X / \log X} + 2^{-N} = 2^{-N}.
\]
Since $N$ can be chosen arbitrarily large, letting $N \to \infty$ yields
\[
    \limsup_{X \to \infty} \frac{\#\{\ell \le X : \ell \in S\}}{X / \log X} \le 0.
\]
\end{proof}

\section{Examples}\label{section_ex}
\subsection{Examples of Type I}\label{section_3.1}
We establish the following lemma to prove Corollary \ref{coro_inconforpar}.
\begin{lemma}\label{lemma_valcusp}
For a prime $\ell \ge 5$, let $v = (\ell-1)/2$ and $\alpha\in \mathbb{Z}_{(\ell)}$. Let $f(q) = q^\alpha (a_0 + a_1 q + \dots)$ and $g(q) = 1/f(q)$ be formal power series of weights $a=-1/2$ and $b=1/2$, respectively.  
If $a_0 a_1 \not\equiv 0 \pmod \ell$, the $q^0$ term of $[f, 1/f]_v$ modulo $\ell$ is given by
\[
\binom{2v-2}{v} \alpha^v\equiv \left(\frac{-\alpha}{\ell}\right) \frac{3}{8} \pmod \ell.
\]
The $q^1$ term of $[f, 1/f]_v$ modulo $\ell$ is given by (up to a constant $a_1/a_0$)
\begin{equation}\label{eq_C_v1}
(\alpha+1)^2 (\alpha+1)^v - \left(2\alpha^2 + \frac{3}{2}\alpha\right)\alpha^v + \left(\alpha^2 - \frac{1}{2}\alpha - \frac{1}{2}\right)(\alpha-1)^v \pmod \ell.
\end{equation}
\end{lemma}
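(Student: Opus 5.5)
The plan is to compute the first two coefficients of $[f,1/f]_v$ directly from the definition \eqref{eq_RC_def}, with $a=-1/2$, $b=1/2$ and $\D=q\,\tfrac{d}{dq}$, and then reduce modulo $\ell$. Normalize $f=a_0q^{\alpha}(1+cq+O(q^2))$ with $c=a_1/a_0$. Then
\[
g=1/f=a_0^{-1}q^{-\alpha}\bigl(1-cq+O(q^2)\bigr).
\]
Since $\D$ acts on $q^{\beta}$ as multiplication by $\beta$, we get
\[
\D^jf=a_0q^{\alpha}\bigl(\alpha^j+c(\alpha+1)^jq+\cdots\bigr),\qquad
\D^{v-j}g=a_0^{-1}q^{-\alpha}\bigl((-\alpha)^{v-j}-c(1-\alpha)^{v-j}q+\cdots\bigr).
\]
The factors $a_0$ cancel in the product. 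The $q^0$ coefficient of $[f,g]_v$ is therefore $\sum_j(-1)^jB_j\,\alpha^j(-\alpha)^{v-j}$, where $B_j:=\binom{v-3/2}{v-j}\binom{v-1/2}{j}$. The $q^1$ coefficient is $c$ times
\[
\sum_j(-1)^jB_j\bigl[(\alpha+1)^j(-\alpha)^{v-j}-\alpha^j(1-\alpha)^{v-j}\bigr].
\]
This explains the phrase ``up to the constant $a_1/a_0$'', and the hypothesis $a_0a_1\not\equiv0$ is only needed so that this normalization makes sense $\ell$-integrally.

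For the constant term, the signs combine to $(-\alpha)^v\sum_j B_j$. The Vandermonde identity gives $\sum_jB_j=\binom{2v-2}{v}$, so the constant term is $(-1)^v\binom{2v-2}{v}\alpha^v$. I have to track the sign $(-1)^v$ carefully against the form stated in the lemma. Modulo $\ell$, we have $2v-2=\ell-3\equiv-3$, so
\[
\binom{2v-2}{v}\equiv\binom{-3}{v}=(-1)^v\frac{(v+1)(v+2)}{2}.
\]
Using $v\equiv-1/2$, this is $\equiv(-1)^v\cdot\frac{3}{8}$, exactly as in \eqref{eq_valca}. Combined with $\alpha^v\equiv\left(\frac{\alpha}{\ell}\right)$ and $(-1)^v=\left(\frac{-1}{\ell}\right)$, this yields the Legendre-symbol form $\left(\frac{-\alpha}{\ell}\right)\frac38$.

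For the $q^1$ term, I would first reduce the binomial coefficients modulo $\ell$ before summing. Since $v+\tfrac12=\ell/2\equiv0$, we have $v-\tfrac12\equiv-1$ and $v-\tfrac32\equiv-2$. Hence
\[
\binom{v-1/2}{j}\equiv(-1)^j,\qquad \binom{v-3/2}{v-j}\equiv(-1)^{v-j}(v-j+1).
\]
These congruences are legitimate because the indices $j,\,v-j$ are less than $\ell$, so the only denominators are $j!$ and $(v-j)!$, which are $\ell$-units. Each piece of the bracket then has the shape
\[
T(x,y):=\sum_{k=0}^{v}(k+1)\,x^{v-k}(-y)^k
\]
up to a global sign, with $(x,y)=(\alpha+1,-\alpha)$ and $(x,y)=(\alpha,1-\alpha)$. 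This is a truncated derivative of a geometric series. With $z=-y/x$ it has the closed form
\[
x^v\,\frac{1-(v+2)z^{v+1}+(v+1)z^{v+2}}{(1-z)^2}.
\]
I would then substitute $v+1\equiv\tfrac12$ and $v+2\equiv\tfrac32$. In both cases the denominator $(x+y)^2$ equals $1$, so the expression becomes a polynomial combination of $x^v$ and $y^{v}$. The first piece gives terms in $(\alpha+1)^v$ and $\alpha^v$, and the second gives terms in $\alpha^v$ and $(\alpha-1)^v$.

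The main obstacle is the bookkeeping in this last step. I need to collect the coefficients of $(\alpha+1)^v$, $\alpha^v$ and $(\alpha-1)^v$ from the two pieces, keep the signs $(-1)^v$ consistent, and possibly absorb a common sign or factor into the ``up to a constant'' normalization. I expect to land on $(\alpha+1)^2$, $-(2\alpha^2+\frac32\alpha)$ and $\alpha^2-\frac12\alpha-\frac12$ as in \eqref{eq_C_v1}. I would verify the final coefficients by checking a small case such as $\ell=5$ numerically.
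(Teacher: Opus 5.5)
Your route is the paper's: expand $\D^jf\cdot\D^{v-j}g$ to first order, reduce $\binom{v-1/2}{j}\equiv(-1)^j$ and $\binom{v-3/2}{v-j}\equiv(-1)^{v-j}(v-j+1)$, then sum the arithmetico-geometric series in closed form, using that the two bases differ by $1$. The $q^1$ part is fine and leaves no sign to absorb. The relative sign is forced: the coefficient is congruent to $\frac{a_1}{a_0}\bigl(T(\alpha+1,-\alpha)-T(\alpha,1-\alpha)\bigr)$. Your two $T$ terms are exactly the paper's $S(\alpha+1,\alpha)$ and $S(\alpha,\alpha-1)$, and substituting $v+1\equiv\frac12$, $v+2\equiv\frac32$ gives \eqref{eq_C_v1} on the nose.

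The constant term is where your argument breaks. You correctly obtain $(-\alpha)^v\binom{2v-2}{v}=(-1)^v\binom{2v-2}{v}\alpha^v$, and correctly $\binom{2v-2}{v}\equiv(-1)^v\cdot\frac38$. But these combine to $\frac38\alpha^v\equiv\left(\frac{\alpha}{\ell}\right)\frac38$, not $\left(\frac{-\alpha}{\ell}\right)\frac38$. Your last sentence silently drops a factor $(-1)^v$, and the $q^0$ claim has no ``up to a constant'' clause that could absorb it.

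What your computation actually shows is that the first assertion of the lemma is off by $\left(\frac{-1}{\ell}\right)$. The congruence $\binom{2v-2}{v}\alpha^v\equiv\left(\frac{-\alpha}{\ell}\right)\frac38$ is true, but $\binom{2v-2}{v}\alpha^v$ is not the $q^0$ term. For $\ell=7$, that term (which depends only on $\alpha$) is $-4\alpha^3$, not $\binom{4}{3}\alpha^3=4\alpha^3$. The paper's proof makes the matching slip: it writes $\D^jf\cdot\D^{v-j}g=(-1)^j\alpha^v+O(q)$, whereas the leading coefficient is $\alpha^j(-\alpha)^{v-j}=(-1)^{v-j}\alpha^v$.

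Your sign is the consistent one. Take $f=1/\eta$, so $\alpha=-1/24$ and $w=24$. The left side of Theorem \ref{thm_main1} has constant term $\chi(\ell)p(-\delta_\ell)=0$. This forces the $q^0$ term of $[f,g]_v$ to be $\left(\frac{-24}{\ell}\right)\frac38=\left(\frac{\alpha}{\ell}\right)\frac38$; setting $n=0$ in \eqref{eq_rcfg} gives the same value.

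So you should state the corrected value $(-\alpha)^v\binom{2v-2}{v}\equiv\left(\frac{\alpha}{\ell}\right)\frac38$ rather than force agreement with the statement. The later arguments are unaffected. Corollary \ref{coro_inconforpar} uses only the $q^1$ term. Corollary \ref{coro_inconforTypeI} only evaluates at cusps where the order of $f$ is $0$, and there the Legendre symbol vanishes in either convention.
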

\begin{proof}
It is clear that 
\[
\D^j f\cdot \D^{v-j}g=(-1)^j\alpha^v+O(q),
\] 
The first assertion can be obtained immediately using calculations similar to those in the proofs of Lemma \ref{lemma_fgUl} and Theorem \ref{thm_main1}. 

 Since $g(q) = q^{-\alpha}(a_0^{-1} - a_1 a_0^{-2} q + \dots)$, extracting the $q^1$-coefficient of the product $\D^r f \cdot \D^{v-r} g$ yields
\[
\left( \D^r f \cdot \D^{v-r} g \right)_{q^1} = \frac{a_1}{a_0} \left[ (\alpha+1)^r (-\alpha)^{v-r} - \alpha^r (1-\alpha)^{v-r} \right].
\]
By the definition of the Rankin-Cohen bracket, the $q^1$ term  is given by (up to a constant $a_1/a_0$)
\[
P_v(\alpha):= \sum_{r=0}^v (-1)^r \binom{v-3/2}{v-r} \binom{v-1/2}{r} \left[ (\alpha+1)^r (-\alpha)^{v-r} - \alpha^r (1-\alpha)^{v-r} \right].
\]
Using $v \equiv -1/2 \pmod \ell$, we have $\binom{v-1/2}{r} \equiv (-1)^r$ and $\binom{v-3/2}{v-r} \equiv (-1)^{v-r}(v-r+1)$. Thus we obtain
\[
P_v(\alpha) \equiv S(\alpha+1, \alpha) - S(\alpha, \alpha-1) \pmod \ell,
\]
where $S(X,Y) = \sum_{r=0}^v (v-r+1) X^r Y^{v-r}$. Utilizing the standard  identity $(Y-X)^2 S(X,Y) = X^{v+2} - (v+2)XY^{v+1} + (v+1)Y^{v+2}$ and reducing the coefficients modulo $\ell$ (noting $Y-X = -1$), we obtain the closed form claimed in the lemma.
\end{proof} 

\begin{proof}[Proof of Corollary \ref{coro_inconforpar}]
We proceed by contradiction. From the discussion preceding Corollary \ref{coro_inconforpar}, we know that $f_{\ell}\equiv 0\pmod\ell$, which implies that the  $q^1$ term of its Fourier expansion is 0 modulo $\ell$. Substituting $\alpha = -1/24$ in \eqref{eq_C_v1}, we get
\[
P_v\left(-\frac{1}{24}\right) \equiv \frac{1}{576} \left[ 529 \left(\frac{23}{24}\right)^v + 34 \left(-\frac{1}{24}\right)^v - 275 \left(-\frac{25}{24}\right)^v \right] \pmod \ell.
\]
For $\ell \neq 5$, Fermat's Little Theorem implies $(-25/24)^v \equiv (-1)^v (1/24)^v \pmod \ell$. Factoring out $(1/24)^v$, the condition $P_v(-1/24) \equiv 0 \pmod \ell$ is  equivalent to the vanishing of the numerator:
\[
529 \cdot 23^v - 241 \cdot (-1)^v \equiv 0 \pmod \ell.
\]
Let $\epsilon_1 = 23^v \in \{\pm 1\}$ and $\epsilon_2 = (-1)^v \in \{\pm 1\}$. The possible absolute values of $529 \epsilon_1 - 241 \epsilon_2$ over $\mathbb{Z}$ are limited to:
\begin{align*}
|529 - 241| &= 288 = 2^5 \times 3^2, \\
|529 + 241| &= 770 = 2 \times 5 \times 7 \times 11.
\end{align*}
Consequently, the only primes $\ell \ge 5$ that can divide this expression are $5, 7,$ and $11$, and the proof is completed. It is worth noting that the constants 288 and 770 are precisely the two constants that appear in \cite{zbMATH02001026}.
\end{proof}

Before formally proving Corollary \ref{coro_inconforTypeI}, let us examine two typical examples.

An overpartition of $n$ is a non-increasing sequence of natural numbers whose sum is $n$ in which the first occurrence of a number may be overlined. We denote the number of overpartitions of $n$ by $\overline{p}(n)$. We have the generating function
\[
\sum_{n\geq 0}\overline{p}(n)q^n=\frac{(-q;q)_{\infty}}{(q;q)_{\infty}}=\frac{\eta(2\tau)}{\eta(\tau)^2}.
\]
Our theorem implies that 
\begin{corollary}\label{coro_overp}
For every prime $\ell\geq 5$, there is a modular form $h\in M_{\ell-1}(\Gamma_0(2))$ such that
\[
\frac{(q^{\ell};q^{\ell})_{\infty}^2}{(q^{2\ell};q^{2\ell})_{\infty}}\cdot \sum_{n\geq 0}\overline{p}(\ell n)q^n-1\equiv  h \big|U_{\ell}\pmod\ell,
\]
and 
\[
\sum_{n\geq 0}\overline{p}(\ell n)q^n\not\equiv 1\pmod{\ell}.
\]
\end{corollary}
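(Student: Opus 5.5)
We apply Theorem \ref{thm_main1} to $f(\tau)=\eta(2\tau)/\eta(\tau)^2$, whose reciprocal $g=\eta(\tau)^2/\eta(2\tau)=\sum_{m\in\mathbb{Z}}(-1)^m q^{m^2}$ appears in Mersmann's list. We take $N=2$, $r_1=2$, $r_2=-1$. Then $t/w=(2-2)/24=0$, so $t=0$ and $w=1$. The coefficients are $b(m)=(-1)^m$, and for odd $\ell$ we have $b(\ell m)=(-1)^{\ell m}=(-1)^m$, so $\chi(\ell)=1$. Theorem \ref{thm_main1} then gives
\[
\frac{(q^{\ell};q^{\ell})_{\infty}^2}{(q^{2\ell};q^{2\ell})_{\infty}}\sum_{n\ge0}\overline{p}(\ell n)q^n\equiv 1-\left(\frac{-1}{\ell}\right)\frac{8}{3}[f,g]_{(\ell-1)/2}\big|U_\ell \pmod\ell,
\]
with $[f,g]_{(\ell-1)/2}\in M_{\ell-1}(\Gamma_0(2))$ by Lemma \ref{lemma_RC_valuation}. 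Setting $h:=-\left(\frac{-1}{\ell}\right)\frac83[f,g]_{(\ell-1)/2}$ proves the first assertion. The coefficients of $h$ are $\ell$-integral because $f,g$ have integer coefficients and the binomial coefficients in \eqref{eq_RC_def} are $\ell$-integral: their denominators are powers of $2$.

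For the non-congruence, suppose $\sum\overline{p}(\ell n)q^n\equiv1$. Then the left side becomes $(q^\ell;q^\ell)^2_\infty/(q^{2\ell};q^{2\ell})_\infty-1$, which equals $g(\ell\tau)-1$. Since $E_{\ell-1}\equiv1$, we obtain
\[
\bigl(E_{\ell-1}+h\bigr)\big|U_\ell\equiv g(\ell\tau)=g\big|U_\ell\cdot\ldots
\]
This is awkward, so we argue with $q$-expansions instead. The congruence implies $(1+h)|U_\ell\equiv g(\ell\tau)$. By Lemma \ref{lemma_fgUl} (the $t=0$ case), this is the same as saying $\bigl(fg(\ell^2\tau)\bigr)|U_\ell\equiv g(\ell\tau)$, i.e.\ $f(\tau)g(\ell^2\tau)-g(\ell^2\tau)$ has no exponents divisible by $\ell$. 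A cleaner route is to extract a single low-order coefficient. The coefficient of $q^1$ on the left is $\overline{p}(\ell)+(\text{coefficient of }q\text{ in }g(\ell\tau))=\overline{p}(\ell)$, since $g(\ell\tau)$ has no $q^1$ term for $\ell\ge5$. So the congruence requires $\overline{p}(\ell)\equiv0\pmod\ell$.

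The planned approach is to show $\overline{p}(\ell)\not\equiv0\pmod\ell$ for every prime $\ell\ge5$, or, if that fails for some $\ell$, to test a further coefficient. To compute modulo $\ell$, we use that the $q^1$ coefficient of $h|U_\ell$ equals the $q^\ell$ coefficient of $h$. Following the idea of Lemma \ref{lemma_valcusp}, it is better to compute low coefficients of $[f,g]_v$ itself modulo $\ell$, in the spirit of the proof of Corollary \ref{coro_type2}. Since $v+\tfrac12\equiv0$ in the binomial $\binom{b+v-1}{j}=\binom{v-1/2}{j}$, only $j=0$ survives modulo $\ell$ in \eqref{eq_RC_def}. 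This gives $[f,g]_v\equiv\binom{v-3/2}{v}f\cdot \D^v g\equiv\left(\frac{-1}{\ell}\right)\frac38\, f\cdot\sum_m(-1)^m m^{\ell-1}q^{m^2}$. Hence $1+h\equiv f\cdot\sum_{\ell\mid m}(-1)^mq^{m^2}=f(\tau)g(\ell^2\tau)$, which recovers the identity above.

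The main obstacle is to turn this into an explicit coefficient that provably does not vanish. We instead argue as in the paragraph before Corollary \ref{coro_inconforpar}. The hypothesised congruence says the form $F_\ell:=E_{\ell-1}+h-G$ lies in the kernel of $U_\ell$ modulo $\ell$, where $G\in M_{\ell-1}(\Gamma_0(2))$ is any form congruent to $g(\ell\tau)\equiv g^\ell$. We would take $G:=g^{2\ell-2}\cdot(\text{weight-correcting form})$, which is the step we are least sure of. By Jochnowitz-type injectivity of $U_\ell$ in weight $\ell-1$ (level prime to $\ell$), this gives $F_\ell\equiv0$. We then compare the Eisenstein parts at the cusps $\infty$ and $0$, which we compute via Lemma \ref{lemma_valcusp} at each cusp using the expansions of $f,g$ there. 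At $\infty$ we have $\alpha=0$; at $0$ we have $f|\gamma\sim\eta(\tau/2)/\eta(\tau)^2$ with $\alpha=1/48$, whose constant term is $\left(\frac{-1/48}{\ell}\right)\frac38=\left(\frac{-3}{\ell}\right)\frac38$. This should be incompatible with the value forced by $E_{\ell-1}$ and $G$. If the constant terms do not separate the two sides, we fall back on the $q^1$ coefficient \eqref{eq_C_v1} at $\alpha=1/48$. That coefficient is a combination of $(49/48)^v,(1/48)^v,(-47/48)^v$ whose numerator takes only finitely many integer values, and we check by hand that none of them is divisible by any prime $\ell\ge5$.
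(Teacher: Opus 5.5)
Your first assertion is correct and matches the paper: apply Theorem~\ref{thm_main1} with $f=\eta(2\tau)/\eta(\tau)^2$, $t=0$, $w=1$, $\chi(\ell)=1$, and take $h=-\left(\frac{-1}{\ell}\right)\frac83[f,g]_{(\ell-1)/2}$. The non-congruence, however, is not proved, and each route you sketch breaks.

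\emph{The coefficient test.} It fails already at $\ell=13$, since $\overline{p}(13)=728=2^3\cdot 7\cdot 13$. ``Test a further coefficient'' is not an argument that works uniformly in $\ell$.

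\emph{The bracket collapse.} Collapsing the bracket to its $j=0$ term is false in weight $a=-1/2$. Here $b+v-1=v-\frac12\equiv-1$ and $a+v-1=v-\frac32\equiv-2\pmod\ell$. So $\binom{v-1/2}{j}\equiv(-1)^j$ and $\binom{v-3/2}{v-j}\equiv(-1)^{v-j}(v-j+1)$, and neither vanishes. The collapse in the proof of Corollary~\ref{coro_type2} is special to $a=3/2$, where $a+v-1\equiv 0$. Your constant is also off: $\binom{v-3/2}{v}\equiv\left(\frac{-1}{\ell}\right)\frac12$, not $\left(\frac{-1}{\ell}\right)\frac38$. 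Consequently $1+h\equiv f(\tau)g(\ell^2\tau)$ holds only after applying $U_\ell$, which is just Lemma~\ref{lemma_fgUl}. As $q$-series it is false: for $\ell=5$ the $q^1$-coefficients are $4$ and $2$ modulo $5$.

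\emph{The auxiliary form $G$.} Most seriously, a form $G\in M_{\ell-1}(\Gamma_0(2))$ with $G\equiv g(\ell\tau)$ cannot exist (see below). Even if it did, $G|U_\ell\equiv g(\tau)\neq g(\ell\tau)$, so $E_{\ell-1}+h-G$ would not lie in $\ker U_\ell$.

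\emph{The cusp data.} Your value at the cusp $0$ is wrong: $f|S\propto\eta(\tau/2)/\eta(\tau)^2$ has leading exponent $\frac1{48}-\frac1{12}=-\frac1{16}$, not $\frac1{48}$. More fundamentally, comparing cusp values via the $q$-expansion principle needs both sides of the hypothesised congruence to lie in $M_{\ell-1}(\Gamma_0(2))$. Here the right-hand side is the weight $1/2$ theta series $g(\ell\tau)$. This is exactly why the paper uses cusp values only when $t\neq 0$, where the hypothesis reads $\equiv 0$.

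\emph{The missing idea.} The hypothesis is impossible for a lacunarity reason, and this is also why no such $G$ exists.
\begin{enumerate}
\item If $\sum\overline{p}(\ell n)q^n\equiv 1$, then $\theta_1(\ell\tau)-1\equiv h|U_\ell$, where $\theta_1=g=\sum_n(-1)^nq^{n^2}$.
\item Applying $U_\ell$ once more gives $\theta_1-1\equiv h|U_\ell^2$. Since $h$ is $\ell$-integral of weight $\ell-1\ge 4$, we have $U_\ell\equiv T_\ell$. Hence $\theta_1-1$ is congruent modulo $\ell$ to an element of $M_{\ell-1}(\Gamma_0(2))$.
\item For a prime $p\nmid 2\ell$, one has $(\theta_1-1)|U_p=(\theta_1-1)|V_p=\theta_1(p\tau)-1$. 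Since $p^{\ell-2}\equiv p^{-1}$, this gives $(\theta_1-1)|T_p\equiv(1+p^{-1})(\theta_1(p\tau)-1)\pmod\ell$.
\item Choose $p\not\equiv -1\pmod\ell$ and iterate. This yields the forms $\theta_1(p^r\tau)-1=-2q^{p^r}+\cdots$ for $r\ge 0$, which are linearly independent over $\mathbb{F}_\ell$.
\item This contradicts the finite-dimensionality of the $T_p$-stable reduction of $M_{\ell-1}(\Gamma_0(2))$ modulo $\ell$.
\end{enumerate}
This is the paper's argument. It needs no information about $h$ beyond its weight, level and $\ell$-integrality.
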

\begin{proof}
The first assertion follows immediately from Theorem \ref{thm_main1}. 
For the second assertion, we employ a proof by contradiction. This implies that
\[
\theta_1(\tau)-1 \equiv h \big| U_{\ell^2} \pmod\ell
\]
for some $h \in M_{\ell-1}(\Gamma_0(2))$, where $\theta_1(\tau) = \sum_{n \in \mathbb{Z}} (-1)^n q^{n^2}$. However, such a congruence between an integral weight modular form and a lacunary theta function cannot hold. 

Let $f(\tau) = \theta_1(\tau) - 1$. For any odd prime $p$, the $U_p$ operator acts on $f(\tau)$ by $f(\tau)\big|U_p = f(p\tau)$. By Dirichlet's theorem, we can choose a prime $p \nmid 2\ell $ such that $1+p^{-1} \not\equiv 0 \pmod\ell$. Modulo $\ell$, the Hecke operator $T_{p} = U_p + p^{\ell-2} V_p$ yields $f(\tau)\big|T_p \equiv (1+p^{-1})f(p\tau)$. Since $1+p^{-1} \not\equiv 0 \pmod\ell$, the Hecke orbit of $f(\tau)$ under $T_p$ spans the infinite sequence of forms $\{f(p^r\tau)\}_{r\geq 0}$. The strictly increasing order of vanishing at $\infty$ forces these forms to be linearly independent over $\mathbb{F}_\ell$, contradicting the finite dimensionality of $M_{\ell-1}(\Gamma_0(2), \mathbb{F}_{\ell})$.
\end{proof}
Note that in this case, we have not utilized any information about $h$. However, since the values of $h$ at each cusp are computable by Lemma \ref{lemma_valcusp}, we can determine its Eisenstein part using the method of undetermined coefficients. The Eisenstein part of the Rankin-Cohen bracket $[\eta(2\tau)/\eta(\tau)^2,\eta(\tau)^2/\eta(2\tau)]$ can be determined explicitly:
\[
\binom{\ell-3}{(\ell-1)/2}\frac{(-1)^{(\ell-1)/2}}{2^{(\ell-1)/2}(2^{\ell-1}-1)}\left(E_{\ell-1}(\tau)-E_{\ell-1}(2\tau)\right).
\]
We can use this to obtain some explicit congruences. Take $\ell=5$ as an example, where the cusp form space is zero, thus $h$ equals to its Eisenstein part. We note that this Eisenstein series can be expressed as an eta-quotient:
\[
E_4(\tau)+h(\tau)=\frac{\eta(\tau)^{16}}{\eta(2\tau)^8},
\]
thus we obtain
\[
\sum_{n\geq 0}\overline{p}(5 n)q^n\equiv \frac{\eta(\tau)^6}{\eta(2\tau)^3}\pmod 5,
\]
which is the congruence given by Treneer \cite[(5.14)]{zbMATH05057203}.

In handling the case where $t\neq 0$, our explicit construction plays a crucial role. We require the following two results concerning modular forms.
\begin{proposition}[$q$-expansion principle (Katz \cite{zbMATH03425708})] Let $f \in M_k(\Gamma_0(N))$ be a modular form whose $q$-expansion at $i\infty$ has coefficients in
$\mathbb{Z}_{(\ell)}$, with $\ell \nmid N$. If the $q$-expansion of $f$ at $i\infty$ is congruent to 0 modulo $\ell$, then for every cusp $\kappa$, the $q$-expansion of $f$ at $\kappa$ is also congruent to 0 modulo any prime $\lambda\mid \ell$ in the coefficient field of that expansion.
\end{proposition}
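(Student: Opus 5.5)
The plan is the geometric argument of Deligne--Rapoport and Katz; I will only sketch it, since the statement is quoted from \cite{zbMATH03425708}. The idea is to read $f$ as an algebraic object, a section of a line bundle on a model of $X_0(N)$ over a ring in which $\ell$ is not inverted. The vanishing of the expansion at $i\infty$ modulo $\ell$ then says that the reduction of this section vanishes on a formal neighbourhood of one point of the special fibre. Irreducibility of that fibre should then force the reduction to vanish identically, and in particular at every other cusp.

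First I would fix $R=\mathbb{Z}[\zeta_N,1/N]$, localized at a prime $\lambda\mid\ell$; this is legitimate because $\ell\nmid N$. By Deligne--Rapoport (or Katz--Mazur), the coarse moduli space $X_0(N)_R$ is proper and flat over $R$. Its special fibre $X_0(N)_{\mathbb{F}_\lambda}$ is smooth and geometrically irreducible, again because $\ell\nmid N$. All cusps are defined over $R$. Modular forms of weight $k$ with coefficients in $R$ are identified with sections of $\omega^{\otimes k}$ over this model. Katz's Tate-curve formalism then attaches to each cusp $\kappa$ an expansion in $R[[q_\kappa^{1/h}]]$, where $h$ is the width of $\kappa$.

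The second step is the compatibility statement. The analytic expansion of $f\big|_k\gamma_\kappa$ should agree with the algebraic Tate-curve expansion at $\kappa$, up to a unit of $R$ and the usual normalizing constants, which involve only powers of $\zeta_N$, roots of $h$, and $1/N$. In particular, if the expansion at $i\infty$ lies in $\mathbb{Z}_{(\ell)}[[q]]$, then $f$ extends to a section over $R_\lambda$, and each $f\big|_k\gamma_\kappa$ has $\lambda$-integral coefficients. This is the step I expect to be the main obstacle. One has to check that the change of cusp introduces no denominator divisible by $\ell$, and this is precisely where the hypothesis $\ell\nmid N$ enters. My first attempt would be to move the cusp $\kappa$ to $i\infty$ using the Atkin--Lehner-type automorphisms of the moduli problem, which act by changing the level structure over $R$. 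Where that is unavailable, I would compare the two expansions through the Tate curve $\mathrm{Tate}(q)$ equipped with the level structure $\langle\zeta_N^a q^{b/N}\rangle$ corresponding to $\kappa$.

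Finally, reduce modulo $\lambda$. The image $\bar f$ of $f$ is a section of $\omega^{\otimes k}$ on the smooth, irreducible curve $X_0(N)_{\mathbb{F}_\lambda}$. By hypothesis its formal expansion at the point $\infty$ is zero, so $\bar f$ vanishes on a formal neighbourhood of $\infty$. Since $\omega^{\otimes k}$ is locally free and the fibre is integral, the identity principle in algebraic geometry gives $\bar f\equiv 0$. Hence the formal expansion of $\bar f$ at every cusp $\kappa$ is zero, and by the compatibility of the second step this says exactly that the analytic expansion of $f$ at $\kappa$ is $\equiv 0\pmod\lambda$. Equivalently, $f/\lambda$ is again $\lambda$-integral at every cusp.
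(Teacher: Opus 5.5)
The paper gives no proof of this proposition (it only cites Katz), and your sketch is the standard Deligne--Rapoport/Katz argument behind that citation: good reduction and irreducibility of the special fibre of $X_0(N)$ at $\lambda$ because $\ell\nmid N$, plus comparison of analytic and Tate-curve expansions at the cusps. So it is correct in outline, with two points to tighten. First, $\omega$ does not exist on the coarse space $X_0(N)_R$, so work on the moduli stack, or (as Katz does) add an auxiliary level $\Gamma(M)$ with $M\ge 3$ and $\ell\nmid M$, or read $\omega^{\otimes k}$ as $\pi_*\omega^{\otimes k}$, which is still invertible on the regular surface $X_0(N)_R$ for even $k$. Second, the $\lambda$-integrality of $f$ in your second step needs no cusp-by-cusp denominator check; Atkin--Lehner moves would not reach cusps such as $1/2$ on $X_0(4)$ anyway. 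Instead, apply your third step to $\lambda^m f$, with $m\ge 0$ minimal such that $\lambda^m f$ extends over $R$: if $m>0$, the reduction has zero expansion at $\infty$ and hence vanishes, contradicting minimality. Integrality of the expansions at all $R$-rational cusps is then automatic, up to unit normalizing constants.
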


The following result provides the formulas for the values of a modular form under the action of Hecke operators at various cusps, which is a special case of a result due to Martin \cite[Proposition 4]{zbMATH00963239}.
\begin{proposition}
Let $f \in M_k(\Gamma_0(N))$ be a modular form, and let $\ell$ be a prime such that $\ell \nmid N$ and $\ell^{*}$ be a prime such that $\ell \ell^{*}\equiv 1\pmod{N}$. Let $a_0\left(f, \frac{a}{c}\right)$ denote the value of the modular form at the cusp $\frac{a}{c}$. We have
\begin{equation}\label{eq_hecek}
a_0\left(f | T_\ell, \frac{1}{c}\right) = a_0\left(f, \frac{1}{\ell c}\right) + \ell^{k-1} a_0\left(f, \frac{1}{\ell^{*} c}\right).
\end{equation}
\end{proposition}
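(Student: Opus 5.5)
This is the constant-term case of the standard double-coset computation for $T_\ell$, carried out at a general cusp. Take $f\in M_k(\Gamma_0(N))$ with $\ell\nmid N$. Write
\[
f\big|_k T_\ell=\ell^{k/2-1}\sum_{\beta} f\big|_k\beta ,
\]
where $\beta$ runs over the $\ell+1$ coset representatives of $\Gamma_0(N)\backslash\Gamma_0(N)\begin{pmatrix}1&0\\0&\ell\end{pmatrix}\Gamma_0(N)$. To read off the value at the cusp $1/c$, we slash by $\gamma_c=\begin{pmatrix}1&0\\c&1\end{pmatrix}$ and take the constant term of $\sum_\beta f|_k\beta\gamma_c$.

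The first step is to fix the representatives:
\[
\beta_j=\begin{pmatrix}1&j\\0&\ell\end{pmatrix},\quad 0\le j<\ell,
\qquad
\beta_\infty=\sigma_\ell\begin{pmatrix}\ell&0\\0&1\end{pmatrix},
\]
where $\sigma_\ell\in\SL_2(\mathbb{Z})$ satisfies $\sigma_\ell\equiv\begin{pmatrix}\ell^{-1}&0\\0&\ell\end{pmatrix}\pmod N$. This choice is possible because $\ell\nmid N$.

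The second step is to factor each product as $\beta\gamma_c=\gamma'\,\begin{pmatrix}A&B\\0&D\end{pmatrix}$ with $\gamma'\in\SL_2(\mathbb{Z})$ and $AD=\ell$. The upper-triangular factor only rescales $\tau$ and adds a translation. Its effect on the constant term of $f|_k\gamma'$ is therefore the factor $(A/D)^{k/2}$, together with the outer normalization $\ell^{k/2-1}$.

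- For $\beta_j$ with $1+jc\not\equiv 0\pmod\ell$, the factorization has $A=1$, $D=\ell$, and $\gamma'$ sends $\infty$ to the cusp $(1+jc)/(\ell c)$. Its contribution is $\ell^{-1}a_0(f,(1+jc)/(\ell c))$.
- For the single $j$ with $1+jc\equiv0\pmod\ell$, we have $A=\ell$, $D=1$. The contribution is $\ell^{k-1}a_0(f,\cdot)$ at a cusp of the form $*/c$.
- For $\beta_\infty$, the product $\beta_\infty\gamma_c$ is congruent modulo $N$ to a matrix whose first column is $(\ell^{-1}\cdot\ell,\ \ell c)$ up to scalar. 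So $A=1$, $D=\ell$, and the cusp is equivalent to $1/(\ell c)$ in the sense of its class under $\Gamma_0(N)$.

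The third step is to identify all these cusps up to $\Gamma_0(N)$-equivalence. Recall that $a/c'$ and $a'/c''$ are equivalent iff $\gcd(c',N)=\gcd(c'',N)=:d$ and $a\equiv a'$ modulo $\gcd(d,N/d)$ after the appropriate unit adjustment. Since $\ell$ is a unit modulo $N$:

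- All the cusps $(1+jc)/(\ell c)$ and $1/(\ell c)$ lie in the class of $1/(\ell c)$. They have the same denominator modulo $N$ and numerators that differ by multiples of $c$. They account for the $\ell$ terms of size $\ell^{-1}$, which sum to $a_0(f,1/(\ell c))$.
- The remaining term gives a cusp with denominator $\equiv \ell^{*}c$ and numerator $1$ after renormalizing. Here we use $\ell\ell^{*}\equiv1\pmod N$ to move the unit $\ell$ from the numerator into the denominator, which identifies it as $1/(\ell^{*}c)$.

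Putting these together gives \eqref{eq_hecek}.

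The main obstacle is the bookkeeping in the third step. One has to track the width of each cusp and the unit multipliers carefully, so that the numerators match modulo $\gcd(d,N/d)$ and the scalars combine to exactly $1$ and $\ell^{k-1}$. Alternatively, since the statement is a special case of Martin's \cite[Proposition 4]{zbMATH00963239}, one can simply specialize his general formula: trivial character, and the $\ell$-factor with $\ell\nmid N$.
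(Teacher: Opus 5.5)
The paper gives no argument of its own for this proposition. It simply quotes the formula as the trivial-character, $\ell\nmid N$ case of Martin's Proposition~4, which is the alternative you mention at the end. Your direct double-coset computation is therefore a genuinely different, self-contained route, and its mechanics are right. Writing $\beta\gamma_c=\gamma'\left(\begin{smallmatrix}A&B\\0&D\end{smallmatrix}\right)$ with $AD=\ell$, the normalisation $\ell^{k/2-1}(A/D)^{k/2}$ gives exactly $\ell^{-1}$ or $\ell^{k-1}$.

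Cusp widths play no role here, since the constant term of $f|_k\gamma'$ is unchanged by $\tau\mapsto(A\tau+B)/D$. The only real bookkeeping is $\Gamma_0(N)$-equivalence. That is cleanest via the criterion that, for reduced fractions, $a/c\sim a'/c'$ if and only if $c'\equiv uc\pmod N$ and $a'\equiv u^{-1}a\pmod{\gcd(c,N)}$ for some $u$ prime to $N$. Taking $u=\ell$ identifies $\ell/c$ with $1/(\ell c)$. Taking $u=\ell^{*}$ identifies $\bigl((1+jc)/\ell\bigr)/c$ with $1/(\ell^{*}c)$.

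There is, however, a gap: your case analysis silently assumes $\ell\nmid c$, while the statement allows arbitrary $c$ (only $\ell\nmid N$ is assumed). If $\ell\mid c$, then $1+jc\equiv 1\pmod\ell$ for every $j$, so there is no exceptional $j$. All $\ell$ matrices $\beta_j$ then have $A=1$, $D=\ell$ and give cusps equivalent to $1/(\ell c)$.

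Moreover, the first column $(\ell,c)$ of $\left(\begin{smallmatrix}\ell&0\\c&1\end{smallmatrix}\right)$ is no longer primitive, so your assertion that $\beta_\infty$ has $A=1$, $D=\ell$ is false in this case. Instead $\left(\begin{smallmatrix}\ell&0\\c&1\end{smallmatrix}\right)=\left(\begin{smallmatrix}1&0\\c/\ell&1\end{smallmatrix}\right)\left(\begin{smallmatrix}\ell&0\\0&1\end{smallmatrix}\right)$. So $\beta_\infty$ contributes $\ell^{k-1}a_0\bigl(f,1/(c/\ell)\bigr)$, and $1/(c/\ell)\sim 1/(\ell^{*}c)$ because $\ell^{*}c\equiv c/\ell\pmod N$ and $\gcd(c/\ell,N)=\gcd(c,N)$.

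The roles of $\beta_\infty$ and the exceptional $\beta_j$ simply swap, and the formula survives, but you need to treat this case explicitly. In the paper's applications $c\mid N$, so $\ell\nmid c$ and the issue does not arise there.
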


Let $\operatorname{pod}(n)$ be the number of partitions of $n$ without repeated odd parts. The generating function of this partition function is a weakly holomorphic modular form of weight $-1/2$:
\[
\sum_{n\geq 0}\operatorname{pod}(n)q^{n-\frac{1}{8}}=\frac{\eta(2\tau)}{\eta(\tau)\eta(4\tau)}.
\]
The reciprocal of this function appears in Mersmann's list. By applying Theorem \ref{thm_main1}, we obtain
\begin{corollary}
For every prime $\ell\geq 5$, there is a modular form $h\in M_{\ell-1}(\Gamma_0(4))$ such that
\[
\frac{(q^{\ell};q^{\ell})_{\infty}(q^{4\ell};q^{4\ell})_{\infty}}{(q^{2\ell};q^{2\ell})_{\infty}}\sum_{n\geq 0}\operatorname{pod}\left(\ell n-\frac{\ell^2-1}{8}\right)q^n\equiv h\big|U_{\ell}\pmod{\ell}.
\]
and
\[
\sum_{n\geq 0}\operatorname{pod}\left(\ell n-\frac{\ell^2-1}{8}\right)q^n\not\equiv0\pmod{\ell}.
\]
\end{corollary}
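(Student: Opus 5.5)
For the first assertion I apply Theorem \ref{thm_main1} to the pair
\[
f=\frac{\eta(2\tau)}{\eta(\tau)\eta(4\tau)},\qquad g=\frac{\eta(\tau)\eta(4\tau)}{\eta(2\tau)}=\sum_{m\in\mathbb{Z}}b(m)q^{m^2/8},
\]
with $w=8$, $t=1$ and $N=4$. The function $g$ is in Mersmann's list. By the appendix it satisfies condition (3), with $b$ supported on odd $m$ and $b(\ell m)=\chi(\ell)b(m)$ for some sign $\chi(\ell)$. Theorem \ref{thm_main1} then gives
\[
h:=\chi(\ell)\Bigl(E_{\ell-1}-\bigl(\tfrac{-8}{\ell}\bigr)\tfrac{8}{3}[f,g]_{(\ell-1)/2}\Bigr)\in M_{\ell-1}(\Gamma_0(4)).
\]
Here I use $E_{\ell-1}\equiv 1\pmod\ell$ and $E_{\ell-1}|U_\ell\equiv 1$ to absorb the constant. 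The product $\prod(q^{\ell\delta};q^{\ell\delta})^{r_\delta}_\infty$ is exactly the displayed prefactor.

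For the non-congruence I argue by contradiction. Suppose $\sum\operatorname{pod}(\ell n-\frac{\ell^2-1}{8})q^n\equiv 0$. Then $h|U_\ell\equiv 0$. Since $\ell\nmid 4$, I will use that $U_\ell$ is injective on $M_{\ell-1}(\Gamma_0(4),\mathbb{F}_\ell)$. This is the analogue of Jochnowitz's result used before Corollary \ref{coro_inconforpar}; it follows from Serre's filtration argument, because weight $\ell-1<\ell+1$ means $h|U_\ell=h|T_\ell$ and $V_\ell$ raises the filtration. Injectivity forces $h\equiv 0\pmod\ell$. I then get a contradiction by computing constant terms at the three cusps $\infty,0,1/2$ of $\Gamma_0(4)$. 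By the $q$-expansion principle, $h\equiv0$ at $\infty$ forces the constant term of $h$ at every cusp to vanish mod $\lambda$.

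To compute these constant terms I use $[f,g]_v|\gamma_\kappa=[f|\gamma_\kappa,g|\gamma_\kappa]_v$. At each cusp $f|\gamma_\kappa$ is again, up to a constant, an eta-type series $q_\kappa^{\alpha_\kappa}(a_0+\cdots)$ whose reciprocal is $g|\gamma_\kappa$. The first part of Lemma \ref{lemma_valcusp} then gives the constant term $\left(\frac{-\alpha_\kappa}{\ell}\right)\frac38$, after adjusting for the cusp width. The orders are $\alpha_\infty=-1/8$, and $\alpha_0$, $\alpha_{1/2}$ come from the standard order formula for eta-quotients. One finds $g$ vanishes at $\infty$ and $0$ with order $1/8$ in the local parameter, and is nonzero at $1/2$, so $\alpha_{1/2}=0$. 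With the correct cusp-width normalisation, the constant term of $h$ at $1/2$ is $\chi(\ell)E_{\ell-1}|\gamma_{1/2}\equiv\chi(\ell)\neq0$, because the bracket term vanishes there ($\alpha^v=0$). This contradicts $h\equiv0$.

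The main obstacle is verifying that the bracket term genuinely contributes zero constant term at the cusp where $\alpha_\kappa=0$, and that $E_{\ell-1}$ stays $\equiv1$ there after slashing with the normalisation used in the $q$-expansion principle. If $\alpha_{1/2}$ were nonzero, I would instead combine constant terms at two cusps with Martin's formula \eqref{eq_hecek} to relate values, aiming for a mismatch of Legendre symbols such as $\left(\frac{-1/8}{\ell}\right)$ against $\left(\frac{-8}{\ell}\right)$, giving $1-1\cdot\frac38\cdot\frac83$ versus $1+1$.
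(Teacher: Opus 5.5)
Your first assertion is the same as the paper's, and so is your constant-term computation at the cusp $1/2$. Since $\ord_{1/2}f=0$, every term of $[f|\gamma_{1/2},g|\gamma_{1/2}]_v$ carries a derivative of a series regular at the cusp, so it has no constant term. Since $E_{\ell-1}$ has constant term $1$ at every cusp, $a_0(h,1/2)=\chi(\ell)$.

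\textbf{The gap.} The problem is the step that passes from $h|U_\ell\equiv0$ back to $h$. You assert that $U_\ell$ is injective on $M_{\ell-1}(\Gamma_0(4))\otimes\mathbb{F}_\ell$ ``by Serre's filtration argument''. Neither $U_\ell\equiv T_\ell$ (true in any weight $\ge 2$) nor the fact that $V_\ell$ multiplies filtrations by $\ell$ says anything about the kernel of $U_\ell$.

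Injectivity in weight $\ell-1$ is also not a general fact. Suppose $G$ is a weight-$4$ form on some $\Gamma_0(N)$ with $G|U_\ell\equiv0$, for instance a weight-$4$ eigenform that is non-ordinary at $\ell$. A theta-cycle computation shows that $\D^{\ell-3}G$ has filtration $\ell-1$ and is killed by $U_\ell$. Jochnowitz's level-one result works precisely because $S_4(\SL_2(\mathbb{Z}))=0$, so some level-specific input is needed.

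\textbf{Repairing injectivity for $\Gamma_0(4)$.} Your claim is true for $\Gamma_0(4)$ and can be proved along these lines. Suppose $h\not\equiv0$ has filtration $\ell-1$ and $h|U_\ell\equiv0$. Then $\D^{\ell-1}h=h$, and $\D h$ has filtration $2\ell$. Closing the cycle after $\ell-1$ steps forces the next drop to bring $\D^2h\neq0$ down to filtration $4$. But $M_4(\Gamma_0(4))$ is spanned by $E_4(\tau),E_4(2\tau),E_4(4\tau)$, and $U_\ell$ acts as the identity modulo $\ell$ on its $\ell$-integral forms. This contradicts $\D^2h\,|U_\ell\equiv0$. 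For $\ell=5$ the same fact applies to $h$ directly, since then $\ell-1=4$.

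\textbf{The paper's route avoids injectivity.} Since $h|U_\ell\equiv h|T_\ell$ in weight $\ell-1$, Martin's formula \eqref{eq_hecek} gives
\[
a_0(h|T_\ell,1/2)=a_0(h,1/(2\ell))+\ell^{\ell-2}a_0(h,1/(2\ell^*))\equiv a_0(h,1/2).
\]
This holds because $1/(2\ell)$ is $\Gamma_0(4)$-equivalent to $1/2$. Applying the $q$-expansion principle to $h|T_\ell$ then shows $h|U_\ell\not\equiv0$ at once. Your fallback paragraph already mentions this formula; making it the main step closes the gap with much less machinery.
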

\begin{proof}
We shall only prove the second assertion. The modular form $h$ has the explicit expression
\[
E_{\ell-1}(\tau)-\binom{\ell-3}{(\ell-1)/2}^{-1}(-8)^{\frac{\ell-1}{2}}\cdot[\eta(2\tau)/(\eta(\tau)\eta(4\tau)),\eta(\tau)\eta(4\tau))/\eta(2\tau)]_{(\ell-1)/2}.
\]
Since the Fourier coefficients of $h$ are $\ell$-integral, the Hecke operators $T_{\ell}$ and $U_{\ell}$ coincide modulo $\ell$. Considering the value of $h$ at the cusp $1/2$ modulo $\ell$, by using \eqref{eq_hecek}, we can obtain the following modulo $\ell$:
\[
a_0(h\big|U_{\ell},1/2)=a_0(h\big|T_{\ell},1/2)=a_0(h,1/2 \ell)=a_0(h,1/2).
\]
The value of $E_{\ell-1}(\tau)$ at the cusp $1/2$ is $1$, so we only need to calculate the value of the Rankin-Cohen bracket at the cusp $1/2$. By Lemma \ref{lemma_valcusp}, this is equivalent to calculating the invariant order of $\eta(2\tau)/(\eta(\tau)\eta(4\tau))$ with respect to the cusp $1/2$. For an eta-quotient $\prod_{\delta\mid N}\eta(\delta\tau)^{r_{\delta}}$, a formula by Ligozat \cite{zbMATH03503412} gives
\begin{equation}\label{eq_Ligozat}
\ord_{a/c}(f)=\frac{N}{24c\gcd(c,N/c)}\sum_{d\mid N}\frac{r_d\gcd(d,c)^2}{d}.
\end{equation}
Using this formula, we can verify that $\ord_{1/2}(\eta(2\tau)/(\eta(\tau)\eta(4\tau)))=0$, thus the value of $h$ at the cusp $1/2$ is $1$ modulo $\ell$. By the $q$-expansion principle, we conclude that $h\big|U_{\ell}$ is not 0 modulo $\ell$ at $i\infty$.
\end{proof}
The Eisenstein part of the modular form $[\eta(2\tau)/(\eta(\tau)\eta(4\tau)),\eta(\tau)\eta(4\tau))/\eta(2\tau)]_{(\ell-1)/2}$ can  be given explicitly:
\[
\binom{\ell-3}{(\ell-1)/2}\frac{(-1/8)^{(\ell-1)/2}}{2^{\ell-1}-1}\left(2^{\ell-1}E_{\ell-1}(\tau)-(2^{\ell-1}+1)E_{\ell-1}(2\tau)+2^{\ell-1}E_{\ell-1}(4\tau)\right).
\]
When $\ell=5$, the space of cusp form $S_4(\Gamma_0(4))$ vanishes. Consequently, the Eisenstein part above is equal to the modular form itself. After some calculations, we find that $h$ itself is an eta-quotient:
\[
h(\tau)=\frac{\eta(\tau)^8\eta(4\tau)^8}{\eta(2\tau)^8}.
\]
Thus we can obtain the following congruence:
\[
\sum_{n\geq 0}\operatorname{pod}\left(5 n-3\right)q^{n-5/8}\equiv \frac{\eta(\tau)^3\eta(4\tau)^3}{\eta(2\tau)^3}\pmod 5.
\]

Now, we follow the proof method from the two examples above to provide the proof of Corollary \ref{coro_inconforTypeI}.
\begin{proof}[Proof of Corollary \ref{coro_inconforTypeI}]
The example of $1/\eta$ has already been discussed in Corollary \ref{coro_inconforpar}, so we exclude it here. We divide the $q$-series in Type I into two categories: the first category consists of those with $t=0$, i.e. the following four:
\[
 \frac{\eta(2\tau)}{\eta(\tau)^2},~\frac{\eta(\tau)^2\eta(4\tau)^2}{\eta(2\tau)^5},~
 \frac{\eta(\tau)\eta(6\tau)}{\eta(2\tau)^2\eta(3\tau)},~
 \frac{\eta(2\tau)\eta(3\tau)\eta(12\tau)}{\eta(\tau)\eta(4\tau)\eta(6\tau)^2},
\]
and the second category comprises those with $t\neq 0$.

The proof of the first case is entirely similar to that in Corollary \ref{coro_overp}, so we omit it. It the second category, we turn to prove that 
\[
\prod_{\delta\mid N}(q^{\ell\delta};q^{\ell\delta})^{r_{\delta}}_{\infty}\sum_{n\geq 0}a(\ell n-\tfrac{t}{w}(\ell^2-1))q^n\not\equiv 0\pmod{\ell},
\]
which is
\[
E_{\ell-1}(\tau)-\left(\frac{-w}{\ell}\right)\frac{8}{3}\cdot[f(\tau),g(\tau)]_{(\ell-1)/2}\big|U_{\ell}\not\equiv 0\pmod{\ell}.
\]
By Lemma \ref{lemma_valcusp} and \eqref{eq_hecek}, the value of the modular form on the left-hand side of the above equation at the cusp $1/c$ is congruent to
\begin{equation}\label{eq_cuspII}
1-\left(\frac{w_{1/c\ell}\ord_{1/c\ell}f/\ord_{i\infty}f}{\ell}\right)\pmod\ell,
\end{equation}
where $w_{1/c\ell}$ is the width of the cusp $1/c\ell$. The order of the eta-quotient $f$ can be calculated using \eqref{eq_Ligozat}. Through concrete calculations, we have discovered that, with the exception of $\eta(\tau)\eta(4\tau)/\eta(2\tau)^3$, there exists a cusp $\kappa$ such that $\ord_{\kappa}(f)=0$, and thus \eqref{eq_cuspII} is not congruent to zero modulo $\ell$ for at least one cusp. The proof is completed by the $q$-expansion principle. 
\end{proof}
\subsection{Examples of Type II}\label{section_ex2}
Let $\spt_{\omega}(n)$ be the smallest parts function  corresponding  to $p_{\omega}(n)$, where $p_{\omega}(n)$ denotes the partitions of $n$ in which the odd parts are less than twice the smallest part. These two functions were first studied by Andrews, Dixit, and Yee \cite{zbMATH06533997}. Wang \cite[(4.3)]{zbMATH06688655} discovered that the generating function of $\spt_{\omega}(n)$, when restricted to odd indices, is a weakly holomorphic modular form:
\[
\sum_{n=0}^{\infty} \spt_{\omega}(2n+1) q^{2n+\frac{11}{12}} = \frac{\eta(4\tau)^8}{\eta(2\tau)^5}.
\]
Note that 
\[
\frac{\eta(4\tau)^8}{\eta(2\tau)^5}\cdot \eta(2\tau)=\frac{\eta(4\tau)^8}{\eta(2\tau)^4}
\]
is a cusp form in $M_2(\Gamma_0(4))$. The pair $(\eta(4\tau)^8/\eta(2\tau)^5,\eta(2\tau))$ satisfies all the conditions in Theorem \ref{thm_main2}, so we have the following result, in which the second congruence is a special case of \cite[Theorem 1.2]{zbMATH08135742}.
\begin{corollary}\label{coro_sptw}
For every prime $\ell\geq 5$, there is a cusp form $h(\tau)\in S_{\ell+1}(\Gamma_0(4))$ such that  
\[
(q^{2\ell};q^{2\ell})_{\infty}\sum_{n\geq 0}\spt_{\omega}\left(\ell(2 n+1)-\frac{\ell^2-1}{12}\right)q^{2n+1}\equiv h(\tau)\big|U_{\ell}\pmod{\ell},
\]
and for every $n\geq 0$,
\[
\spt_{\omega}\left(\ell^2 (2n+1)-\frac{\ell^2-1}{12}\right)\equiv \left(\frac{3}{\ell}\right)\left(1-\left(\frac{-11-24n}{\ell}\right)\right)\spt_{\omega}(2n+1)\pmod\ell.
\]
\end{corollary}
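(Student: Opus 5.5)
The plan is to specialize Theorem \ref{thm_main2} to the pair $f(\tau)=\eta(4\tau)^8/\eta(2\tau)^5$, $g(\tau)=\eta(2\tau)$ with $N=4$, and then obtain the second congruence from Corollary \ref{coro_type2}.

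\textbf{Matching the parameters.} The leading exponent of $f$ is $(4\cdot 8-2\cdot 5)/24=11/12=1-\tfrac1{12}$. So I write
\[
f(\tau)=\sum_{n\ge 0}a(n)q^{n-\frac1{12}},
\]
where $a(n)=\spt_\omega(n)$ for odd $n$ and $a(n)=0$ for even $n$. Thus $t=1$ and $w=12$, and the $a(n)$ are integers.

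For $g$ I use Euler's pentagonal theorem in the form
\[
\eta(2\tau)=\sum_{m\in\mathbb Z}\left(\tfrac{12}{m}\right)q^{m^2/12}.
\]
This gives $s=1$ and $b(m)=\left(\frac{12}{m}\right)$. Since $b$ is completely multiplicative, $b(\ell m)=\chi(\ell)b(m)$ with $\chi(\ell)=\left(\frac{12}{\ell}\right)=\left(\frac{3}{\ell}\right)$. The condition $\ell\nmid sw=12$ is exactly $\ell\ge 5$.

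\textbf{Checking the hypotheses of Theorem \ref{thm_main2}.} First, $g=\eta(2\tau)$ is a holomorphic weight $1/2$ form on $\Gamma_0(4)$ with a multiplier; this is the standard transformation law of $\eta$, so condition (2) of Theorem \ref{thm_main1} holds. Second, $fg=\eta(4\tau)^8/\eta(2\tau)^4$ lies in $S_2(\Gamma_0(4))$, as noted before the statement. This can be checked with Ligozat's formula \eqref{eq_Ligozat} at the cusps $\infty,0,1/2$, which gives positive orders, together with the usual eta-quotient criterion: $\sum\delta r_\delta\equiv\sum (N/\delta)r_\delta\equiv0\pmod{24}$ and square product. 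Verifying this carefully is the only step requiring actual computation, and I expect it to be the main (mild) obstacle.

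\textbf{First congruence.} Theorem \ref{thm_main2} then gives
\[
\left(\tfrac{3}{\ell}\right)(q^{2\ell};q^{2\ell})_\infty\sum_{n\ge0}a\!\left(\ell n-\tfrac{\ell^2-1}{12}\right)q^n\equiv\Big(fg-\left(\tfrac{-12}{\ell}\right)[f,g]_{(\ell-1)/2}\Big)\big|U_\ell \pmod\ell .
\]
Here $[f,g]_{(\ell-1)/2}\in S_{\ell+1}(\Gamma_0(4))$ by the cusp-form clause of Theorem \ref{thm_main2}.

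Since $24\mid \ell^2-1$, the number $(\ell^2-1)/12$ is even. Hence $\ell n-(\ell^2-1)/12$ is odd exactly when $n$ is odd, so only the terms $n=2k+1$ survive on the left, each with coefficient $\spt_\omega$.

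To make the right side a cusp form of weight $\ell+1$, I replace $fg$ by $E_{\ell-1}fg\equiv fg$, which lies in $S_{\ell+1}(\Gamma_0(4))$. I then set
\[
h=\left(\tfrac{3}{\ell}\right)\Big(E_{\ell-1}fg-\left(\tfrac{-12}{\ell}\right)[f,g]_{(\ell-1)/2}\Big)
\]
and multiply through by $\left(\frac{3}{\ell}\right)$, using $\left(\frac{3}{\ell}\right)^2=1$. This gives the first assertion.

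\textbf{Second congruence.} Since $X_0(4)$ has genus $0$, $4\ne 25$ and $\ell\nmid 4$, Corollary \ref{coro_type2} applies and gives
\[
\left(\tfrac{3}{\ell}\right)a\!\left(\ell^2n-\tfrac{\ell^2-1}{12}\right)\equiv\Big(1-\left(\tfrac{1-12n}{\ell}\right)\Big)a(n)\pmod\ell .
\]
Substituting $n\mapsto 2n+1$ gives $1-12(2n+1)=-11-24n$. Multiplying by $\left(\frac{3}{\ell}\right)$ then yields the stated congruence.
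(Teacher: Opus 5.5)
\textbf{Gap: the cusp-form claim for $h$ rests on a false statement.} Your parameter matching ($s=1$, $t=1$, $w=12$, $\chi(\ell)=\left(\frac{3}{\ell}\right)$) agrees with the paper, up to a harmless missing factor $\frac12$ in $\eta(2\tau)=\frac12\sum_{m\in\mathbb{Z}}\left(\frac{12}{m}\right)q^{m^2/12}$. Your parity argument isolating odd indices and your derivation of the second congruence from Corollary~\ref{coro_type2} also agree with the paper. The problem is the cuspidality of $h$, which is exactly the part of the corollary that Theorem~\ref{thm_main2} does not already give.

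The product $fg=\eta(4\tau)^8/\eta(2\tau)^4$ is \emph{not} a cusp form. Ligozat's formula \eqref{eq_Ligozat} gives $\ord_{i\infty}(fg)=1$ but $\ord_0(fg)=\ord_{1/2}(fg)=0$, so the orders at $0$ and $1/2$ are zero, not positive as you state. This is consistent with the valence formula, since the total order is $2\cdot 6/12=1$. In any case $S_2(\Gamma_0(4))=\{0\}$ because $X_0(4)$ has genus $0$. In fact $fg=\sum_{n\ \mathrm{odd}}\sigma(n)q^n$ is an Eisenstein series.

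So the cusp-form clause of Theorem~\ref{thm_main2} is not available, and neither $E_{\ell-1}fg$ nor $[f,g]_{(\ell-1)/2}$ is cuspidal. At a cusp where $f$ and $g$ have leading exponents $-\alpha$ and $\alpha$, Vandermonde gives the exact constant term of $[f,g]_v$ as $a(0)b(0)\binom{2v}{v}\alpha^v\neq 0$. Your $h$ therefore lies only in $M_{\ell+1}(\Gamma_0(4))$.

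\textbf{The missing idea.} This is the whole content of the paper's proof: the constant terms cancel only modulo $\ell$, and only in the specific combination $E_{\ell-1}fg-\left(\frac{-12}{\ell}\right)[f,g]_v$.
\begin{itemize}
\item Lemma~\ref{lem_cuspII} gives the leading coefficient of this combination at a cusp as $a(0)b(0)\bigl(1-\left(\frac{sw\alpha}{\ell}\right)\bigr)$. Here $sw=12$ is fixed by the expansion at $i\infty$, and $\alpha$ is the local exponent at that cusp.
\item By \eqref{eq_Ligozat}, the invariant orders of $f$ at $0$ and $1/2$ are $-1/48$ and $-1/12$, and those of $\eta(2\tau)$ are $1/48$ and $1/12$.
\item Hence $12\alpha\in\{1/4,\,1\}$ is a square, so the constant term vanishes modulo $\ell$ at both cusps. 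At $i\infty$ it vanishes outright.
\item The $q$-expansion principle then lets you replace your $h$ by a genuine cusp form in $S_{\ell+1}(\Gamma_0(4))$ congruent to it modulo $\ell$, which is what the corollary asserts.
\end{itemize}
The sentence just before the corollary calling $fg$ a cusp form should not be relied on; the paper's proof itself does not use it.
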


Noting that the congruence in the above result occurs within the space of cusp forms, our explicit construction establishes this fact, provided that we invoke the following result.
\begin{lemma}\label{lem_cuspII}
Let $\ell \ge 3$ be a prime and $v = (\ell-1)/2$. Let $f(q) = a(0) q^{-\alpha} + \cdots$ and $g(q) = b(0) q^\beta + \cdots$ be formal power series of weights $a = 3/2$ and $b = 1/2$, respectively. Assume that $a(0)b(0)\not\equiv0\pmod\ell$.  Then the leading coefficient of the $q$-expansion of  $fg-\left(\frac{-sw}{\ell}\right)[f,g]_v$ is given by
\[
a(0)b(0)\left(1-\left(\frac{sw\alpha}{\ell}\right)\right) \pmod \ell.
  \]
\end{lemma}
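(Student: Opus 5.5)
Since $\D$ does not lower the leading exponent, the leading term of $[f,g]_v$ comes only from the leading terms of $f$ and $g$. So I would work with the truncations $f=a(0)q^{-\alpha}+\cdots$ and $g=b(0)q^{\beta}+\cdots$ and compute the coefficient of $q^{\beta-\alpha}$ in $[f,g]_v$. Here $\D$ acts on $q^{\gamma}$ by multiplication by $\gamma$ (with the paper's normalization of $\D$), so $\D^jf\cdot\D^{v-j}g$ contributes $a(0)b(0)(-\alpha)^j\beta^{v-j}$. The coefficient is therefore
\[
a(0)b(0)\sum_{0\le j\le v}(-1)^j\binom{a+v-1}{v-j}\binom{b+v-1}{j}(-\alpha)^j\beta^{v-j},\qquad a=\tfrac32,\ b=\tfrac12 .
\]

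The key simplification is the one already used in the proof of Corollary~\ref{coro_type2}. With $a=3/2$ we have $a+v-1=v+1/2=\ell/2\equiv0\pmod\ell$. The first binomial coefficient $\binom{\ell/2}{v-j}$ has numerator $(\ell/2)(\ell/2-1)\cdots$ with denominator $(v-j)!$ prime to $\ell$, so it is $\equiv0$ for $j<v$ and equals $1$ for $j=v$. Only the $j=v$ term survives. Its other factor is $\binom{v-1/2}{v}$, and since $v-1/2\equiv-1\pmod\ell$ this is $\equiv(-1)^v$. The leading coefficient of $[f,g]_v$ is then
\[
a(0)b(0)(-1)^v(-1)^v(-\alpha)^v\equiv a(0)b(0)\Bigl(\frac{-\alpha}{\ell}\Bigr)\pmod\ell,
\]
by Euler's criterion. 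If $\ell\mid\alpha$ (i.e.\ the numerator of $\alpha$), this reads $0$, which is consistent with the Legendre symbol being $0$.

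Combining with $fg$, whose leading coefficient is $a(0)b(0)$ at the same exponent $q^{\beta-\alpha}$, the leading coefficient of $fg-\bigl(\frac{-sw}{\ell}\bigr)[f,g]_v$ is
\[
a(0)b(0)\Bigl(1-\Bigl(\tfrac{-sw}{\ell}\Bigr)\Bigl(\tfrac{-\alpha}{\ell}\Bigr)\Bigr)=a(0)b(0)\Bigl(1-\Bigl(\tfrac{sw\alpha}{\ell}\Bigr)\Bigr),
\]
which is the claim.

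The only delicate points are bookkeeping. First, I must confirm the Legendre symbol makes sense for $\alpha\in\mathbb{Z}_{(\ell)}$ rational, interpreting $\alpha^v$ via $\ell$-adic units. Second, I must check that the exponent $\beta$ genuinely drops out. It does, because its only appearance is in $\beta^{v-j}$ with $j=v$. I would also remark that the reduction $\binom{\ell/2}{k}\equiv0$ for $1\le k\le v$ uses $k<\ell$, so no factor of $\ell$ in a denominator cancels it. I expect this last check to be the main (mild) obstacle.
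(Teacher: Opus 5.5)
Your proof is correct and follows the paper's argument. Both proofs use $a+v-1=\ell/2\equiv 0\pmod\ell$ to collapse the leading coefficient of $[f,g]_v$ to the $j=v$ term, then evaluate $\binom{v-1/2}{v}\equiv\binom{-1}{v}=(-1)^v$ to get $a(0)b(0)(-\alpha)^v$, and finish with Euler's criterion and the factor $\left(\frac{-sw}{\ell}\right)$. Your write-up also states the indices and the justification that $\binom{\ell/2}{k}\equiv 0$ for $1\le k\le v$ more carefully than the paper does.
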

\begin{proof}
By definition, the leading Fourier coefficient $[f,g]_v$ is given by
\[
a(0) b(0) \sum_{j=0}^v (-1)^j \binom{v+1/2}{v-j} \binom{v-1/2}{j} (-\alpha)^r \beta^{v-r}. 
\]
The summation modulo $\ell$ collapses to the single term $j=v$; thus, the above is congruent to
\[
  a(0) b(0)  (-1)^v \binom{v+1/2}{0} \binom{v-1/2}{v} (-tw)^v \equiv  a(0) b(0)  (-\alpha)^v \pmod \ell.
\]
Thus the leading coefficient of $fg-\left(\frac{-sw}{\ell}\right)[f,g]_v$ becomes
\[
a(0)b(0) \left( 1 - \left(\frac{sw\alpha}{\ell}\right) \right) \pmod \ell. 
\]
Given $a(0)b(0) \not\equiv 0 \pmod \ell$, the coefficient vanishes if and only if $\left(\frac{sw\alpha}{\ell}\right) = 1$.
\end{proof}

\begin{proof}[Proof of Corollary \ref{coro_sptw}]
It suffices to show that $h$ is a cusp form, as the remaining results follow directly from Theorem \ref{thm_main2} and Corollary \ref{coro_type2}. By Ligozat's formula \eqref{eq_Ligozat}, we see that the invariant orders of the modular form $\frac{\eta(4\tau)^8}{\eta(2\tau)^5}$ with respect to the cusps 0 and $1/2$ are $-1/48$ and $-1/12$, respectively, whereas those of $\eta(2\tau)$ are $1/48$ and $1/12$. It then follows from Lemma \ref{lem_cuspII} that the values of $h$ at both cusps 0 and $1/2$ vanish modulo $\ell$. Consequently, the $q$-expansion principle ensures that $h$ is congruent modulo $\ell$ to a cusp form, which completes the proof.
\end{proof}

Let $\overline{\spt}_{\omega}(n)$ denote the number of smallest parts in the overpartition of $n$ in which the smallest part is always overlined and all odd parts are less than twice 
the smallest part. In the same paper, Wang \cite[Theorem 3.1]{zbMATH06688655} found that
\[
\sum_{n\geq 0}\overline{\spt}_{\omega}(2n+1)q^{2n+1}=\frac{\eta(4\tau)^9}{\eta(2\tau)^6},
\]
note that
\[
\frac{\eta(4\tau)^9}{\eta(2\tau)^6}\cdot \frac{\eta(2\tau)^2}{\eta(4\tau)}=\frac{\eta(4\tau)^8}{\eta(2\tau)^4}
\]
is a modular form in $M_{2}(\Gamma_0(4))$. Taking $g(\tau)=\frac{\eta(2\tau)^2}{\eta(4\tau)}$, in analogy to Corollary \ref{coro_sptw}, we obtain the following.
\begin{corollary}\label{coro_osptw}
For every prime $\ell\geq 3$, there is a cusp form $h(\tau)\in S_{\ell+1}(\Gamma_0(4))$ such that  
\[
\frac{(q^{2\ell};q^{2\ell})_{\infty}^2}{(q^{4\ell};q^{4\ell})_{\infty}}\sum_{n\geq 0}\overline{\spt}_{\omega}\left(2\ell n+\ell\right)q^{2n+1}\equiv h(\tau)\big|U_{\ell}\pmod{\ell},
\]
and for every $n\geq 0$,
\[
\overline{\spt}_{\omega}\left(\ell^2 (2n+1)\right)\equiv\left(1-\left(\frac{-4n-2}{\ell}\right)\right)\overline{\spt}_{\omega}(2n+1)\pmod\ell.
\]
\end{corollary}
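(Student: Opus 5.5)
We follow the proof of Corollary \ref{coro_sptw}, now with $f(\tau)=\eta(4\tau)^9/\eta(2\tau)^6$ and $g(\tau)=\eta(2\tau)^2/\eta(4\tau)$.

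First we check the hypotheses of Theorem \ref{thm_main2}. The product $fg=\eta(4\tau)^8/\eta(2\tau)^4$ lies in $M_2(\Gamma_0(4))$. The form $g$ is in Mersmann's list, after $\tau\mapsto2\tau$ of $\eta(\tau)^2/\eta(2\tau)$, so condition (2) of Theorem \ref{thm_main1} holds. Gauss's identity gives
\[
g(\tau)=\sum_{m\in\mathbb{Z}}(-1)^mq^{2m^2}.
\]
In the notation $\sum b(m)q^{sm^2/w}$ we take $w=1$, $s=2$ and $b(m)=(-1)^m$. For odd $\ell$ we have $b(\ell m)=b(m)$, so $\chi(\ell)=1$. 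Also $f=\sum a(n)q^{n-t/w}$ with $t=0$: the exponent shift is $(9\cdot4-6\cdot2)/24=1$, so $f=q\prod(\cdots)$ is a genuine power series in $q$. Since $\ell\nmid sw=2$ for every $\ell\ge3$, Theorem \ref{thm_main2} applies. The factor $\prod(q^{\ell\delta};q^{\ell\delta})^{r_\delta}_\infty$ equals $(q^{2\ell};q^{2\ell})^2_\infty/(q^{4\ell};q^{4\ell})_\infty$. Because $f$ is supported on odd exponents, the sum $\sum a(\ell n)q^n$ equals $\sum_n\overline{\spt}_\omega(2\ell n+\ell)q^{2n+1}$. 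We then set
\[
h:=fg-\left(\frac{-2}{\ell}\right)[f,g]_{(\ell-1)/2}\in M_{\ell+1}(\Gamma_0(4)),
\]
which gives the first congruence.

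Next we show $h$ is a cusp form modulo $\ell$. At $i\infty$ we have $\ord(f)=1$ and $\ord(g)=0$, so both $fg$ and the bracket vanish there. At the cusps $0$ and $1/2$ of $\Gamma_0(4)$ we compute the orders with Ligozat's formula \eqref{eq_Ligozat}. If $fg$ already vanishes at a cusp, Lemma \ref{lemma_RC_valuation} shows the bracket vanishes there as well. Otherwise $\ord_\kappa(f)=-\ord_\kappa(g)=:-\alpha$, and Lemma \ref{lem_cuspII} shows that the constant term of $h$ at $\kappa$ is
\[
a(0)b(0)\left(1-\left(\frac{sw\alpha'}{\ell}\right)\right),
\]
where $\alpha'$ is $\alpha$ normalised by the cusp width.

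The main obstacle is checking that $\left(\frac{2\alpha'}{\ell}\right)=1$ for \emph{every} $\ell\ge3$. This requires the normalised order to equal $2$ times a square, so that the Legendre symbol is $1$ independently of $\ell$. We expect Ligozat to give orders such as $1/8$ and $1/2$ at $1/2$ and $0$; after the same normalisation as in Corollary \ref{coro_sptw}, these become $2\cdot(1/4)^2$-type quantities. If some normalisation does not work out, we fall back on the exact vanishing orders of $fg=\eta(4\tau)^8/\eta(2\tau)^4$. This form is the cusp form $\eta(4\tau)^8/\eta(2\tau)^4$ attached to the theta-squared of level $4$, so if it vanishes at all cusps, Lemma \ref{lemma_RC_valuation} makes $h$ exactly cuspidal. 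In either case the $q$-expansion principle then makes $h$ congruent to a cusp form.

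For the $\ell^2$ congruence we invoke Corollary \ref{coro_type2} with $N=4$, where the genus is $0$ and $\ell\nmid 4$. Taking $\chi(\ell)=1$, $s=2$, $t=0$, $w=1$ gives
\[
a(\ell^2n)\equiv\left(1-\left(\frac{-2n}{\ell}\right)\right)a(n)\pmod{\ell}.
\]
Only odd $n$ contribute; writing $n=2k+1$ and $a(n)=\overline{\spt}_\omega(n)$ turns $-2n$ into $-4k-2$, which is the claimed formula. For $\ell=3$, Proposition \ref{prop_ul^2} was proved by explicit verification, so this case requires the same explicit check.
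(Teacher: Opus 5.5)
Your route is the paper's route: Theorem \ref{thm_main2} with $f=\eta(4\tau)^9/\eta(2\tau)^6$ and $g=\eta(2\tau)^2/\eta(4\tau)$, then Ligozat, Lemma \ref{lem_cuspII} and the $q$-expansion principle for cuspidality, then Corollary \ref{coro_type2} with $N=4$. Your bookkeeping is right: $s=2$, $w=1$, $t=0$, $\chi(\ell)=1$, the factor $(q^{2\ell};q^{2\ell})_\infty^2/(q^{4\ell};q^{4\ell})_\infty$, the restriction to odd exponents, and the passage from $\left(\frac{-2n}{\ell}\right)$ to $\left(\frac{-4n-2}{\ell}\right)$.

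The gap is in the one non-formal step, the cuspidality of $h$. You do not carry out the computation, your predicted order at the cusp $0$ is wrong, and your fallback is false. Take the fallback first. The product $fg=\eta(4\tau)^8/\eta(2\tau)^4=\sum_{n\ge0}\sigma(2n+1)q^{2n+1}$ is an Eisenstein series, not a cusp form. Indeed $S_2(\Gamma_0(4))=\{0\}$ because $X_0(4)$ has genus $0$; the paper's passing description of this product as a cusp form in the $\spt_\omega$ example is inaccurate in the same way. By \eqref{eq_Ligozat} its order is $0$ at both $0$ and $1/2$, so Lemma \ref{lemma_RC_valuation} can never make $h$ exactly cuspidal. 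Cuspidality holds only modulo $\ell$ and rests entirely on the Legendre-symbol check.

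That check does go through, and it has to be done explicitly.
\begin{itemize}
\item By \eqref{eq_Ligozat}, $\ord_0 f=\ord_{1/2}f=-1/8$ and $\ord_0 g=\ord_{1/2} g=1/8$ in the local uniformizers, not $1/2$ at $0$ as you guessed.
\item The cusps $0$ and $1/2$ of $\Gamma_0(4)$ have widths $4$ and $1$. So the leading $q$-exponent of $f|\gamma_\kappa$ is $-\alpha$ with $\alpha=1/32$ at $0$ and $\alpha=1/8$ at $1/2$. Directly, $f(-1/\tau)$ is a multiple of $\eta(\tau/4)^9\eta(\tau/2)^{-6}$, whose exponent is $9/96-6/48=-1/32$.
\item Hence $sw\alpha=2\alpha$ equals $(1/4)^2$, respectively $(1/2)^2$. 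So $\left(\frac{2\alpha}{\ell}\right)=1$ for every odd $\ell$, since $\alpha$ is an $\ell$-unit.
\item Lemma \ref{lem_cuspII} then kills the constant term of $h$ modulo $\ell$ at both cusps. At $\infty$, $h$ vanishes because $\ord_\infty f=1$ and $\ord_\infty g=0$.
\end{itemize}
The width-normalisation issue you worried about is harmless here because both widths are perfect squares: without dividing by the width, $2\alpha=1/4$ at both cusps, still a square. With this computation inserted in place of the expectation and the fallback, the rest of your argument matches the paper.
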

The second congruence in it was generalized to prime powers by Mao \cite{zbMATH07567828}.

The generating functions of the two examples mentioned above are, in fact, mock modular forms of weight $3/2$. They all share the same origin: since their shadows are theta series, there always exists some arithmetic progression that makes the mock modular form behave as a weakly holomorphic modular form over this arithmetic progression, for example see \cite{zbMATH06334779} for the cases of mock theta function. As we will see in Section \ref{section_ex3}, considering its corresponding harmonic Maass form directly and examining it within Type III will yield more general congruence results.

Let $\theta(\tau)=\sum_{n\in\mathbb{Z}}q^{n^2}$ be the standard theta function, which is  an eta-quotient of weight $1/2$. Then $\theta(\tau)^3$ is a holomorphic modular form of weight $3/2$ and the pair $(\theta(\tau)^3,\theta(\tau))$ satisfies the conditions in Theorem \ref{thm_main2}. The $n$-th Fourier coefficient of $\theta(\tau)^3$  represents the number of ways to express $n$ as a sum of three squares, denoted by $r_3(n)$. By the results established in this paper, we have the following congruence.
\begin{corollary}\label{coro_theta3}
For every prime $\ell\geq 3$, there is a modular form $h(\tau)\in M_{\ell+1}(\Gamma_0(4))$ such that  
\[
\theta(\ell^2\tau) \sum_{n\geq 0}r_3(\ell n)q^n\equiv h(\tau)\big|U_{\ell}\pmod{\ell},
\]
and for every $n\geq 0$,
\[
r_3(\ell^2n)\equiv\left(1-\left(\frac{-n}{\ell}\right)\right)r_3(n)\pmod\ell.
\]
\end{corollary}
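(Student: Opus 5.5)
The plan is to specialize Theorem \ref{thm_main2} to the pair $(f,g)=(\theta(\tau)^3,\theta(\tau))$ on $\Gamma_0(4)$ and then move from $\theta(\ell\tau)$ to $\theta(\ell^2\tau)$ by a Frobenius-type manipulation. The correction factor this introduces has to be reabsorbed into weight $\ell+1$, and that reabsorption is the step I have not resolved. The parameters are $t=0$, $w=1$ and $s=1$, with $g=\theta=\eta(2\tau)^5/(\eta(\tau)^2\eta(4\tau)^2)$. In the notation of Lemma \ref{lemma_fgUl} we have $b(m)=1$ for all $m$, so $b(\ell m)=b(m)$ and $\chi(\ell)=1$. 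Also $fg=\theta^4\in M_2(\Gamma_0(4))$. Condition (2) of Theorem \ref{thm_main1} holds because $\theta$ is in Mersmann's list, and condition (3) is immediate. So Theorem \ref{thm_main2}, or directly Lemma \ref{lemma_fgUl} combined with \eqref{eq_fgUl}, gives
\[
\theta(\ell\tau)\sum_{n\ge0}r_3(\ell n)q^n=(f(\tau)g(\ell^2\tau))\big|U_\ell\equiv H\big|U_\ell\pmod\ell,
\]
where $H:=\theta^4-\left(\frac{-1}{\ell}\right)[\theta^3,\theta]_{(\ell-1)/2}\in M_{\ell+1}(\Gamma_0(4))$. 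The bracket is holomorphic by Lemma \ref{lemma_RC_valuation}.

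To reach the factor $\theta(\ell^2\tau)$ in the statement, I would use $\theta(\ell^2\tau)\equiv\theta(\ell\tau)^{\ell}\pmod\ell$, which holds because $\theta$ has integer coefficients and $F(\tau)^\ell\equiv F(\ell\tau)$. Multiplying the display by $\theta(\ell\tau)^{\ell-1}$ and applying \eqref{eq_propertyUl} gives
\[
\theta(\ell^2\tau)\sum_{n\ge0}r_3(\ell n)q^n\equiv\theta(\ell\tau)^{\ell-1}\,(H\big|U_\ell)=\bigl(\theta(\ell^2\tau)^{\ell-1}H\bigr)\big|U_\ell\pmod\ell .
\]
The left side is therefore congruent to the $U_\ell$-image of a modular form on $\Gamma_0(4)$ with $\ell$-integral coefficients. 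Now assume that the left side, call it $G$, is congruent to some element of $M_{\ell+1}(\Gamma_0(4))$. Since $X_0(4)$ has genus zero, $4\ne25$ and $\ell\nmid4$, Proposition \ref{prop_ul^2} gives $G\equiv (G|U_\ell)|U_\ell$. Then $h:=G|U_\ell\in M_{\ell+1}(\Gamma_0(4))$ satisfies the first congruence.

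The main obstacle is the weight reduction just assumed. The form $\theta(\ell^2\tau)^{\ell-1}H$ has weight much larger than $\ell+1$. I would try to control the filtration of $\bigl(\theta(\ell^2\tau)^{\ell-1}H\bigr)|U_\ell$ with the level-$4$ analogue of the Serre/Jochnowitz bound $w(F|U_\ell)\le \ell+(w(F)-1)/\ell$, iterating it if necessary. A single application does not reach weight $\ell+1$, so I would first try to lower the filtration of $\theta(\ell^2\tau)^{\ell-1}H$ itself. One idea is to use $E_{\ell-1}\equiv1$ together with the explicit cusp behaviour of $H$ from Lemma \ref{lem_cuspII}. Another is to work with $\theta(\ell\tau)^{\ell-1}$ directly, since it is an $\ell$-th power and so is congruent to $\theta^{\ell(\ell-1)}$.

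The second congruence, $r_3(\ell^2n)\equiv\bigl(1-\left(\frac{-n}{\ell}\right)\bigr)r_3(n)$, does not depend on this step. It is Corollary \ref{coro_type2} with $t=0$, $w=s=1$ and $\chi(\ell)=1$, whose hypotheses were verified above. Concretely, one applies $U_\ell$ once more to the unmodified congruence for $\theta(\ell\tau)\sum r_3(\ell n)q^n$ and uses the mod-$\ell$ collapse of $[\theta^3,\theta]_v$ to $\left(\frac{-1}{\ell}\right)\theta\sum\left(\frac{-n}{\ell}\right)r_3(n)q^n$. Cancelling $\theta$ then gives the stated congruence.
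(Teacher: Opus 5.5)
\textbf{The gap.} The step you flag as unresolved cannot be completed, because the first congruence as printed, with the factor $\theta(\ell^2\tau)$, is false for every odd prime $\ell$. Your own manipulation shows this. Modulo $\ell$ we have $\theta(\ell\tau)\equiv\theta(\tau)^{\ell}$, and $U_\ell\equiv T_\ell$ on $\ell$-integral forms of weight $\ell+1$. Hence $\theta(\ell^2\tau)\sum_{n\ge0}r_3(\ell n)q^n\equiv\theta(\ell\tau)^{\ell-1}\,(H\big|U_\ell)\equiv\theta(\tau)^{\ell(\ell-1)}\,(H\big|T_\ell)$, which is the reduction of a form on $\Gamma_1(4)$ of weight $\ell+1+\ell(\ell-1)/2$. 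Suppose it were also congruent to $h\big|U_\ell\equiv h\big|T_\ell$ for an $\ell$-integral $h\in M_{\ell+1}(\Gamma_0(4))$. Then the same nonzero mod-$\ell$ $q$-expansion (its constant term is $r_3(0)=1$) would occur in two weights differing by $\ell(\ell-1)/2\equiv(\ell-1)/2\not\equiv0\pmod{\ell-1}$. This contradicts the Serre--Katz theorem that such weights are congruent modulo $\ell-1$ when the level is prime to $\ell$. Multiplying by $E_{\ell-1}$, or lowering filtrations with Jochnowitz-type bounds, only moves the weight within its class modulo $\ell-1$, so none of your proposed repairs can reach weight $\ell+1$. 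For $\ell=3$ you can see the failure by hand. Put $F=\eta(4\tau)^8/\eta(2\tau)^4=\sum_{n\text{ odd}}\sigma(n)q^n$; then $\theta^4\equiv1-F\pmod 3$, so the reductions of $3$-integral forms in $M_4(\Gamma_0(4))$ are exactly the series $\alpha+\beta F+\gamma F^2$. The coefficients $1,2,0,0$ of $\theta(9\tau)\sum_{n\ge0}r_3(3n)q^n$ at $q^0,\dots,q^3$ force $(\alpha,\beta,\gamma)=(1,2,0)$, but then the $q^3$-coefficient would be $4\beta\equiv2$, not $0$.

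\textbf{What is intended.} The printed $\theta(\ell^2\tau)$ is a misprint for $\theta(\ell\tau)$; the same slip occurs in the Hurwitz class number corollary. The paper's proof is simply Theorem~\ref{thm_main2} applied to the pair $(\theta^3,\theta)$, whose factor $\prod_{\delta\mid N}(q^{\ell\delta};q^{\ell\delta})_\infty^{r_\delta}$ is $\theta(\ell\tau)$ here, followed by Corollary~\ref{coro_type2} for the second congruence. Your first display, with $h=H=\theta^4-\left(\frac{-1}{\ell}\right)[\theta^3,\theta]_{(\ell-1)/2}$, is exactly that proof of the corrected first assertion. As a consistency check, with $\theta(3\tau)$ in place of $\theta(9\tau)$ the $q^3$-coefficient above becomes $r_3(9)+2r_3(0)=32\equiv2$, as required. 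Your argument for $r_3(\ell^2n)\equiv\bigl(1-\left(\frac{-n}{\ell}\right)\bigr)r_3(n)$ is correct and coincides with the paper's: a second application of $U_\ell$, then $U_\ell^2\equiv\mathrm{Id}$ on $M_{\ell+1}(\Gamma_0(4))$, the mod-$\ell$ collapse of the bracket, and cancellation of $\theta$. So drop the Frobenius step and the assumption about $G$; nothing further is needed.
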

Another explanation for the second congruence above is that $\theta(\tau)^3$ is an eigenvector of the Hecke operators $T_{\ell^2}$:
\[
r_3(\ell^2 n)+ \left(\frac{-n}{\ell}\right)r_3(n)+ \ell r_3(n/\ell^2)=(\ell+1)r_3(n).
\]

\subsection{Examples of Type III}\label{section_ex3}
Andrews, Dixit and Yee \cite[Lemma 6.1]{zbMATH06533997} expressed the generating function of $\spt_{\omega}(n)$ as a Appell-Lerch sum: 
\begin{equation}\label{eq_sptw}
\sum_{n=1}^{\infty} \spt_{\omega}(n) q^n = \frac{1}{(q^2; q^2)_{\infty}} \sum_{n=1}^{\infty} \frac{n q^n}{1 - q^n} + \frac{1}{(q^2; q^2)_{\infty}} \sum_{n=1}^{\infty} \frac{(-1)^n (1 + q^{2n}) q^{n(3n+1)}}{(1 - q^{2n})^2}.
\end{equation}
By comparing it with the generating function of the $\spt(n)$ in \eqref{eq_genspt}, we can see that the generating function of $\spt_{\omega}(n)$ is essentially a mock modular form of weight $3/2$ and level $2$. By taking $g(\tau)=\eta(2\tau)$, the conditions in Theorem \ref{thm_main3} are satisfied. Thus, we have the following congruence which is a strengthened version of the one in Corollary \ref{coro_sptw}.
\begin{corollary}
For every prime $\ell\geq 5$, there is a cusp form $h(\tau)\in S_{\ell+1}(\Gamma_0(2))$ such that  
\[
(q^{2\ell};q^{2\ell})_{\infty}\sum_{n\geq 0}\spt_{\omega}\left(\ell n-\frac{\ell^2-1}{12}\right)q^n\equiv h(\tau)\big|U_{\ell}\pmod{\ell},
\]
and for every $n\geq 0$,
\[
\spt_{\omega}\left(\ell^2 n-\frac{\ell^2-1}{12}\right)\equiv \left(\frac{3}{\ell}\right) \left(1-\left(\frac{1-12n}{\ell}\right)\right)\left(\spt_{\omega}(n)+\frac{12n-1}{24}\cdot p\left(n/2\right)\right)\pmod\ell,
\]
where $p(n/2)=0$ if $n$ is odd.
\end{corollary}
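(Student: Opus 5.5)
We realize $\sum \spt_{\omega}(n)q^n$ as the holomorphic part of a harmonic weak Maass form of weight $3/2$ on $\Gamma_0(2)$, apply Theorem \ref{thm_main3} with $g(\tau)=\eta(2\tau)$, and then obtain the second congruence from Corollary \ref{coro_type2}. Comparing \eqref{eq_sptw} with \eqref{eq_genspt}, the Appell--Lerch parts match after $q\mapsto q^2$ in the sum $\sum (-1)^n q^{n(3n+1)/2}(1+q^n)/(1-q^n)^2$. The divisor-sum part $\frac{1}{(q^2;q^2)_\infty}\sum nq^n/(1-q^n)$ differs from the $\spt$ case, however. Since $\sum_{n\ge1} nq^n/(1-q^n)=(1-E_2(\tau))/24$, the natural completion is
\[
\mathcal{A}_{\omega}(\tau):=\sum_{n\geq 0}\Big(\spt_{\omega}(n)+\tfrac{12n-1}{24}p(n/2)\Big)q^{n-\frac{1}{12}} + (\text{nonholomorphic Eichler integral of }\eta(2\tau)).
\]
Here the correction $\frac{12n-1}{24}p(n/2)$ is $\frac{1}{2}q\frac{d}{dq}$ applied to $1/\eta(2\tau)$, minus $\frac{1}{24}$ times $1/\eta(2\tau)$. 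This combination cancels the $E_2$ anomaly exactly as in Bringmann's construction \eqref{eq_modspt}.

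First, I would verify that $\mathcal{A}_{\omega}$ is (up to normalization) $\mathcal{A}(2\tau)$ plus a weakly holomorphic correction. The completed function then transforms on $\Gamma_0(2)$ with weight $3/2$, with shadow a multiple of $\eta(2\tau)=\sum_m \big(\tfrac{12}{m}\big)q^{2m^2/24}$. This gives $w=12$, $s=1$, $t/w=1/12$, and $u=1$, so the condition $\big(\frac{su}{\ell}\big)=1$ of Theorem \ref{thm_main3} is automatic. The coefficient $b(\ell m)=\big(\frac{12}{\ell}\big)b(m)$ gives $\chi(\ell)=\big(\frac{3}{\ell}\big)$.

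Second, I would check condition (2) of Theorem \ref{thm_main3}: $\mathcal{A}_{\omega}\cdot\eta(2\tau)$ transforms with weight $2$ and trivial multiplier on $\Gamma_0(2)$, since the multipliers of $\eta(2\tau)$ and its conjugate cancel. I would also check that it has no principal part at the cusps $\infty$ and $0$. At $\infty$ the orders are $-1/12$ and $+1/12$. At $0$ I would use Ligozat's formula \eqref{eq_Ligozat} for the eta factor, together with the known behaviour of $\mathcal{A}$ under $\tau\mapsto -1/\tau$ composed with $\tau\mapsto 2\tau$.

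Theorem \ref{thm_main3} then gives an $h\in M_{\ell+1}(\Gamma_0(2))$, namely $\pi_{\hol}(\mathcal{A}_{\omega}g)-\big(\frac{-12}{\ell}\big)\pi_{\hol}([\mathcal{A}_{\omega},g]_v)$ reduced mod $\ell$. This $h$ satisfies the displayed congruence. The terms $p(n/2)$ drop out of the statement because $\frac{12n-1}{24}$ is killed modulo $\ell$ along $\ell n-\frac{\ell^2-1}{12}$ after the index shift. More precisely, $12(\ell n-\delta)-1\equiv 0\pmod \ell$.

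To see that $h$ is cuspidal, I would mimic Lemma \ref{lem_cuspII}. At each cusp the leading coefficient of $fg-\big(\frac{-sw}{\ell}\big)[f,g]_v$ is $a(0)b(0)\big(1-\big(\frac{sw\alpha}{\ell}\big)\big)$. At $\infty$ this gives $1-\big(\frac{12\cdot (1/12)}{\ell}\big)=0$. At $0$ one also gets a square (the width-adjusted order is $1/24\cdot 2$ or similar), after which the $q$-expansion principle applies. Finally, $N=2$ has genus zero, so Corollary \ref{coro_type2} yields the $\ell^2$ congruence with $s(t-wn)=1-12n$ and $\chi(\ell)=\big(\frac{3}{\ell}\big)$, applied to the coefficients $\spt_{\omega}(n)+\frac{12n-1}{24}p(n/2)$. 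The main obstacle is the first step: pinning down the exact completion and the weakly holomorphic correction, especially its behaviour at the cusp $0$. I would handle it by expressing everything through $\mathcal{A}(2\tau)$ and the $E_2$-identity.
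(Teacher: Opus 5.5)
Your plan is essentially the paper's proof: the paper sets $\mathcal{A}_\omega(\tau)=\mathcal{A}(2\tau)-\frac{E_2(\tau)-2E_2(2\tau)}{24\eta(2\tau)}$ (a weakly holomorphic correction, since $E_2(\tau)-2E_2(2\tau)\in M_2(\Gamma_0(2))$), applies Theorem~\ref{thm_main3} with $g=\eta(2\tau)$, gets cuspidality from Lemma~\ref{lem_cuspII} at the cusp $0$ plus the $q$-expansion principle, and finishes with Corollary~\ref{coro_type2}. To tighten your loose ends: at $0$ the exponent to use is the $q$-exponent $-1/48$ (from $\eta(-2/\tau)=\sqrt{-i\tau/2}\,\eta(\tau/2)$), so $sw\alpha=1/4$ is a square; the correction series is exactly $\frac12 D\bigl(1/\eta(2\tau)\bigr)=-E_2(2\tau)/(24\eta(2\tau))$, with no further $-\frac1{24}\cdot\frac{1}{\eta(2\tau)}$ term; and your observation that $12(\ell n-\frac{\ell^2-1}{12})-1\equiv 0\pmod\ell$, which removes the $p(n/2)$ terms, is a step the paper leaves implicit.
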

\begin{proof}
Let $S(q):=\sum_{n\geq 0}\spt(n)q^n$ and $S_{\omega}(q):=\sum_{n\geq 0}\spt_{\omega}(n)q^n$  be the generating functions of the two smallest part functions. Note the basic identity 
\[
\sum_{n=1}^\infty \frac{nq^n}{1-q^n} = \frac{1-E_2(\tau)}{24}.
\]
Comparing the generating function \eqref{eq_genspt} and \eqref{eq_sptw}, we observe that the difference between $S_{\omega}(q)$ and $S(q^2)$ arises from the quasimodular part:
\begin{equation}\label{eq_Srelation}
S_{\omega}(q) - S(q^2)= \frac{E_2(2\tau) - E_2(\tau)}{24(q^2;q^2)_\infty}.
\end{equation}
From \eqref{eq_modspt}, the holomorphic part of the form $\mathcal{A}(2\tau)$ is given by
\begin{equation}\label{eq_Ftau}
\mathcal{A}^+(2\tau) = q^{-\frac{1}{12}} S(q^2) - \frac{E_2(2\tau)}{12\eta(2\tau)}.
\end{equation}
Substituting \eqref{eq_Srelation} into \eqref{eq_Ftau}, we deduce that
\begin{equation}\label{eq_Ftauex}
\mathcal{A}^+(2\tau) = q^{-\frac{1}{12}} S_{\omega}(q) + \frac{E_2(\tau) - 3E_2(2\tau)}{24\eta(2\tau)}. 
\end{equation}
Now we consider the harmonic weak Maass form:
\[
\mathcal{A}_{\omega}(\tau):=\mathcal{A}(2\tau)-\frac{E_2(\tau) - 2E_2(2\tau)}{24\eta(2\tau)},
\]
and by \eqref{eq_Ftauex}, the holomorphic part of $\mathcal{A}_{\omega}(\tau)$ is 
\[
q^{-\frac{1}{12}} S_{\omega}(q) - \frac{P(q^2)}{24\eta(2\tau)}=\sum_{n\geq 0}\left(\spt_{\omega}(n) + \frac{12n-1}{24}p\left(n/2\right)\right)q^{n-\frac{1}{12}}.
\]
It is a mock modular form of weight $3/2$, and the pair $(\mathcal{A}_{\omega}(\tau),\eta(2\tau))$ belongs to Type III with $N=2$. Theorem \ref{thm_main3} asserts the existence of the modular form $h$. To prove that it is a cusp form, we must calculate the value of this modular form at the different cusps. Since $X_0(2)$ has only two cusps, it suffices to verify that its value at the cusp 0 is 0. 

Using the spectral-theoretic account of holomorphic projection (see \cite{zbMATH06685253}), it is not hard to see 
\[
\pi_{\hol}(f\big|_{k}\gamma)=\pi_{\hol}(f)\big|_{k}\gamma.
\]
This property indicates that we can utilize the modular transformation properties of $\mathcal{A}_{\omega}(\tau)$ to calculate the values of its holomorphic projection at various cusps. By the modular transformation properties of $\mathcal{A}_{\omega}(\tau)$, we know that the invariant order of $\mathcal{A}^{+}_{\omega}(\tau)$ with respect to the cusp 0 is $-1/48$. Consequently, by Lemma \ref{lem_cuspII}, the $q$-expansion of
\[
\pi_{\hol}(\mathcal{A}_{\omega}(\tau)\eta(2\tau))E_{\ell-1}(\tau)-\pi_{\hol}([\mathcal{A}_{\omega}(\tau),\eta(2\tau)]_{(\ell-1)/2})
\]
takes the value 0 at the cusp 0 modulo $\ell$, and thus is congruent to a cusp form by the $q$-expansion principle.
\end{proof}

For the $\overline{\spt}_{\omega}(n)$, Andrews, Dixit, Schultz and Yee \cite[Theorem 5.1]{zbMATH06828531} showed that its generating function also has a representation in terms of Appell-Lerch sums:
\[
\sum_{n \ge 1} \overline{\spt}_{\omega}(n) q^n = \frac{(-q^2; q^2)_{\infty}}{(q^2; q^2)_{\infty}} \sum_{n \ge 1} \frac{n q^n}{1 - q^n} + 2 \frac{(-q^2; q^2)_{\infty}}{(q^2; q^2)_{\infty}} \sum_{n \ge 1} \frac{(-1)^n q^{2n(n+1)}}{(1 - q^{2n})^2}.
\]
This expression demonstrates that the generating function of the $\overline{\spt}_{\omega}(n)$  is a mock modular form. Consequently, we obtain the following result, which is a strengthened version of Corollary \ref{coro_osptw}.
\begin{corollary}
For every prime $\ell\geq 5$, there is a modular form $h(\tau)\in M_{\ell+1}(\Gamma_0(4))$ such that  
\[
\frac{(q^{2\ell};q^{2\ell})_{\infty}^2}{(q^{4\ell};q^{4\ell})_{\infty}}\sum_{n\geq 0}\left(\overline{\spt}_{\omega}\left(\ell n\right)-\frac18\cdot\overline{p}(\ell n/2)\right) q^n\equiv h(\tau)\big|U_{\ell}\pmod{\ell},
\]
and for every $n\geq 0$,
\[
\overline{\spt}_{\omega}\left(\ell^2 n\right)-\frac18\cdot\overline{p}(\ell^2 n/2)\equiv  \left(1-\left(\frac{-2n}{\ell}\right)\right)\left(\overline{\spt}_{\omega}\left(n\right)-\frac18\cdot\overline{p}(n/2)\right)\pmod\ell,
\]
where $\overline{p}(n/2)=0$ if $n$ is odd.
\end{corollary}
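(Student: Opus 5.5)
Following the proof of the $\spt_{\omega}$ corollary, I would first build a harmonic weak Maass form of weight $3/2$ and level $4$ whose holomorphic part is $\sum_{n\ge0}\bigl(\overline{\spt}_{\omega}(n)-\frac18\overline{p}(n/2)\bigr)q^n$, and then apply Theorem \ref{thm_main3} and Corollary \ref{coro_type2} with $g(\tau)=\eta(2\tau)^2/\eta(4\tau)$. This $g$ is on Mersmann's list and equals $\sum_{m\in\mathbb{Z}}(-1)^m q^{2m^2}$. So $t=0$, $w=1$, $s=2$, $b(m)=(-1)^m$, and for odd $\ell$ we get $b(\ell m)=b(m)$, hence $\chi(\ell)=1$.

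For the Maass form, I would use the Appell–Lerch representation of Andrews, Dixit, Schultz and Yee. Its first piece is
\[
\frac{(-q^2;q^2)_\infty}{(q^2;q^2)_\infty}\cdot\frac{1-E_2(\tau)}{24}=\frac{\eta(4\tau)}{\eta(2\tau)^2}\cdot\frac{1-E_2(\tau)}{24},
\]
which is quasimodular. Since $\overline{p}(n/2)$ has generating function $\eta(4\tau)/\eta(2\tau)^2$, the term $-\frac{1}{24}\eta(4\tau)E_2(\tau)/\eta(2\tau)^2$ should be exactly what combines with the completion of the Appell–Lerch sum $\frac{2\eta(4\tau)}{\eta(2\tau)^2}\sum_{n\ge1}(-1)^nq^{2n(n+1)}/(1-q^{2n})^2$. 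The completion would come from Zwegers' $\mu$-function, or by analogy with Bringmann's $\mathcal{A}(\tau)$, whose completion of the $\spt$ Appell–Lerch sum absorbs the $E_2$ term together with an $np(n)$ correction. The leftover constant-times-$E_2$ term is then removed by subtracting a suitable multiple of $E_2(\tau)-cE_2(2\tau)$ or $E_2(4\tau)$ over $g$, to make the result modular. I expect the coefficient $-\frac18$ to appear exactly as in the $\spt_\omega$ case, where the correction was $\frac{12n-1}{24}p(n/2)$. Here $t=0$ means the linear-in-$n$ factor disappears, leaving a constant.

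Next I would check the hypotheses of Theorem \ref{thm_main3}. The shadow is proportional to a unary theta series $\sum(-1)^nq^{2n^2}$ (or with a character), giving $u=2$, and its coefficients are $\ell$-integral. Then $(\frac{su}{\ell})=(\frac{4}{\ell})=1$ holds automatically. I would also verify that $F g$ transforms with weight $2$ on $\Gamma_0(4)$ and has no principal parts at the cusps $\infty,0,1/2$, using Ligozat's formula \eqref{eq_Ligozat} for $g$ together with the transformation law of the completed form at each cusp. Theorem \ref{thm_main3} then gives the first congruence with some $h\in M_{\ell+1}(\Gamma_0(4))$. Since the genus of $X_0(4)$ is zero and $\ell\nmid4$, Corollary \ref{coro_type2} applies and yields
\[
a(\ell^2n)\equiv\Bigl(1-\Bigl(\tfrac{2(0-n)}{\ell}\Bigr)\Bigr)a(n)\pmod\ell,
\]
which is the second assertion. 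For $\ell=3$ the corollary also needs the explicit check mentioned in Proposition \ref{prop_ul^2}; since the statement assumes $\ell\ge5$, this case is not needed.

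The main obstacle is constructing the completion precisely. Two points must come out right: the exact quasimodular correction that produces the coefficient $-\frac18$, and the absence of principal parts of $Fg$ at the cusps $0$ and $1/2$. For the first, I would match against $\mathcal{A}$ via the $\mu$-function and read off the $E_2$ constants by comparing a few $q$-coefficients numerically.
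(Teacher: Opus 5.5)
\textbf{Where the proposal matches the paper.} Your reduction is the paper's. With $g(\tau)=\eta(2\tau)^2/\eta(4\tau)=\sum_m(-1)^mq^{2m^2}$ you correctly get $s=2$, $w=1$, $t=0$, $\chi(\ell)=1$, and $u=2$ with $\left(\frac{su}{\ell}\right)=1$. Theorem \ref{thm_main3} and Corollary \ref{coro_type2} on $\Gamma_0(4)$ then give both congruences, with $\left(\frac{s(t-wn)}{\ell}\right)=\left(\frac{-2n}{\ell}\right)$.

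\textbf{The gap.} It is the step you flag yourself. Nothing in the proposal shows that $\overline{S}_{\omega}(q)-\frac18\cdot\frac{\eta(4\tau)}{\eta(2\tau)^2}$ is the holomorphic part of a harmonic Maass form satisfying the Type III hypotheses, and the routes you suggest toward it do not work as stated. Bringmann's $\mathcal{A}$ completes a different Appell--Lerch sum, so there is nothing to match it against. Comparing a few coefficients numerically cannot prove a mock modular identity. A Zwegers-type derivation is possible in principle, but it amounts to reproving a published theorem, and you do not carry it out.

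Your heuristic for the constant also points the wrong way. In the $\spt_\omega$ case the correction $\frac{12n-1}{24}p(n/2)$ comes from
\[
-\frac{E_2(2\tau)}{24\eta(2\tau)}=\tfrac12\D\bigl(1/\eta(2\tau)\bigr).
\]
The analogous term here is $\D\bigl(\eta(4\tau)/\eta(2\tau)^2\bigr)=\sum_n n\,\overline{p}(n/2)q^n$. When $t=0$ this is purely linear in $n$ and has no constant part, so it cannot produce $-\frac18$.

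\textbf{How the paper fills it.} The paper uses the Bringmann--Lovejoy--Osburn form $\mathcal{M}(\tau)$ of \eqref{eq_Harspto}. It completes
\[
M(q)=\frac{(-q;q)_\infty}{(q;q)_\infty}\sum_{n\ge1}\frac{(-1)^nq^{n(n+1)}}{(1-q^n)^2}
\]
with the explicit correction $-\frac{(-q;q)_\infty}{(q;q)_\infty}\bigl(\frac1{24}+\frac1{12}E_2(2\tau)\bigr)$, and its shadow is proportional to $\eta(\tau)^2/\eta(2\tau)$. The Type III property of the pair $(\mathcal{M},\eta(\tau)^2/\eta(2\tau))$, including the absence of principal parts you planned to check cusp by cusp, is quoted from the literature. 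Substituting $2M(q^2)$ into the Andrews--Dixit--Schultz--Yee formula gives
\[
\overline{S}_{\omega}(q)=2\mathcal{M}^{+}(2\tau)+\frac{3-E_2(\tau)+4E_2(4\tau)}{24}\cdot\frac{\eta(4\tau)}{\eta(2\tau)^{2}}.
\]
The sign in front of the correction is $+$; the paper's \eqref{eq_So_isolated} has a sign slip.

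Since $E_2(\tau)-4E_2(4\tau)\in M_2(\Gamma_0(4))$, the form
\[
\overline{\mathcal{M}}_{\omega}=2\mathcal{M}(2\tau)-\frac{E_2(\tau)-4E_2(4\tau)}{24}\cdot\frac{\eta(4\tau)}{\eta(2\tau)^2}
\]
is still harmonic of weight $3/2$ with shadow proportional to $g$. Its holomorphic part is $\overline{S}_{\omega}(q)-\frac{3}{24}\cdot\frac{\eta(4\tau)}{\eta(2\tau)^2}$.

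So $\frac18=\frac1{24}+\frac2{24}$ is a sum of two pure constants: the $1$ in $\frac{1-E_2(\tau)}{24}$, and twice the $\frac1{24}$ in $\mathcal{M}$. Every $E_2$-term assembles into a genuine weight-$2$ modular form and is subtracted off. With this input in place, the rest of your argument goes through as you wrote it.
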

\begin{proof}
It was proved in \cite[Theorem 4.1]{zbMATH05665740} that the function
\begin{equation}\label{eq_Harspto}
\begin{aligned}
\mathcal{M}(\tau)&:=\frac{(-q;q)_{\infty}}{(q;q)_{\infty}}\sum_{n\geq 1}\frac{(-1)^nq^{n(n+1)}}{(1-q^n)^2}-\frac{(-q;q)_{\infty}}{(q;q)_{\infty}}\left(\frac{1}{24}+\frac{1}{12}E_2(2\tau)\right)\\
&+\frac{1}{4\sqrt{2}\pi i}\int_{-\bar{z}}^{i\infty}\frac{\eta(\tau)^2/\eta(2\tau)}{(-i(\tau+z))^{3/2}}d\tau.
\end{aligned}
\end{equation}
is a harmonic weak Maass form of weight $3/2$ on $\Gamma_0(16)$ with the $\theta$ multiplier, and from \cite[Proposition 6]{zbMATH06398837}, the pair $(\mathcal{M}(\tau),\eta^2(\tau)/\eta(2\tau))$ belongs to Type III with $N=2$.   We utilize these facts to establish the exact modularity for the $\overline{\spt}_{\omega}$-function. 

Define $\mathcal{M}^{+}(\tau)$ as the holomorphic part of $\mathcal{M}(\tau)$. Let $\overline{S}_{\omega}(q)$ be the generating function for $\overline{\spt}_{\omega}(n)$. We can rewrite this generating function as
\begin{equation}\label{eq_So_rewrite}
\overline{S}_{\omega}(q) = \frac{\eta(4\tau)}{\eta(2\tau)^2} \frac{1-E_2(\tau)}{24} + 2M(q^2).
\end{equation}
where $M(q)$ denotes the Appell-Lerch sum. Isolating $2M(q^2)$ from \eqref{eq_Harspto} and substituting it into \eqref{eq_So_rewrite} yields
\begin{equation}\label{eq_So_isolated}
\overline{S}_{\omega}(q) = 2\mathcal{M}^+(2\tau) - \frac{3 - E_2(\tau) + 4E_2(4\tau)}{24} \frac{\eta(4\tau)}{\eta(2\tau)^2} . 
\end{equation}
Now we consider the harmonic weak Maass form
\[
\overline{\mathcal{M}}_{\omega}(\tau):=2\mathcal{M}(2\tau)-\frac{E_2(\tau)-4E_2(4\tau)}{24}  \frac{\eta(4\tau)}{\eta(2\tau)^2}, 
\]
by \eqref{eq_So_isolated}, whose holomorphic part is given by 
\[
\overline{S}_{\omega}(q) - \frac{1}{8}\frac{\eta(4\tau)}{\eta(2\tau)^2}=\sum_{n\geq 0} \left(\overline{\spt}_{\omega}(n) - \frac{1}{8}\cdot\overline{p}\left(n/2\right)\right)q^n.
\]
Thus the pair $(\overline{\mathcal{M}}_{\omega}(\tau),\eta(2\tau)^2/\eta(4\tau))$ with $N=4$ belongs to Type III. By Theorem \ref{thm_main3} and Corollary \ref{coro_type2}, we immediately obtain the conclusion in the corollary.
\end{proof}
It is worth noting that the above results yield an interesting congruence relation: 
\[
8\cdot \overline{\spt}_{\omega}\left(2\ell^2 n\right)\equiv \overline{p}(\ell^2 n)\pmod{\ell}, \text{ if }\left(\frac{-n}{\ell}\right)=1.
\]
By applying the method presented in this paper, we can obtain further congruences of this type. We refrain from providing more examples here and turn instead to arithmetic examples.

The Hurwitz class number $H(N)$ is defined by the class number of positive definite binary quadratic forms of discriminant $–N$, where forms are weighted by $2/g$ for $g$ the order of their automorphism group. It has the following connection with the $\theta(\tau)^3$  discussed in Corollary \ref{coro_theta3}, which can be traced back to Gauss:
\[
\theta(\tau)^3=12\sum_{n\geq 0}\left(H(4n)-2H(n)\right)q^n.
\]
A famous result by Zagier \cite{zbMATH03505081} states that the generating function of class numbers $\sum_{n\geq 0}H(n)q^n$ with $H(0):=-1/12$ is a mock modular form of weight $3/2$. The harmonic Maass form corresponding to this generating function is
\[
\mathcal{H}(\tau) = \sum_{n \ge 0} H(n) q^n + \frac{1}{8\pi\sqrt{y}} + \frac{1}{4\sqrt{\pi}} \sum_{n \ge 1} n \Gamma \left( -\frac{1}{2}, 4\pi n^2 y \right) q^{-n^2},
\]
and it is of level 4 with the shadow $\theta(\tau)$. Taking $g(\tau)=\theta(\tau)$, by Theorem \ref{thm_main3} and Corollary \ref{coro_type2}, we have
\begin{corollary}
For every prime $\ell\geq 3$, there is a modular form $h(\tau)\in M_{\ell+1}(\Gamma_0(4))$ such that  
\[
\theta(\ell^2\tau) \sum_{n\geq 0}H(\ell n)q^n\equiv h(\tau)\big|U_{\ell}\pmod{\ell},
\]
and for every $n\geq 0$,
\[
H(\ell^2n)\equiv\left(1-\left(\frac{-n}{\ell}\right)\right)H(n)\pmod\ell.
\]
\end{corollary}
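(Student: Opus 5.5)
The plan is to apply Theorem \ref{thm_main3} and Corollary \ref{coro_type2} to the pair $(\mathcal{H}(\tau),\theta(\tau))$ at level $N=4$. For this pair, $f=\sum H(n)q^n$ has $t=0$ and $w=1$. Since $a(n)=H(n)$ has denominators only $2$ and $3$ (from $H(0)=-1/12$ and the weights $1/2,1/3$), its coefficients lie in $\mathbb{Z}_{(\ell)}$ for $\ell\ge 5$. The case $\ell=3$ will be handled by the same argument after multiplying through by $12$, or checked separately.

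\textbf{Checking the hypotheses.} For condition (1), the shadow of $\mathcal{H}$ is a constant multiple of $\theta$, a unary theta series with $u=1$ and trivial $\psi$. After normalising $\mathcal{H}$, which only rescales by units away from $2$, the coefficients are $\ell$-integral. For condition (3), $\theta=\eta(2\tau)^5/(\eta(\tau)^2\eta(4\tau)^2)$ is a holomorphic weight-$1/2$ eta-quotient of level $4$. For condition (4), $\theta=\sum_m q^{m^2}$, so $s=1$, $b(m)=1$ and $\chi(\ell)=1$; the requirement $\left(\frac{su}{\ell}\right)=\left(\frac{1}{\ell}\right)=1$ holds trivially.

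Condition (2) takes the most care. The product $\mathcal{H}\theta$ transforms with weight $2$ on $\Gamma_0(4)$ and trivial multiplier, because the theta multipliers of weight $3/2$ and $1/2$ cancel. I then need $\mathcal{H}\theta$ to have no principal part at the cusps $0$ and $1/2$. I would verify this with the known transformation of Zagier's form under $\tau\mapsto -1/4\tau$ together with the Ligozat formula \eqref{eq_Ligozat} for $\theta$. Both $\mathcal{H}$ and $\theta$ are holomorphic at every cusp, with only constant terms and growing non-holomorphic pieces and no negative $q$-powers, so the product has no poles. This is the step I expect to be the real obstacle. The non-holomorphic part $\frac{1}{8\pi\sqrt{y}}$ gives a nonzero $c_f^-(0)$, and I must check that Mertens' holomorphic projection formula invoked in the proof of Theorem \ref{thm_main3} still applies with this constant-term shadow contribution. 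Since $\psi(0)\neq 0$, the extra $\overline{\psi(0)}\sum b(m)(\sqrt{s}m)^{2v+1}q^{sm^2}$ term appears, and it is handled by the same Fermat reduction. So the hypothesis $\left(\frac{su}{\ell}\right)=1$ suffices.

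\textbf{Deducing the two assertions.} With the hypotheses in place, Theorem \ref{thm_main3} gives $h\in M_{\ell+1}(\Gamma_0(4))$ with
\[
\theta(\ell^2\tau)\sum_{n\geq 0}H(\ell n)q^n\equiv h\big|U_{\ell}\pmod{\ell},
\]
because $\prod(q^{\ell\delta};q^{\ell\delta})^{r_\delta}_\infty=\theta(\ell\tau)$ and, by \eqref{eq_fgUl}, the form is really $(f(\tau)\theta(\ell^2\tau))|U_\ell$. Since $X_0(4)$ has genus zero, $4\neq 25$ and $\ell\nmid 4$, Corollary \ref{coro_type2} applies with $t=0$, $w=s=1$ and $\chi(\ell)=1$. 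It gives
\[
H(\ell^2 n)\equiv \left(1-\left(\frac{-n}{\ell}\right)\right)H(n)\pmod{\ell}.
\]
As a sanity check independent of the above, $\mathcal{H}$ is a Hecke eigenform for $T_{\ell^2}$, with relation $H(\ell^2n)+\left(\frac{-n}{\ell}\right)H(n)+\ell H(n/\ell^2)=(\ell+1)H(n)$, which reduces modulo $\ell$ to the same congruence.
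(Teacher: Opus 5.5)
Your route is the same as the paper's: apply Theorem~\ref{thm_main3} and Corollary~\ref{coro_type2} to $(\mathcal{H},\theta)$ at level $4$, with $t=0$, $s=w=u=1$ and $\chi(\ell)=1$. Two steps, however, do not go through as written.

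\textbf{The first display.} Your derivation is a non sequitur. You correctly compute $\prod_{\delta}(q^{\ell\delta};q^{\ell\delta})_\infty^{r_\delta}=\theta(\ell\tau)$. By \eqref{eq_fgUl} and \eqref{eq_propertyUl},
\[
(f(\tau)\theta(\ell^2\tau))\big|U_\ell=\theta(\ell\tau)\sum_{n\ge0}H(\ell n)q^n,
\]
because $\theta(\ell^2\tau)$ is a series in $q^\ell$, and $U_\ell$ pulls it out as $\theta(\ell\tau)$, not as $\theta(\ell^2\tau)$. So Theorem~\ref{thm_main3} gives $\theta(\ell\tau)\sum H(\ell n)q^n\equiv h|U_\ell$. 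Replacing $\theta(\ell\tau)$ by $\theta(\ell^2\tau)$ changes the left side already at $q^\ell$, by $2H(0)q^\ell=-\tfrac16 q^\ell$, which is an $\ell$-adic unit for $\ell\ge5$. Hence the displayed congruence does not follow with the same $h$, and nothing in your argument produces a different one. The $\theta(\ell^2\tau)$ in the statement, as in the paper's $r_3$ corollary, is evidently a misprint for $\theta(\ell\tau)$. You should flag it rather than justify it.

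\textbf{The case $\ell=3$.} This cannot be handled as you propose. Theorem~\ref{thm_main3} requires $a(n)\in\mathbb{Z}_{(\ell)}$, but $H(0)=-1/12$ and $H(3)=1/3$ are not in $\mathbb{Z}_{(3)}$. Multiplying through by $12$ does not help, since $12$ is not a $3$-adic unit; you would only obtain a congruence for $12H(n)$, which is strictly weaker. A separate check must fail, because the second assertion is false at $\ell=3$. For $n=3$ we have $\left(\frac{-3}{3}\right)=0$, $H(27)=4/3$ and $H(3)=1/3$, so the two sides differ by $1$. Your own Hecke relation shows why: it is equivalent to
\[
H(\ell^2n)-\left(1-\left(\tfrac{-n}{\ell}\right)\right)H(n)=\ell\bigl(H(n)-H(n/\ell^2)\bigr).
\]
The right side is $\equiv0\pmod\ell$ exactly when the $H$-values involved are $\ell$-integral, which holds for all $n$ when $\ell\ge5$, but not for $\ell=3$. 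So the corollary must be restricted to $\ell\ge5$; the paper's one-line derivation via Theorem~\ref{thm_main3} has the same defect. For $\ell\ge5$, your ``sanity check'' is in fact already a complete and elementary proof of the second assertion. It needs neither the $U_\ell^2\equiv\mathrm{Id}$ proposition nor holomorphic projection.
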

Similar to the case of $r_3(n)$, the origin of the second congruence above lies in the behavior of $H(n)$ with respect to the Hecke operators:
\[
H(\ell^2n)+ \left(\frac{-n}{\ell}\right)H(n) + \ell H(n/\ell^2) =(\ell+1)H(n).
\]

Alfes, Bringmann and Lovejoy proved that the generating function of Hurwitz class numbers defined on the arithmetic progression $8n-1$ is also mock modular form of weight 3/2 \cite{zbMATH05967727}:
\[
\sum_{n\geq 1}H(8n-1)q^{n-\frac{1}{8}}=\frac{\eta(\tau)}{\eta(2\tau)^2}\sum_{n\in\mathbb{Z}}\frac{q^{2n^2+3n+1}}{(1-q^{2n+1})^2},
\]
and its completion 
\[
\mathcal{H}_{8,-1}(\tau):=\frac{\eta(\tau)}{\eta(2\tau)^2}\sum_{n\in\mathbb{Z}}\frac{q^{2n^2+3n+1}}{(1-q^{2n+1})^2}+\frac{1}{2\pi i}\int_{-\bar{z}}^{i\infty}\frac{\eta(\tau)^2/\eta(2\tau)}{(-i(\tau+z))^{3/2}}d\tau
\]
satisfies the conditions in Theorem \ref{thm_main3} with $g(\tau)=\eta(2\tau)^2/\eta(\tau)$ and $N=2$ (see \cite[Proposition 6]{zbMATH06398837}). We obtain the following results concerning the congruence of the Hurwitz class numbers:
\begin{corollary}
For every prime $\ell\geq 3$, there is a modular form $h(\tau)\in M_{\ell+1}(\Gamma_0(2))$ such that  
\[
\frac{(q^{2\ell};q^{2\ell})_{\infty}^2}{(q^{\ell};q^{\ell})_{\infty}} \sum_{n\geq 0}H(8\ell n-\ell^2 )q^n\equiv h(\tau)\big|U_{\ell}\pmod{\ell},
\]
and
\[
\sum_{n\geq 0}H(8\ell n-\ell^2 )q^n\not\equiv 0\pmod{\ell},
\]
and for every $n\geq 0$,
\[
H(\ell^2(8n-1))\equiv\left(1-\left(\frac{1-8n}{\ell}\right)\right)H(8n-1)\pmod\ell.
\]

\end{corollary}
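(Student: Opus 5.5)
This corollary is proved by applying Theorem \ref{thm_main3} and Corollary \ref{coro_type2} to the pair $(\mathcal{H}_{8,-1},g)$ with $g(\tau)=\eta(2\tau)^2/\eta(\tau)$ and $N=2$. The non-vanishing assertion is then handled separately by the cusp-value argument used for $\operatorname{pod}$ and in Corollary \ref{coro_inconforTypeI}.

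\textbf{Setting up the data.} Here $f(\tau)=\sum_{n\geq 1}H(8n-1)q^{n-1/8}$, so the exponents are $n-1/8$ with $t/w=1/8$, i.e.\ $t=1$, $w=8$. The theta expansion is
\[
\eta(2\tau)^2/\eta(\tau)=\sum_{m\geq 0}q^{m(m+1)/2}=q^{-1/8}\sum_{m\ \mathrm{odd},\,m>0}q^{m^2/8}.
\]
After the normalization $q^{sm^2/w}$ with $s=1$, this gives $b(m)=1$ for $m$ odd and positive and $b(m)=0$ otherwise. For odd $\ell$ we have $b(\ell m)=b(m)$, so $\chi(\ell)=1$.

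\textbf{Checking Theorem \ref{thm_main3}.} The shadow is proportional to $\eta(\tau)^2/\eta(2\tau)=\sum_n(-1)^nq^{n^2}$. This is a single theta function with $u/w=1$, so $u=8$. The condition $\left(\frac{su}{\ell}\right)=\left(\frac{8}{\ell}\right)=1$ fails for $\ell\equiv\pm3\pmod 8$, so I would not use the literal statement. Instead I would rework the congruence $\Lambda_{s,u}^{\psi}(\tau;v)\equiv\Lambda_{s,u}^{\psi}(\tau;0)$ from the proof, writing $\sqrt{s}m-\sqrt{u}n=(m-2\sqrt2 n)$. The cleanest fix is to rescale both exponents so that $sm^2/w$ becomes $m^2/8$ and $un^2/w$ becomes $(2n)^2/8$. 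Then $\sqrt{u}n=2\sqrt{2}n$ should be read as a single odd-power factor, and after reducing $x^{\ell}\equiv x$ on $\mathbb{Z}[\sqrt2]$ modulo $\ell$ I expect the Frobenius twist $\sqrt2\mapsto\left(\frac{2}{\ell}\right)\sqrt2$. The powers $(m-2\sqrt2n)^{\ell}$ become $m-\left(\frac{2}{\ell}\right)2\sqrt2 n$ mod $\ell$. Since only even powers of $\sqrt2$ survive in the integer $q$-coefficients $\sum(\cdot)$ combined with the symmetric $m$-sum, the Legendre sign should drop out.

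This verification is the step I expect to be the main obstacle. If it does not go through, I would fall back on Lemma \ref{lemma_fgUl} directly, using the exact modularity guaranteed by \cite[Proposition 6]{zbMATH06398837}. With condition (2) given by that reference, the proof of Theorem \ref{thm_main3} produces $h\in M_{\ell+1}(\Gamma_0(2))$ with
\[
(f(\tau)g(\ell^2\tau))|U_\ell\equiv h|U_\ell .
\]
The product expansion of $g(\ell^2\tau)$ together with \eqref{eq_propertyUl} turns the left side into
\[
\frac{(q^{2\ell};q^{2\ell})^2_\infty}{(q^\ell;q^\ell)_\infty}\sum_n a\bigl(\ell n-\tfrac{\ell^2-1}{8}\bigr)q^n,
\]
and $a(\ell n-(\ell^2-1)/8)=H(8\ell n-\ell^2)$. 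This gives the first congruence.

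\textbf{Third congruence.} Since $X_0(2)$ has genus zero and $2\neq25$, Corollary \ref{coro_type2} applies. It gives
\[
a\bigl(\ell^2n-\tfrac{\ell^2-1}{8}\bigr)\equiv\left(1-\left(\frac{1-8n}{\ell}\right)\right)a(n),
\]
which translates to $H(\ell^2(8n-1))\equiv(1-(\frac{1-8n}{\ell}))H(8n-1)$.

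\textbf{Non-vanishing.} Suppose the series vanished mod $\ell$. Then $h|U_\ell\equiv0$, so every cusp value of $h|U_\ell$ would vanish mod $\ell$ by the $q$-expansion principle. By \eqref{eq_hecek}, with $T_\ell\equiv U_\ell$, the value at the cusp $0$ equals that of $h$ at $0$. I would compute this value with Lemma \ref{lem_cuspII}, using $\pi_{\hol}(F|\gamma)=\pi_{\hol}(F)|\gamma$ as in the $\spt_\omega$ proof. By Ligozat's formula \eqref{eq_Ligozat}, $g$ has order $0$ at the cusp $0$ (the contributions $-1+2\cdot\frac14\cdot 2$ cancel), so $\mathcal{H}_{8,-1}$ also has order $0$ there. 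With $\alpha=0$ the leading coefficient is $a(0)b(0)(1-0)\neq0$, provided the constant term of $\mathcal{H}_{8,-1}$ at $0$, which I expect to be related to $H(0)$-type constants, is an $\ell$-unit. This gives the contradiction.
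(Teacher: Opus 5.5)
The obstacle you flag for $\ell\equiv\pm3\pmod 8$ comes from misidentifying the shadow. As written, your argument therefore does not establish the first congruence for those primes, and hence not the third either.

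The shadow of $\mathcal{H}_{8,-1}$ cannot be $\eta(\tau)^2/\eta(2\tau)=\sum_n(-1)^nq^{n^2}$. The Eichler integral of that series has exponents in $\mathbb{Z}$, while $\sum_{n\ge1}H(8n-1)q^{n-1/8}$ has exponents in $-\frac18+\mathbb{Z}$, so the integrand in the displayed completion is a slip. The correct shadow can be found from three facts: $\mathcal{H}_{8,-1}(-1/\tau)=-(-i\tau)^{3/2}\mathcal{M}(\tau/2)$, the shadow of $\mathcal{M}$ is $\eta(\tau)^2/\eta(2\tau)$, and $\eta(-1/(2\tau))^2/\eta(-1/\tau)=2\sqrt{-i\tau}\,\eta(2\tau)^2/\eta(\tau)$. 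Together they show the shadow is a multiple of $\eta(2\tau)^2/\eta(\tau)=\sum_{m\ \mathrm{odd},\,m>0}q^{m^2/8}$. Your expansion of $g$ dropped the factor $q^{1/8}$ here.

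So $g$ is itself the shadow, $u=s=1$, and $\left(\frac{su}{\ell}\right)=1$ for every odd $\ell$. Theorem \ref{thm_main3} and Corollary \ref{coro_type2} then apply verbatim, which is why the paper calls these two assertions immediate.

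Your repair would not have worked even if $su$ really were $8$:
\begin{itemize}
\item Then $m^2-8n^2=r$ is a Pell-type equation with infinitely many solutions, so $\Lambda_{s,u}^{\psi}$ cannot be reduced modulo $\ell$ term by term.
\item The sums over Pell orbits converge only as a whole, so there is no separate $\sqrt2$-component whose sign $\left(\frac{2}{\ell}\right)$ could \emph{drop out}.
\item Your fallback is circular: the proof of Theorem \ref{thm_main3} produces $h$ only through the congruence $\left(\frac{s}{\ell}\right)\Lambda_{s,u}^{\psi}(\tau;v)\equiv\Lambda_{s,u}^{\psi}(\tau;0)$, which is exactly what you could not verify.
\end{itemize}

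For the non-vanishing, your route is the paper's: the cusp $0$, \eqref{eq_hecek} with $1/\ell$ equivalent to $0$ on $\Gamma_0(2)$, Lemma \ref{lem_cuspII}, and the $q$-expansion principle. However, the decisive input is missing. From $\ord_0(g)=0$ and the absence of a principal part of $Fg$ at $0$ you only get $\ord_0(\mathcal{H}_{8,-1}^{+})\ge 0$. If that order were positive, the constant term would vanish and the argument would collapse, so you must prove the constant term is an $\ell$-unit rather than expect it.

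The paper reads this off from $\mathcal{H}_{8,-1}(-1/\tau)=-(-i\tau)^{3/2}\mathcal{M}(\tau/2)$. The constant term of $\mathcal{M}^{+}$ is $-\left(\frac{1}{24}+\frac{1}{12}\right)=-\frac18$, so $\mathcal{H}_{8,-1}^{+}$ has value $\frac18$ at $0$, and $g$ has value $\frac12$ there. With $\alpha=\beta=0$, every term of the bracket has zero constant term at $0$. Hence $fg-\left(\frac{-sw}{\ell}\right)[f,g]_{(\ell-1)/2}$ has constant term $\frac{1}{16}\not\equiv 0\pmod\ell$ at $0$, as required.
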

\begin{proof}
The only result in the corollary that cannot be obtained directly is the second non-congruence result. We can prove this by calculating the value of $h\big|U_{\ell}$ at the cusp 0 and utilizing the $q$-expansion principle as in the proof of Corollary \ref{coro_inconforTypeI}. By the proof of \cite[Lemma 6.1]{zbMATH05665740}, it related to the $\mathcal{M}(\tau)$ defined in \eqref{eq_Harspto} by
\[
\mathcal{H}_{8,-1}\left(-1/\tau\right)=-(-i\tau)^{\frac{3}{2}}\mathcal{M}(\tau/2).
\]
By this  transformation property, the value of the holomorphic part of  $\mathcal{H}_{8,-1}(\tau)$ at the cusp 0 is $1/8$. Simultaneously, from the transformation properties of the Dedekind eta function, we know that the value of $g(\tau)$ at the cusp 0 is $1/2$. Therefore, by Lemma \ref{lem_cuspII}, we conclude that $h\big|U_{\ell}$ is non-zero modulo $\ell$ at the cusp 0. This completes the proof.

It is worth mentioning that this specific non-congruence is also consistent with the  results of \cite{zbMATH07936785}, who classified such non-holomorphic congruences under certain regularity conditions. Our approach here provides an independent derivation.
\end{proof}

\appendix
\section{Primitive eta-quotients of weight 1/2}

\begin{table}[htbp]
\centering
\renewcommand{\arraystretch}{2.0}
\caption{The 14 primitive eta-quotients and their explicit $q$-series expansions}
\label{tab:eta_products_qseries}
\begin{tabular}{cccc}
\toprule
$w$ & $\eta$-quotient & $q$-series expansion & $\chi(\ell)~(\ell\geq 5)$ \\
\midrule
$1$ 
& $\dfrac{\eta(2\tau)^5}{\eta(\tau)^2\eta(4\tau)^2}$ 
& $\displaystyle \sum_{n \in \mathbb{Z}} q^{n^2}$ 
& $1$ \\

& $\dfrac{\eta(\tau)^2}{\eta(2\tau)}$ 
& $\displaystyle \sum_{n \in \mathbb{Z}} (-1)^n q^{n^2}$ 
& $1$ \\

& $\dfrac{\eta(2\tau)^2\eta(3\tau)}{\eta(\tau)\eta(6\tau)}$ 
& $\displaystyle \sum_{n \in \mathbb{Z}} \cos\left(\dfrac{n\pi}{3}\right) q^{n^2}$ 
& $1$ \\

& $\dfrac{\eta(\tau)\eta(4\tau)\eta(6\tau)^2}{\eta(2\tau)\eta(3\tau)\eta(12\tau)}$ 
& $\displaystyle \sum_{n \in \mathbb{Z}}\cos\left(\frac{2n\pi}{3}\right) q^{n^2}$ 
& $1$ \\
\midrule

$3$ 
& $\dfrac{\eta(2\tau)^2\eta(3\tau)\eta(12\tau)}{\eta(\tau)\eta(4\tau)\eta(6\tau)}$ 
& $\dfrac{1}{2} \displaystyle \sum_{n \in \mathbb{Z}} \chi_0^{(3)}(n) q^{n^2/3}$ 
& $1$ \\

& $\dfrac{\eta(\tau)\eta(6\tau)^2}{\eta(2\tau)\eta(3\tau)}$ 
& $\dfrac{1}{2} \displaystyle \sum_{n \in \mathbb{Z}} (-1)^{n-1} \chi_0^{(3)}(n) q^{n^2/3}$
& $1$ \\
\midrule

$8$ 
& $\dfrac{\eta(2\tau)^2}{\eta(\tau)}$ 
& $\dfrac{1}{2} \displaystyle \sum_{n \in \mathbb{Z}} \chi_0^{(2)}(n) q^{n^2/8}$ 
& $1$ \\

& $\dfrac{\eta(\tau)\eta(4\tau)}{\eta(2\tau)}$ 
& $\dfrac{1}{2} \displaystyle \sum_{n \in \mathbb{Z}} \left(\dfrac{2}{n}\right) q^{n^2/8}$
& $\left(\dfrac{2}{\ell}\right)$  \\

& $\dfrac{\eta(2\tau)^5\eta(3\tau)\eta(12\tau)}{\eta(\tau)^2\eta(4\tau)^2\eta(6\tau)^2}$ 
& $\dfrac{1}{2} \displaystyle \sum_{n \in \mathbb{Z}} \left(3\chi_0^{(3)}(n)-2\right)\left(\dfrac{2}{n}\right) q^{n^2/8}$ 
& $\left(\dfrac{2}{\ell}\right)$ \\

& $\dfrac{\eta(\tau)^2\eta(6\tau)}{\eta(2\tau)\eta(3\tau)}$ 
& $\dfrac{1}{2} \displaystyle \sum_{n \in \mathbb{Z}} \left(3\chi_0^{(3)}(n)-2\right)\chi_0^{(2)}(n)q^{n^2/8}$ 
& $1$ \\
\midrule

$24$ 
& $\eta(\tau)$ 
& $\dfrac{1}{2} \displaystyle \sum_{n \in \mathbb{Z}} \left(\dfrac{12}{n}\right) q^{n^2/24}$  
& $\left(\dfrac{3}{\ell}\right)$ \\

& $\dfrac{\eta(2\tau)\eta(3\tau)^2}{\eta(\tau)\eta(6\tau)}$ 
& $\dfrac{1}{2} \displaystyle \sum_{n \in \mathbb{Z}} \chi_0^{(12)}(n) q^{n^2/24}$ 
& $1$ \\

& $\dfrac{\eta(2\tau)^3}{\eta(\tau)\eta(4\tau)}$ 
& $ \dfrac{1}{2} \displaystyle \sum_{n \in \mathbb{Z}} \left(\dfrac{6}{n}\right) q^{n^2/24}$ 
& $\left(\dfrac{6}{\ell}\right)$ \\

& $\dfrac{\eta(\tau)\eta(4\tau)\eta(6\tau)^5}{\eta(2\tau)^2\eta(3\tau)^2\eta(12\tau)^2}$ 
& $\dfrac{1}{2} \displaystyle \sum_{n \in \mathbb{Z}} \left(\dfrac{8}{n}\right)\chi_0^{(3)}(n) q^{n^2/24}$ 
& $\left(\dfrac{2}{\ell}\right)$ \\
\bottomrule
\end{tabular}
\vspace{1.5ex}
\parbox{12cm}{\small \textit{Note:} $\chi_0^{(N)}(n)$ denotes the principal Dirichlet character modulo $N$; \\
$\left(\frac{a}{n}\right)$ denotes the Kronecker symbol.}
\end{table}
\clearpage

\providecommand{\bysame}{\leavevmode\hbox to3em{\hrulefill}\thinspace}
\providecommand{\MR}{\relax\ifhmode\unskip\space\fi MR }
\providecommand{\MRhref}[2]{%
  \href{http://www.ams.org/mathscinet-getitem?mr=#1}{#2}
}
\providecommand{\href}[2]{#2}

\end{document}